  \definecolor{darkred}{RGB}{139,0,0}
  \definecolor{mediumblue}{RGB}{0,0,205}
  \definecolor{forestgreen}{RGB}{34,139,34}
\preto\subequations{\ifhmode\unskip\fi}
\theoremstyle{plain}\newtheorem{thm}{Theorem}[section]
\newtheorem{lem}[thm]{Lemma}
\newtheorem{prop}[thm]{Proposition}
\newtheorem{cor}[thm]{Corollary}
\theoremstyle{definition}
\theoremstyle{remark}
\newtheorem{rmk}{Remark}
  \setlist[enumerate]{nosep, topsep=0pt, wide = 1em, leftmargin=*}
\newcommand{\Oext}{{{\Omega}^\text{ext}}}
\newcommand{\OTt}{{\Omega_h^{\widetilde{\mathcal{T}}}}}
\newcommand{\Oh}{{\Omega_h}}
\newcommand{\Gh}{{\Gamma_h}}
\newcommand{\OT}{{\Omega_h^\mathcal{T}}}
\newcommand{\OGt}{{\widetilde\Omega_h^\Gamma}}
\newcommand{\OG}{\Omega_h^{\Gamma}}
\newcommand{\ThT}{\widetilde{\mathcal{T}}^{\rm ext}_h}
\newcommand{\Thm}{{\widetilde{\mathcal{T}}_{h}}}
\newcommand{\Thmgp}{\widetilde{\mathcal{T}}_{h}^{\text{gp}}}
\newcommand{\ThmG}{\widetilde{\mathcal{T}}_{h}^{\Gamma}}
\newcommand{\Th}{\widetilde{\mathcal{T}}_{h}^\text{Al}}
\newcommand{\ThO}{\widetilde{\mathcal{T}}_{h}^{\text{Al},\Omega}}
\newcommand{\ThG}{\widetilde{\mathcal{T}}_{h}^{\text{Al},\Gamma}}
\newcommand{\Fh}{\widetilde{\mathcal{F}}_{h}^\text{Al}}
\newcommand{\FhG}{\widetilde{\mathcal{F}}_{h}^{\text{Al},\Gamma}}
\newcommand{\Fhgp}{\widetilde{\mathcal{F}}_{h}^{\text{Al},\text{gp}}}
\newcommand{\Tho}{{\mathcal{T}}_{h}^\text{Al}}
\newcommand{\ThoG}{{\mathcal{T}}_{h}^{\text{Al},\Gamma}}
\newcommand{\Thogp}{\mathcal{T}_{h}^{\text{Al},\text{gp}}}
\newcommand{\ThoO}{{\mathcal{T}}_{h}^{\text{Al},\Omega}}
\newcommand{\Fhogp}{\mathcal{F}_{h}^{\text{Al},\text{gp}}}
\newcommand{\ub}{\bm{u}}
\newcommand{\vb}{\bm{v}}
\newcommand{\xb}{\bm{x}}
\newcommand{\nb}{\bm{n}}
\newcommand{\nh}{\nb_h}
\newcommand{\fb}{\bm{f}}
\newcommand{\eb}{\bm{e}}
\newcommand{\uh}{\ub_h}
\newcommand{\vh}{\vb_h}
\newcommand{\ph}{p_h}
\newcommand{\qh}{q_h}
\newcommand{\pha}{p_h^\ast}
\newcommand{\qha}{q_h^\ast}
\newcommand{\lh}{\lambda_h}
\newcommand{\mh}{\mu_h}
\newcommand{\Lb}{\bm{L}}
\newcommand{\Vb}{\bm{V}}
\newcommand{\Vh}{\Vb_h}
\newcommand{\Qh}{Q_h}
\newcommand{\Sh}{\Sigma_h}
\DeclareMathOperator{\dist}{dist}
\DeclareMathOperator{\meas}{meas}
\newcommand{\jump}[1]{\llbracket#1\rrbracket}
\newcommand{\restr}[2]{{\left.\kern-\nulldelimiterspace#1\right|_{#2}}}
\newcommand{\nrm}[1]{\Vert #1 \Vert}
\newcommand{\tnrm}[1]{\mathopen{|\mkern-1.5mu|\mkern-1.5mu|}#1\mathclose{|\mkern-1.5mu|\mkern-1.5mu|}}
\newcommand{\tnrma}[1]{\tnrm{#1}_\ast}
\newcommand{\RR}{\mathbb{R}}
\newcommand{\PP}{\mathbb{P}}
\newcommand{\bV}{{\bm V}}
\newcommand{\bH}{{\bm H}}
\newcommand{\bv}{{\bm v}}
\newcommand{\nab}{\nabla}
\newcommand{\calT}{\mathcal{T}}
\newcommand{\bPhi}{{\bm \Phi}}
\newcommand{\bL}{{\bm L}}
\newcommand{\bbP}{\mathbb{P}}
\newcommand{\bn}{{\bm n}}
\newcommand{\p}{\partial}
\newcommand{\bbR}{\mathbb{R}}
\newcommand{\ba}{{\bm a}}
\newcommand{\bb}{{\bm b}}
\newcommand{\bp}{{\bm p}}
\newcommand{\calF}{\mathcal{F}}
\newcommand{\bw}{{\bm w}}
\newcommand{\bu}{{\bm u}}
\newcommand{\bX}{{\bm X}}
\newcommand{\bPsi}{{\bm \Psi}}
\title[Unfitted Scott-Vogelius Elements]{An unfitted divergence-free higher order finite element method for the Stokes problem}
\date{\today}
\author[M.~Neilan]{Michael Neilan}
\address{Department of Mathematics, University of Pittsburgh, Pittsburgh, PA 15260 USA}
\email{neilan@pitt.edu}
\author[M.~Olshanskii]{Maxim Olshanskii}
\address{Department of Mathematics, University of Houston, TX, USA}
\email{maolshanskiy@uh.edu}
\author[H.~v.~Wahl]{Henry von Wahl}
\address{Institute for Mathematics, Friedrich Schiller University Jena, Germany}
\email{henry.von.wahl@uni-jena.de}
\thanks{M.N.~was supported in part by the NSF, grant DMS-2309425. M.O.~was supported in part by the NSF, grant DMS-2408978.}
\begin{document}
\begin{abstract}
The paper develops and analyzes a higher-order unfitted finite element method for the incompressible Stokes equations, which yields a strongly divergence-free velocity field up to the physical boundary. The method combines an isoparametric Scott--Vogelius velocity–pressure pair on a cut background mesh with a stabilized Nitsche/Lagrange multiplier formulation for imposing Dirichlet boundary conditions. We construct finite element spaces that admit robust numerical implementation using standard elementwise polynomial mappings and produce exactly divergence-free discrete velocities. The key components of the analysis are a new inf-sup stability result for the isoparametric Scott--Vogelius pair on unfitted meshes and a combined inf-sup stability result for the bilinear forms associated with the pressure and the Lagrange multiplier. The finite element formulation employs a higher-order Lagrange multiplier space, which ensures stability and mitigates the loss of pressure robustness typically associated with the weak enforcement of boundary conditions for the normal velocity component.

The paper provides a complete stability and convergence theory in two dimensions, accounting for the geometric errors introduced by the isoparametric approximation. The analysis shows optimal-order velocity convergence in both the $H^1$ and $L^2$ norms and establishes optimal $H^1$-convergence and nearly optimal $L^2$-convergence of a post-processed pressure. Numerical experiments illustrate and confirm the theoretical findings. 
\end{abstract}

\maketitle

\section{Introduction}
\label{sec:intro}
Geometrically unfitted finite element methods have become widely accepted as a strong alternative to classical approaches based on body-fitted meshes, particularly 
when the partial differential equations are posed on
complex or deforming domains. In this paper, we focus on a stabilized Nitsche-type unfitted FEM, also known as Nitsche--XFEM or CutFEM~\cite{burman2012fictitious,BCH+14}. The method was originally introduced for elliptic interface problems but has since rapidly evolved and been extended in various directions, including parabolic and hyperbolic interface problems and PDEs posed on time-dependent domains; see the recent review~\cite{burman2025cut}.

Among other applications, the development of CutFEM formulations for viscous incompressible fluid flow has been the subject of numerous studies (see, e.g.,~\cite{BH14,MLLR14,hansbo2014cut,kirchhart2016analysis,guzman2018inf,BMV18,massing2018stabilized,BFM22,vWRL20,olshanskii2025stability}). However, the unfitted finite element methods proposed in these works generally fail to produce a strongly solenoidal velocity field---an important property for ensuring that the discrete solution satisfies the correct physical balance laws. There are at least two reasons for this limitation. First, most popular finite element spaces are originally designed to enforce the incompressibility constraint only weakly. Second, the ghost-penalty terms introduced in CutFEM to guarantee algebraic stability typically perturb the divergence constraint.

For the reasons outlined above, the development of \textit{unfitted divergence-free} finite elements has only recently been addressed in several studies in the context of the Darcy and Stokes equations. The Darcy interface problem was addressed in \cite{FHNZ24} using $\mbox{RT}_k\times Q_k$ pairs together with a special pressure stabilization that guarantees pointwise divergence-free approximations and optimal convergence. The subsequent work \cite{frachon2024stabilized} extended these ideas to the Darcy problem with unfitted outer boundaries by employing a stabilized Lagrange multiplier formulation for essential boundary conditions, although only the lowest-order case was analyzed. A closely related method which also included a post-processing step for the scalar variable was introduced and analyzed in~\cite{LvBV23}.  All these  studies assumed exact integration over cut elements.

For the Stokes problem, \cite{LNO23} proposed a CutFEM based on the Scott--Vogelius pair, ensuring both finite element and algebraic stability through ghost-penalty terms for velocity and pressure, and achieving a discrete velocity field that is divergence-free outside an $O(h)$ neighborhood of the boundary. The study \cite{FNZ23} introduced two H(div)-conforming CutFEM formulations employing Brezzi--Douglas--Marini and Raviart--Thomas spaces, which produce pointwise divergence-free velocities and well-conditioned systems independent of the boundary position, though without providing a theoretical error analysis and assuming exact numerical integration over cut elements. The authors of \cite{BHL24} presented a low-order divergence-free CutFEM for the Stokes problem, employing two variants of a stabilized Lagrange multiplier method to impose Dirichlet conditions. The paper established optimal error estimates independent of cut geometry; however, the study in \cite{BHL24} was limited to low-order discretizations and assumed exact integration over cut elements.

Thus, a provably stable and higher-order accurate CutFEM for the Stokes problem that yields a strongly divergence-free solution up to the boundary remains an open challenge. The present study addresses this challenge with {a two-dimensional} isoparametric 
Scott--Vogelius unfitted finite element method, in which the
the discrete velocity space consists of (mapped) piecewise polynomials of degree $k$, and the discrete pressure space consists of (mapped) piecewise polynomials of degree $k-1$.

The method introduced here benefits from several recent advancements in handling immersed and curvilinear interfaces within a finite element framework, namely: a Nitsche/Lagrange multiplier formulation from \cite{BHL24}, an isoparametric approximation technique for level set domains \cite{Leh16}, and a modification of the isoparametric Scott–Vogelius element from \cite{NO21,DN24}. 
The end result is a higher-order, stable, and optimally convergent
scheme with an exactly divergence-free velocity approximation in 
both the physical and computational domain.

While these ingredients form the basis of the construction of the proposed discretization, the precise definition of our finite element spaces as well as the stability and convergence analysis requires new techniques beyond those
found in the literature. First, our definition of the proposed Scott-Vogelius
finite element space fundamentally differs from the approach given in \cite{NO21,DN24}.  The definition of the Scott-Vogelius finite element spaces given
in those works are developed within a macro-element framework, with mapped reference basis functions defined on local Alfeld partitions.
Although this viewpoint is advantageous for the stability analysis, it results in a technically involved and cumbersome implementation. 
Rather, we define the Scott-Vogelius space with respect 
to an isoparametric split mesh, mapped via standard polynomial
basis functions.  As a result, the proposed spaces can be implemented
using existing finite element software packages such as NGSolve \cite{Sch14}.
On the other hand, the stability proof of the proposed velocity-pressure finite element pair
is considerably more involved and one of the focuses of this paper.

Second, we do not assume exact integration over cut elements and incorporate
geometric error in the scheme and analysis.
Following the approach of \cite{Leh16}, we approximate the domain by a continuous, piecewise-polynomial diffeomorphism, which yields an approximate geometry
with $O(h^{k+1})$ geometric error. 
This geometric perturbation enters the analysis in a nontrivial way, particularly in establishing the inf–sup stability of the Scott–Vogelius pair,
as well the derivation of optimal-order error estimates.

Third, relative to \cite{BHL24}, we utilize a potentially higher-order Lagrange multiplier space for the pressure trace and use continuous, piecewise polynomials.
We show that this choice, together with the Scott-Vogelius pair, yields 
a stable method. Moreover, the higher-order Lagrange multiplier space
mitigates the lack of pressure robustness inherent in  schemes with weakly imposed boundary conditions \cite{JLMNR17,BHL24,FNZ23}.

{An alternative to using isoparametric finite elements in the CutFEM setting is to assume sufficiently accurate integration over implicitly defined cut portions of simplices and surfaces, using one of the dedicated techniques discussed, for instance, in \cite{saye2015high,SAYE2022110720,muller2013highly,garhuom2022non,olshanskii2016numerical}. Such techniques enable direct integration over cut elements, although each has its own limitations. If one employs one of these approaches and guarantees sufficiently accurate integration, possibly at the price of greater implementation effort and computational cost, then we expect that the main error estimates of the present paper can be extended to the corresponding variant. However, some details of the analysis would still need to be supplied.

Another variant of the method, not considered here, employs a Lagrange multiplier to enforce both the tangential and normal boundary conditions for the velocity; see \cite{BHL24} for the corresponding lower-order case.
}

\subsection{Outline of the paper}
The material in the paper is organized as follows. Section~\ref{sec:problem} formulates the Stokes problem and several integral identities satisfied by the solution, which we exploit to construct the finite element method. Section~\ref{sec:Prelim} introduces simplicial meshes, the necessary geometric preliminaries, and the finite element spaces. Section~\ref{sec:FE} presents the finite element formulation, consisting of the main scheme for approximating the fluid velocity, pressure {boundary} trace, and an auxiliary pressure, complemented by a post-processing step that recovers the physical pressure. We show that the finite element velocity is pointwise divergence-free.

Section~\ref{sec:SAnalysis} contains the stability analysis of the finite element method. It begins with the key inf–sup pressure stability result for the (isoparametric) Scott–Vogelius element, whose proof is provided in Appendix~\ref{sec:A}. The section then establishes the stability of the discrete Lagrange multiplier and, subsequently, the stability of the full bilinear form. Extending stability from the pressure variable to the entire formulation is nontrivial and requires careful scaling arguments. The error analysis is completed in Section~\ref{sec:EAnalysis}. The main theorem, Theorem~\ref{thm:Main}, proves convergence of the finite element solutions to specific projections of the physical quantities onto the discrete spaces, from which the optimal $H^1$ error estimate for the velocity follows under sufficient regularity of the data. For the post-processed finite element pressure, Theorem~\ref{Th:pressure} establishes optimal convergence in the $H^1$ norm and half-order suboptimal convergence in the $L^2$ norm.

Finally, Section~\ref{sec:Numerics} illustrates the analysis and assesses the performance of the isoparametric Scott–Vogelius element on a series of numerical examples. The results confirm the optimal order of accuracy for the finite element velocity and also suggest that the post-processed pressure is optimally accurate in the $L^2$ norm.

\section{The Stokes Problem}\label{sec:problem}
Let $\Omega$ be an open, bounded domain in $\RR^d$, $d\in\{2,3\}$, with a   smooth  boundary $\Gamma=\partial\Omega$. The Stokes system  will serve here as a model problem: Find  a velocity field $\ub$ and pressure $p$, such that 
\begin{equation}\label{Stokes}
\left\{
\begin{aligned}
  - \Delta \ub + \nabla p &= \fb &&\text{in }\Omega\\
 \nabla\cdot\ub &= 0 &&\text{in }\Omega\\
   \ub &= 0 &&\text{on }\Gamma
\end{aligned}\right.
\end{equation}
holds. To avoid a non-unique pressure, we add $\int_\Omega p\dif \xb=0$.
Assume that the ``physical'' domain $\Omega$ is embedded in an ambient polytopal ``computational'' domain  $\Oext$. 

For the purpose of convergence analysis, we assume that $\ub\in {\bH^m(\Omega):=}H^m(\Omega)^d$, $p\in H^{m-1}(\Omega)$ for a suitable positive exponent $m$. 
Let $d=3$: Since $\ub$ vanishes on $\Gamma$ and is solenoidal, there exists a vector potential $\bv\in \bH^{m+1}(\Omega)$ such that $\ub=\nabla\times\bv$. One can extend $\bv$ from $\Omega$ to $\Oext$ in $\bH^{m+1}$~\cite{stein1970singular}, which provides us with a  $\bH^m$-continuous solenoidal extension $\bu^e$ defined on  $\Oext$. Similar argument works for velocity in  $d=2$. For the pressure function we shall consider the extension by \emph{zero} outside $\overline{\Omega}$, which we denote by $p^z$.

Let us introduce the trace of the pressure on the boundary $\Gamma$ as auxiliary variable {$\lambda = \restr{p}{\Gamma}$}.
It is easy to see that the extended solution satisfy the integral identity 
\begin{multline} \label{intIdent1}
  \int_\Omega\nabla\ub^e :\nabla\vb\dif\xb
  -\int_\Gamma ((\nb\cdot\nabla)\ub^e)\cdot\vb \dif s
  {-\int_\Gamma ((\nb\cdot\nabla)\vb)\cdot\ub^e \dif s}
  + \gamma\int_\Gamma \ub^e\cdot\vb\dif s\\
  -\int_{\Oext} p^z\nabla\cdot\vb\dif\xb  -\int_\Oext q\nabla\cdot\ub^e\dif\xb
  + \int_\Gamma \lambda (\vb\cdot\nb) \dif s
+ \int_\Gamma \mu (\ub^e\cdot \nb) \dif s = \int_\Omega\fb\cdot\vb\dif\xb
\end{multline}
for arbitrary and sufficiently regular test functions $\vb,q,\mu$ 
{defined on their corresponding domains, and for any real number $\gamma$.}
{Here, $\bn$ denotes the outward unit normal of $\Gamma$.}

Multiplying  the momentum equation  in \eqref{Stokes} by the gradient of $q$, it  also follows that the pressure solution satisfies the  identity
\begin{equation*}\int_\Omega\nabla p \cdot\nabla q\dif\xb = \int_\Omega(\Delta \ub)\cdot\nabla q\dif\xb + \int_\Omega\fb\cdot\nabla q\dif\xb.
\end{equation*} 
For vectors $\ba,\bb\in \bbR^d$, 
denote by $\ba\times \bb$ the cross product
in the case $d=3$, and $\ba\times \bb = \ba\cdot \bb^\perp = -\ba^\perp\cdot \bb$
in the case $d=2$, where $\ba^\perp = (-a_2,a_1)^T$.
Employing the identity for the vector Laplacian $\Delta(\cdot)=\nabla\nabla\cdot (\cdot) -\nabla\times\nabla\times (\cdot)$ followed by an integration by parts, we reformulate the pressure problem as 
\begin{equation}\label{intIdent2}
    \int_\Omega\nabla p \cdot\nabla q\dif\xb = -\int_\Gamma \bn\times(\nabla\times\ub)\nabla q\dif\xb + \int_\Omega\fb\cdot\nabla q\dif\xb.
\end{equation} 
The integral equalities \eqref{intIdent1} and \eqref{intIdent2} form the basis of the finite element formulation and the pressure post-processing step studied in this paper.
\section{Preliminaries} \label{sec:Prelim}

\subsection{Computational meshes}
Let $\ThT$ be a shape regular {and quasi-uniform} simplicial
 mesh of the background domain $\Oext$ {with ${\rm diam}(\tilde T) \approx h$ for all $\tilde T\in \ThT$}. The \emph{active \underline{macro} mesh} is then the set of elements with a contribution to the physical domain
\begin{equation*}
  \Thm \coloneqq \{T\in\ThT\mid \meas_{d}(T\cap\Omega) > 0\}.
\end{equation*}
The active domain is defined as the union of all simplexes from $\Thm$: $\OTt \coloneqq \{\xb\in\RR^d \mid \exists T\in\Thm\text{ with }\xb\in T \}\subset \Oext$.  
The set of \emph{cut macro elements} is
\begin{equation*}
  \ThmG \coloneqq \{T\in\Thm \mid \meas_{d-1}(T\cap\Gamma) > 0 \},
\end{equation*}
and the cut simplices  together with their interior neighbors are collected in
\begin{equation*}
  \Thmgp = \{T\in\Thm \mid \exists\, T'\in\ThmG \text{ with }  \meas_{d-1}(T\cap T') >0 \}.
\end{equation*}

To define the unfitted Scott-Vogelius finite pair, we refine the active macro mesh $\widetilde \calT_h$ by connecting the vertices with the element's barycenter (also known as an Alfeld or Clough-Tocher split), which we denote as $\Th$ and call it the \emph{active mesh}. In general, $\Th$ may contain elements that are outside the physical domain $\Omega$. The set of split elements that contain a non-empty intersection with the physical domain are collected in $\ThO \coloneqq \{T\in\Th\mid \meas_{d}(T\cap\Omega) > 0\}$.

Let $\Fh$ be the set of {$(d-1)$-dimensional} facets of the mesh $\Th$.
To stabilize the method with respect to arbitrary cuts between the geometry and the active mesh,
we define the set of \emph{ghost-penalty facets} as
\begin{equation}\label{eqn.gp-facets}
  \Fhgp \coloneqq \left\{ F \in \Fh \middle\vert \begin{array}{l} F=K_1\cap K_2,\; K_1, K_2\in\Th, K_1\neq K_2\\ \exists T_1,T_2\in\Thmgp\text{ with }K_1\subset T_1, K_2\subset T_2 \end{array}\right\}.
\end{equation}
To stabilize the Lagrange multiplier, we also collect the set of cut elements and facets of the split mesh
\begin{equation*}
  \ThG \coloneqq \{ K\in\Th\mid \meas_{d-1}(K\cap\Gamma) > 0\},
  \qquad
  \FhG \coloneqq \{ F\in\Fh \mid F\cap\Gamma\neq\emptyset \},
\end{equation*}
and define the resulting domain as $\OGt\coloneqq \{\xb\in\RR^d\mid \exists T\in \ThG\text{ with }\xb\in T\}$. We illustrate these different sets of elements in Figure~\ref{fig.mesh-sketch}.

\begin{figure}
  \centering
  \includegraphics{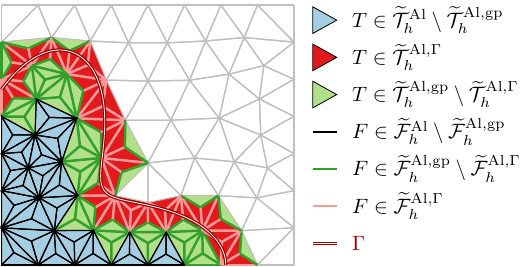}
  \caption{Sketch of element sets used in the unfitted Stokes discretization.}
  \label{fig.mesh-sketch}
\end{figure}

\subsection{Geometry Description}

We assume the physical domain is described by the subzero of a level set function $\phi:\RR^d\to\RR$, i.e., $\Omega = \{\xb\in\RR^d\mid \phi(\xb) < 0\},$ which is approximately a signed distance function.
{We shall use the same symbol $\bn$ to denote the normal on $\Gamma$ and its closest point extension to a neighborhood of $\Gamma$.} 

On the discrete level, we have a finite element level set function $\phi_h$ which is 
the continuous, piecewise linear interpolant of $\phi$. The corresponding polytopal domain
is then given by
\begin{equation*}
  \widetilde{\Omega}_h = \{\xb\in\RR^d\mid \phi_h(\xb) < 0\},\qquad \widetilde{\Gamma}_h = \partial \widetilde{\Omega}_h.
\end{equation*}
We use this piecewise $\PP^1$ level set approximation  for robust construction of quadrature rules on the unfitted domain, see, e.g., \cite{BCH+14}. Since $\phi_h$ is (only) piecewise $\PP^1$, the discrete domain approximates the physical domain with an error
$
  \dist(\Gamma,\widetilde{\Gamma}_h) \lesssim h^2.
$
If left unaddressed, this geometric error dominates the approximation error of higher--order methods, resulting in suboptimal convergence rates.

Let $\bPhi_h:\,\Omega_h^{\tilde \calT}\to\Omega_h^{\tilde \calT}$ be
the continuous, piecewise polynomial diffeomorphism 
of degree $k$ with respect to $\Th$
constructed in \cite{Leh16}, and set
\begin{equation*}
  \Oh := \bPhi_h(\widetilde{\Omega}_h) = \{\bPhi_h(\xb)\mid\xb\in \widetilde{\Omega}_h\}\quad\text{and}\quad
  \Gh := \bPhi_h(\widetilde{\Gamma}_h) = \{\bPhi_h(\xb)\mid\xb\in\widetilde{\Gamma}_h\}. 
\end{equation*}
The geometric error of approximating $\Omega$ by $\Omega_h$ is characterized by 
\begin{equation*}
\|\phi-\phi_h\circ\bPhi_h^{-1}\|_{W^{m,\infty}(\Omega_h^\Gamma)} \lesssim {h^{k+1-m}},\quad m=0,1.
\end{equation*}
In particular, this implies 
\begin{equation}\label{GeomError}
\dist(\Gamma,\Gh) \lesssim h^{k+1}\quad \text{and}\quad \|\bn-\bn_h\|_{L^\infty(\Gamma_h)}\lesssim h^k,
\end{equation}
where $\bn_h$ is the outward unit normal of $\Gamma_h$.
Since the mapping is piecewise polynomial with respect to $\Th$, one can apply standard quadrature formulas for (exact) numerical integration of finite element functions over $\Oh$ and $\Gh$.

\begin{rmk}
    Here and in what follows, we write $a\lesssim b$, iff there exists a constant $c>0$ which is independent of the mesh size and the geometry-mesh cut configuration, such that $a \leq c\,b$.
\end{rmk}
\subsection{Finite Element Spaces}

The Scott–Vogelius finite element pair $[\PP^k]^d \times \PP^{k-1}_\text{dc}$ is inf–sup stable on Alfeld-split affine meshes for $k \ge d$, and on affine meshes without nearly singular vertices for $k = 4$ and $d = 2$ \cite{Vog83, SV85, Qin94, Zha04}. Furthermore, on any affine (non-curvilinear) simplicial mesh, the divergence of the finite element velocity lies in the pressure space, yielding pointwise solenoidal discrete solutions of the finite element formulation. However, special care must be taken for curved meshes to preserve {these properties}. To this end, we consider a variation of the approach proposed in \cite{NO21, DN24}, which constructs
Scott-Vogelius pairs on isoparametric Alfeld splits. Similar
to the affine case and the references \cite{NO21, DN24},
we work on Alfeld splits and take the polynomial degree of the velocity space to be at least the spatial dimension, i.e., $k\ge d$.

Let $\Tho$ be the Alfeld split mesh curved by the mapping $\bPhi_h$, i.e.,
\[
\Tho := \bPhi_h(\Th)\coloneqq\{\bPhi_h(K) \mid K\in \Th\}.
\]
Likewise, we set $\ThoG = \bPhi_h(\ThG)$, $\ThoO=\bPhi_h(\ThO)$, $\Fhogp = \bPhi_h(\Fhgp)$,
$\Omega_h^\Gamma = \bPhi_h(\widetilde \Omega_h^\Gamma)$,
and 
\[
\OT = \{{\bm x}\in \mathbb{R}^d\mid \exists K\in \Tho\text{ with } {\bm x}\in K\}.
\]

We define the velocity space, for $k\ge d$, as follows: 
\begin{equation*}
  \Vh \coloneqq\left\{\vh\in\Lb^2(\OT)  \middle\vert \begin{array}{c} \restr{\vh}{ K} = \frac{1}{J_K}F_K\hat{\vb}_h\circ\varphi_K^{-1}\;\forall K\in\Tho,\;\hat{\vb}_h\in [\PP^{k}(\hat{T})]^d\\[1ex]
  \vh\text{ is single valued at Lagrange nodes of $\calT_h^{\rm Al}$}\end{array}\right\}.
\end{equation*}
Here, $\hat{T}$ is the reference element with unit length, $F_K = D\varphi_K$, $J_K=\det(F_K)$, where $\varphi_K: \hat{T}\rightarrow K$ is the mapping from the reference element to the physical element,
given by $\varphi_K = \bPhi_h \circ \varphi_{\tilde K}$, and $\varphi_{\tilde K}:\hat T\to \tilde K$
is affine, $K = \bPhi_h(\tilde K)$ with $\tilde K\in \Th$. 
If $\{\tilde {\bm a}\}$ denote the Lagrange nodes of the affine Alfeld mesh $\Th$,
then the Lagrange nodes of $\calT_h^{\rm Al}$ are defined as $\{\bPhi_h(\tilde {\bm a})\}$.
We take $\Vh^0$ to be the subspace of $\Vh$ consisting of functions vanishing on $\partial\OT$.

This construction differs to that in \cite{NO21,DN24}, as the latter considered the Piola mapping based on the \emph{macro} element. In contrast, we apply the deformation element-wise allowing an easier ``standard'' implementation via the reference element. 
Note that, due to the normal-preserving properties of the Piola transform,
there holds  $\Vh\subset {\bm H}({\rm div};\OT)$; see~\cite[Theorem 4.2]{NO21}.
Moreover, the mapping $\bPhi_h$ reduces to the identity operator
on all $K\in \calT_h^{\rm Al}$ such that $\bar K\cap \Omega_h^{\Gamma} = \emptyset$. Consequently,
functions in $\bV_h$ are continuous, piecewise polynomials on the union of such elements. 

For the pressure and Lagrange multiplier for the pressure trace, we consider the spaces
\begin{align*}
\Qh &= \{q_h\in L^2(\OT)\vert\ q_h|_K\circ \varphi_K\in \PP^{k-1}(\hat T)\ \forall K\in \Tho\},\\
\Sh &= \{\lambda_h\in C(\OG)\vert\ \lambda_h|_K\circ \varphi_K\in \PP^{k_\lambda}(\hat T)\ \forall K\in \ThoG\},\quad k_\lambda\in\{k-1,k\},
\end{align*}
and define $Q_h^0 = Q_h\cap L^2_0(\Omega_h^{\calT})$, the subspace
of functions in $Q_h$ with vanishing mean.

The next result provides
an approximation result of the discrete velocity space $\bV_h$.
Its proof is given in the appendix.
\begin{lem}\label{lem:Interp}
There exists $I_V:{\bm C}(\Omega_h^{\calT})\to \bV_h$ such that
\begin{equation}\label{eqn:InterpEstimate2}
\begin{aligned}
\|\bv -I_V\bv\|_{H^s(K)}&\lesssim h^{k+1-s}\|\bv\|_{H^{k+1}(K)}\quad &&\forall \bv\in \bH^{k+1}(K)\ \forall K\in \mathcal{T}^{\rm Al}_h,\\ 
\int_{\p \Omega_h^{\calT}} I_V \bv\cdot \bn_h\dif s &= \int_{\p \Omega_h^{\calT}} \bv\cdot \bn_h\dif s\qquad &&\forall \bv\in \bH^2(\Omega_h^{\calT}).
\end{aligned}
\end{equation}
\end{lem}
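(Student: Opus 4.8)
The plan is to construct $I_V$ as a composition of a standard Scott--Zhang-type (or nodal, given the continuity hypothesis) interpolant onto the affine Scott--Vogelius space on $\Th$, followed by the element-wise Piola push-forward through $\bPhi_h$, and then to correct the boundary flux by a divergence-free (hence Piola-invariant) bubble. Let me make this precise. On the affine mesh $\Th$ I would first invoke the known interpolation operator for the affine Scott--Vogelius velocity space $[\PP^k]^d$ restricted to the Lagrange-node-continuity constraint; call it $\tilde I_V$, satisfying the standard estimate $\|\bw - \tilde I_V\bw\|_{H^s(\tilde K)}\lesssim h^{k+1-s}\|\bw\|_{H^{k+1}(\tilde K)}$ on each $\tilde K\in\Th$. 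Given $\bv\in\bm C(\Omega_h^{\calT})$, pull it back by the Piola transform associated to $\bPhi_h$ on each element, i.e.\ set $\bw := (J_{\bPhi_h}\,(D\bPhi_h)^{-1}\,\bv)\circ\bPhi_h$ on each $K$; because $\bPhi_h$ reduces to the identity away from $\Omega_h^\Gamma$ and is a $\PP^k$-diffeomorphism elsewhere, $\bw$ inherits the Sobolev regularity of $\bv$ with norm equivalence constants depending only on shape-regularity (here one uses the bounds $\|D\bPhi_h - I\|_{L^\infty}\lesssim h^k$, $\|D\bPhi_h\|_{L^\infty}\lesssim 1$, $\|(D\bPhi_h)^{-1}\|_{L^\infty}\lesssim 1$ from \cite{Leh16}). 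Then define the provisional interpolant $\tilde I_V^{\rm geo}\bv$ on $\Tho$ by pushing $\tilde I_V\bw$ forward via the element-wise Piola map; since $\tilde I_V\bw$ is single-valued at the Lagrange nodes of $\Th$, its push-forward is single-valued at the Lagrange nodes of $\Tho$ by construction, so $\tilde I_V^{\rm geo}\bv\in\bV_h$.

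The first estimate in \eqref{eqn:InterpEstimate2} then follows by a change of variables argument: on each $K=\bPhi_h(\tilde K)$ one transforms the $H^s(K)$ norm of $\bv - \tilde I_V^{\rm geo}\bv$ back to $\tilde K$, where it becomes (up to constants controlled by the uniform bounds on $D\bPhi_h$ and its derivatives up to order $s$) a weighted $H^s(\tilde K)$ norm of $\bw - \tilde I_V\bw$, to which the affine estimate applies; transforming the resulting $\|\bw\|_{H^{k+1}(\tilde K)}$ back to $\|\bv\|_{H^{k+1}(K)}$ closes the bound. One caveat I would watch carefully: the Piola map involves $1/J_K$ and $F_K=D\varphi_K$, so differentiating $k+1$ times brings in up to $k+1$ derivatives of $\bPhi_h$; since $\bPhi_h$ is only piecewise $\PP^k$, these high derivatives exist element-wise and are bounded by $h^{-j}$ times lower-order quantities in the usual inverse-estimate fashion, and the scaling works out because the diffeomorphism is an $O(h^k)$ perturbation of the identity --- this is the routine-but-delicate bookkeeping step.

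The second identity --- exactness of the total boundary flux --- does not hold for $\tilde I_V^{\rm geo}\bv$ in general, so I would correct it. Note $\int_{\p\Omega_h^\calT}(\bv - \tilde I_V^{\rm geo}\bv)\cdot\bn_h\,ds =: \delta$ is a single scalar. Since $\bV_h\subset\bm H(\diver;\OT)$ and the Piola transform preserves normal traces and the divergence-free property, it suffices to exhibit one fixed function $\bm z_h\in\bV_h$ with $\int_{\p\Omega_h^\calT}\bm z_h\cdot\bn_h\,ds = 1$ and then set $I_V\bv := \tilde I_V^{\rm geo}\bv + \delta\,\bm z_h$; such a $\bm z_h$ exists --- e.g.\ push forward through $\bPhi_h$ a fixed affine Scott--Vogelius field whose flux through $\p\Omega_h^\calT$ is nonzero, which one obtains from surjectivity of the divergence on the affine pair or simply by a direct construction of a suitable nodal field. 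One must only check that adding $\delta\,\bm z_h$ does not spoil the first estimate: this is immediate once one bounds $|\delta|\lesssim h^{-1/2}\|\bv - \tilde I_V^{\rm geo}\bv\|_{L^2(\p\Omega_h^\calT)}\cdot|\p\Omega_h^\calT|^{1/2}$ via a trace inequality (with the geometry-uniform constant), combined with the $s=0$ and $s=1$ cases of the already-proven estimate and the fact that $\|\bm z_h\|_{H^s}$ is an absolute constant; since $\|\bv-\tilde I_V^{\rm geo}\bv\|_{L^2(\p\Omega_h^\calT)}$ is already $O(h^{k+1/2})$ by a trace/scaling argument, the correction is a higher-order perturbation and all estimates survive.

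The main obstacle, as indicated, is the high-order change-of-variables bookkeeping for the Piola-transformed interpolant: keeping precise track of how the $1/J_K$ and $F_K$ factors interact with $k+1$ derivatives, and verifying that the $O(h^k)$-closeness of $\bPhi_h$ to the identity is exactly what makes the constants geometry-independent. The boundary-flux correction and the core interpolation estimate on the affine mesh are comparatively standard.
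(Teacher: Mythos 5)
Your construction of the base interpolant (Piola pull-back to the affine Alfeld mesh, interpolate, Piola push-forward) is essentially the route the paper takes, except that the paper simply cites \cite[Lemma 4.4]{DN24} for an operator $I_V^1$ already satisfying the local estimate; your change-of-variables bookkeeping is the content of that cited lemma. The genuine divergence — and the gap — is in how you enforce the flux identity.

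You correct the total flux defect $\delta=\int_{\p\Omega_h^{\calT}}(\bv-\tilde I_V^{\rm geo}\bv)\cdot\bn_h\dif s$ by adding $\delta\,\bm z_h$ for a single fixed $\bm z_h\in\bV_h$ of unit total flux. This matches the second line of \eqref{eqn:InterpEstimate2}, but it destroys the first line, which is stated (and needed) \emph{element-locally}: $\|\bv-I_V\bv\|_{H^s(K)}\lesssim h^{k+1-s}\|\bv\|_{H^{k+1}(K)}$ for each $K$. With your correction, on any element $K$ in the support of $\bm z_h$ the error picks up $|\delta|\,\|\bm z_h\|_{H^s(K)}$, where $\delta$ is determined by the behaviour of $\bv$ on the entire boundary $\p\Omega_h^{\calT}$; there is no way to bound this by $\|\bv\|_{H^{k+1}(K)}$ alone (take $\bv$ vanishing on $K$ but not globally). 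There is also a scaling issue if $\bm z_h$ is localized: a single facet bubble normalized to unit flux has $\|\nabla \bm z_h\|_{L^2(K_0)}\approx h^{-d/2}$, so even the $H^1$ magnitude of the correction on that element is $O(h^{k+1/2})\cdot h^{-d/2}$, which for $d=2$ is $O(h^{k-1/2})$ and already fails the required $O(h^k)$. The locality is not cosmetic: the paper uses the local form of the estimate in \eqref{aux635} of Lemma~\ref{lemma:inf-sup3}, where the interpolation error of $\mu_h\bn$ must be summed over cut elements $K\in\ThoG$ only.

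The paper's fix is to make the correction local and facet-wise: on the affine side it adds, for each boundary facet $\tilde F_K^i$ of each boundary element, a facet bubble $\alpha_i\,\tilde b_i^K\,\tilde\bn_{F_i}$ with $\alpha_i$ chosen so that the normal flux of the interpolant matches that of $\bv$ on \emph{each} boundary facet (the Piola transform preserves these facet fluxes under the push-forward). Each $\alpha_i$ is then controlled by the local error $\|\bv-\bv_h^1\|_{H^1(K)}$ on the adjacent element, namely $|\alpha_i|\lesssim h^{k+1-d/2}\|\bv\|_{H^{k+1}(K)}$, so the correction is a local perturbation and the element-local estimate survives. If you replace your global correction by this facet-wise one (or any correction whose coefficients are bounded by local norms of $\bv$), your argument closes.
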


\section{Finite element method}\label{sec:FE}
We define the following bilinear forms: 
\begin{align*}
  a_h(\uh, \vh) \coloneqq&
  \int_{\Oh} \!\! \nabla \uh :\nabla\vh\dif\xb
  -\!\int_{\Gh}\!\!(\nh\!\cdot\!\nabla \uh) \cdot\vh\dif s
  -\!\int_{\Gh}\!\!(\nh\!\cdot\!\nabla \vh) \cdot\uh\dif s
  +\frac{\gamma_n}{h}\!\int_{\Gh}\!\!\uh\cdot \vh \dif s, \\
b_h(\qh,\vh) =& -\int_{\OT} \qh \nabla\cdot\vh\dif\xb,\\
   c_h(\mh,\vh) =& \int_{\Gh} \mh\nh\cdot\vh\dif s,
\end{align*}
for $\uh,\vh\in\Vh$, $\qh\in\Qh$, and $\mh\in\Sh$. Because $\bV_h \not\subset \bH^1(\OT)$ in general, throughout the paper the gradients of functions in $\bV_h$ are understood to be taken piecewise with respect to the mesh $\calT_h^{\rm Al}$.
The same convention will be used for norms on $\bV_h$ as well, e.g.,
$\|\bv\|_{H^1(\Omega_h^{\calT})}^2 = \sum_{K\in \calT_h^{\rm Al}} \|\bv\|_{H^1(K)}^2$.

To stabilize the diffusion bilinear form with respect to arbitrary cuts between the geometry and the active mesh, we consider a ghost-penalty stabilization. Specifically, we consider the so-called \emph{direct version} of the ghost-penalty operator \cite{Pre18, LO19}. It was originally introduced for $H^1$-conforming finite elements on standard (affine) triangulations, and here we extend it to functions in $\bV_h$.

Let $\mathcal{E}^\mathcal{R}$
 be the canonical global extension operator of rational functions to $\mathbb{R}^d$.
 In particular, if $\mathcal{E}^{\mathbb{P}}$ is
 the canonical global extension operator of polynomials to $\mathbb{R}^d$,
 and if $r = p/q$ for polynomials $p$ and $q$, then
 $\mathcal{E}^{\mathcal{R}} r = \mathcal{E}^{\mathbb{P}}p/\mathcal{E}^{\mathbb{P}}q$.
For a facet $F= K_1\cap  K_2 \in \Fhogp$, $K_1\neq  K_2$, we define the \emph{facet patch} as $\omega_F =  K_1\cup  K_2$.
The volumetric patch jump $\jump{\cdot}_\omega$ of $\vh\in \bV_h$ is then defined via
\begin{equation}\label{patch_jump}
  \restr{\jump{\vh}_\omega}{K_i} = \restr{\vh}{K_i} - \mathcal{E}^\mathcal{R}(\vh\circ\bPhi_h|_{\tilde K_j})\circ\bPhi_h^{-1},\quad\text{for }i,j\in\{1,2\}\text{ and }i\neq j.
\end{equation}
The direct ghost-penalty operator is then defined as
\begin{equation}\label{stab_form}
  i_h(\uh, \vh) \coloneqq \gamma_\textup{gp}\sum_{F\in\Fhogp}\frac{1}{h^2}\int_{\omega_F}\jump{\uh}_\omega\cdot\jump{\vh}_\omega\dif \xb.
\end{equation}

In our analysis we need to extend the definition of $i_h(\ub,\vb)$, for $\ub,\vb\in L^2(\Omega)$. In this case, $i_h(\ub,\vb):=i_h(\ub_p,\vb_p)$,
where $\restr{\ub_p}{K}= \big(\tilde J_K^{-1}\Pi_{\tilde K}(\tilde J_K\ub\circ\bPhi_h)\big)\circ\bPhi_h^{-1}$, 
with $\tilde K = \bPhi_h^{-1}(K)$, 
$\tilde J_K=\det(D\restr{\bPhi_h}{\tilde K})$, and 
 $\Pi_{\tilde K}$ is $L^2(\tilde K)$-orthogonal projection into polynomials of degree $2k-1$.

To stabilize the Lagrange multiplier, we  also consider the ``normal volume'' ghost-penalty stabilization
\begin{equation}\label{stabLM}
 j_h(\lh,\mh) \coloneqq  {h \gamma_\lambda} \int_{\Omega^\Gamma_h}
 (\bn_h\cdot\nabla \lh) (\bn_h\cdot\nabla \mh)\dif \xb,
\end{equation}
{where  $\bn_h=\nabla(\phi_h\circ\bPhi_h^{-1})/|\nabla(\phi_h\circ\bPhi_h^{-1})|$ in $\Omega_h^\Gamma$ is the extension of the normal vector.}
We now have all ingredients to define the finite element method.

\begin{rmk}[Necessary stabilization facets]\label{rmk.gp-facets}
  The set $\Fhgp$ of ghost-penalty facets defined in \eqref{eqn.gp-facets} is sufficient for stability but not necessary. We can reduce this set by considering patches of elements. Here, we group cut elements and facet neighbors of cut elements into disjoint groups $\mathcal{T}_\omega$, such that each patch $\omega$ contains at least one uncut root element $T_\omega\in\Thm\setminus\ThmG$. For ghost-penalty stabilization, we can then consider the set of interior facets of the Alfeld split elements of the macro patches. This can be seen as a weak form of the aggregated finite element method \cite{BNV22}.
\end{rmk}

\subsection{Finite element formulation}
The finite element formulation builds on the integral identity \eqref{intIdent1}. This can be seen as using the Nitsche method to enforce the boundary condition for the velocity weakly and the Lagrange multiplier method for the no-penetration condition. The formulation reads  as follows: Find $(\uh, \ph, \lh)\in \Vh\times\Qh^0\times\Sh$ such that \begin{multline}\label{eqn.discr-prob}
A_h((\bu_h,p_h,\lambda_h),(\vh,\qh,\mu_h))
:=(a_h + i_h)(\uh,\vh) + b_h(\ph, \vh) + b_h(\qh, \uh) \\
   + c_h(\lh, \vh) + c_h(\mh, \uh) + j_h(\lh, \mh) = \fb_h(\vh)
\end{multline}
for all $(\vh, \qh, \mh)\in \Vh\times\Qh^0\times\Sh$. Here, 
$\fb_h(\vh)=(\fb,\vh)_{\Omega_h}$, where the source term
$\fb$ is smoothly extended to $\Omega$.

\begin{rmk}
    Our practical implementation of the zero mean constraint in the test and trial pressure space consists of adding $(\ph, s)_{\OT}+(\qh, r)_{\OT}$ {($s,r\in \mathbb{R}$)} to the 
finite element method bilinear form. 
\end{rmk} 
The finite element pressure $\ph$ solving \eqref{eqn.discr-prob} approximates the
extended pressure function $p^z$ which is zero outside $\overline \Omega$, and therefore
discontinuous across the physical boundary $\Gamma$. Since the mesh is not fitted to $\Gamma$,  this leads to a sub-optimal  --- $O(h^{\frac12})$, in general --- convergence order for $\ph$. We will see latter that, thanks to the strong divergence-free property of the method, this pressure error does not affect the optimal convergence order for the velocity. 

The following lemma states that \eqref{eqn.discr-prob}
yields exactly divergence-free approximations.

\begin{lem}\label{lem:DivFree}
If $\uh\in \bV_h$ satisfies \eqref{eqn.discr-prob}
then $\nab\cdot \uh = 0$ in $\Omega_h^{\calT}$.
\end{lem}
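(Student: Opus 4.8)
The strategy is the standard Stokes argument: show that the divergence of $\uh$ lies in the pressure test space, then test the momentum equation with a pressure that coincides with (a scaled version of) this divergence to conclude it vanishes. The key structural fact is that, by the Piola-type construction of $\Vh$ through element-wise mappings $\varphi_K = \bPhi_h\circ\varphi_{\tilde K}$, for each $K\in\Tho$ one has $\restr{\nab\cdot\vh}{K}\circ\varphi_K = J_K^{-1}\,\widehat{\diver}\,\hat{\vb}_h$ for some $\hat{\vb}_h\in[\PP^k(\hat T)]^d$, so that $(\nab\cdot\vh)|_K\circ\varphi_K = J_K^{-1}\cdot(\text{polynomial of degree} \le k-1)$. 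This is exactly the defining property of $\Qh$. Hence $\nab\cdot\uh \in \Qh$. (This is the curved-mesh analogue of the affine fact cited in the text, ``the divergence of the finite element velocity lies in the pressure space''; it follows from the normal-preserving property of the Piola transform, cf.\ \cite[Theorem 4.2]{NO21}.)

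\smallskip

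First I would set $q^\ast := \nab\cdot\uh \in \Qh$. This need not have zero mean on $\OT$, so I would correct it: let $\bar q = |\OT|^{-1}\int_{\OT} q^\ast\dif\xb$ and set $q_0 := q^\ast - \bar q \in \Qh^0$. Testing \eqref{eqn.discr-prob} with $(\vh,\qh,\mu_h) = (\bm 0, q_0, 0)$ kills $a_h$, $i_h$, the two $c_h$ terms, $j_h$, and $\fb_h$, leaving $b_h(q_0,\uh) = 0$, i.e.
\begin{equation*}
  -\int_{\OT} q_0\,\nab\cdot\uh\dif\xb = -\int_{\OT}(q^\ast - \bar q)\,q^\ast\dif\xb = -\|q^\ast\|_{L^2(\OT)}^2 + \bar q\int_{\OT} q^\ast\dif\xb = 0,
\end{equation*}
since $\int_{\OT} q^\ast\dif\xb = |\OT|\,\bar q$. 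Thus $\|q^\ast\|_{L^2(\OT)}^2 = |\OT|\,\bar q^{\,2}$.

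\smallskip

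It remains to show $\bar q = 0$, which forces $\|q^\ast\|_{L^2(\OT)} = 0$ and hence $\nab\cdot\uh = 0$ in $\OT \supset \Omega_h^{\calT}$. The mean $\bar q$ is, up to the factor $|\OT|^{-1}$, equal to $\int_{\OT}\nab\cdot\uh\dif\xb = \int_{\p\OT}\uh\cdot\bn_h\dif s$ by the divergence theorem applied piecewise on $\Tho$ (legitimate since $\uh\in\bm H(\diver;\OT)$, so there are no spurious interior facet contributions). Here is where I expect the only real subtlety: one must argue that this boundary flux vanishes. The cleanest route is to test \eqref{eqn.discr-prob} with a suitable triple that isolates $\int_{\p\OT}\uh\cdot\bn_h\dif s$ — for instance $(\bm 0, q_h, 0)$ with $q_h$ a nonzero constant; but constants are excluded from $\Qh^0$. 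The correct device is the author's implementation remark: the zero-mean constraint is enforced via the multiplier $s\in\RR$, so the Euler–Lagrange equation in $s$ reads $\int_{\OT} p_h\dif\xb = 0$ and the equation obtained by varying $r$ (equivalently, testing with a constant $q_h \equiv 1$ together with the compensating term) gives precisely $b_h(1,\uh) = -\int_{\OT}\nab\cdot\uh\dif\xb = 0$. Equivalently and more self-containedly: for any constant $c$, $b_h(c,\uh) = -c\int_{\p\OT}\uh\cdot\bn_h\dif s$, and choosing $\qh = q^\ast$ (not mean-corrected) as test function — permissible once we add back the $(\cdot, r)_{\OT}$ term of the implementation remark, or simply work with the full system including the scalar Lagrange multiplier — yields $0 = b_h(q^\ast,\uh) = -\|q^\ast\|_{L^2(\OT)}^2$ directly. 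I would present this last version: incorporate the mean-value Lagrange multiplier into the weak form as stated in the remark, test with $(\bm 0, q^\ast, 0, 0)$, and read off $\nab\cdot\uh = 0$ in $\OT$, hence in $\Omega_h^{\calT}$.
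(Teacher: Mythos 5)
Your argument hinges on the claim that $\nabla\cdot\uh\in\Qh$, which you deduce from the Piola identity $(\nabla\cdot\vh)|_K\circ\varphi_K = J_K^{-1}\,\hat\nabla\cdot\hat{\vb}_h$ being ``$J_K^{-1}$ times a polynomial of degree $\le k-1$, which is exactly the defining property of $\Qh$.'' That is a misreading of the definition: $\Qh$ requires $q_h|_K\circ\varphi_K$ \emph{itself} to lie in $\PP^{k-1}(\hat T)$, not $J_K^{-1}$ times such a polynomial. On a curved element $J_K=\det(D\varphi_K)$ is a non-constant polynomial of degree $d(k-1)$ (recall $\bPhi_h$ has degree $k\ge d\ge 2$), so $J_K^{-1}\hat\nabla\cdot\hat{\vb}_h$ is a genuine rational function and $\nabla\cdot\bV_h\not\subset\Qh$ --- a fact the paper states explicitly at the beginning of Section~6 (this is precisely why the weighted projection $\pi_Q$ in \eqref{eqn:piQ} is introduced). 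Consequently $q^\ast=\nabla\cdot\uh$ is not an admissible test function, with or without the mean correction, and regardless of whether one appends the scalar multiplier of the implementation remark; the identity $\|q^\ast\|_{L^2(\OT)}^2=|\OT|\,\bar q^{\,2}$ is never obtained, and the proof collapses at its first step.

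The repair is close in spirit to what you wrote but requires a weighted choice of test function: since $\int_K q_h\,\nabla\cdot\uh\dif\xb=\int_{\hat T}(q_h\circ\varphi_K)(\hat\nabla\cdot\hat{\ub}_h)\dif\hat{\xb}$ (the Jacobians cancel), the natural candidate is the function whose pullback equals $\hat\nabla\cdot\hat{\ub}_h$, i.e.\ $q_h|_K=(J_K\circ\varphi_K^{-1})\,\nabla\cdot\uh|_K$, which \emph{does} belong to $\Qh$. One must still subtract its mean to land in $\Qh^0$, which is why the paper's proof only concludes $\nabla\cdot\uh=\mathrm{const}$ at this stage (invoking the arguments of \cite{DN24}), and then kills the constant by taking $\mu_h\equiv 1\in\Sh$ in the Lagrange-multiplier equation, so that $c_h(1,\uh)=0$ together with the divergence theorem forces $\mathrm{const}=0$. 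Your alternative route to the constant via the implementation remark is also not available within the lemma's hypotheses, since that remark describes a modified algebraic system rather than the formulation \eqref{eqn.discr-prob} being analyzed.
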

\begin{proof}
Taking $\bv_h=0$, $\mu_h = 0$ in \eqref{eqn.discr-prob}
yields $b_h(q_h^0,\uh)=0$ 
for all $q_h^0\in Q_h^0$. Splitting $q_h\in Q_h$ into the zero mean component and a constant, $q_h=q_h^0+|\OT|^{-1}(q_h,1)_{\OT}$, we find $(q_h,\nabla\cdot\uh-\mbox{const})_{\OT}=0$, with const=$|\OT|^{-1}(\uh\cdot\bn_{\partial\OT},1)_{\partial\OT}$.  By repeating the arguments 
from \cite{DN24}, we obtain  $\nab\cdot \uh=$const in $\OT$. However, $c_h(\mu_h,\uh)=0$ for $\mu_h=1$ along with the divergence theorem implies that const$=0$.
\end{proof}

\subsection{Pressure post processing}
We benefit now  from having this optimal-order finite element velocity in recovering the higher-order pressure approximation through a simple post-processing. 
For this we also need the $H^1$-conforming pressure FE space:
\[
\Qh^* = \{q_h\in C(\OT)\vert\ q_h|_K\circ \varphi_K\in \PP^{k-1}(\hat T)\ \forall K\in \Tho\}.
\]

The finite element formulation for the post-processing step builds on \eqref{intIdent2}: Find $\pha\in \Qh^*$ such that $(\pha,1)_{\Oh}=0$ and
\begin{align}\label{Post-process}
  (\nabla\pha,\nabla\qha)_{\Oh} + i_h(\pha, \qha)
  =     {\fb_h(\nabla \qha)} +(\bn_h\times(\nabla\times\uh), \nabla \qha)_{\Gh}\quad \forall\,\qha\in \Qh^*.
\end{align}

With a slight abuse of notation, $i_h(\cdot,\cdot)$ is the direct ghost-penalty operator defined similar to \eqref{stab_form}, but  acting on functions in $Q_h^*$. Furthermore, with another abuse of notation, the ghost-penalty operator $i_h$ is only needed over facets $\{F\in\Fhgp \mid \exists K_1\in \ThoO, K_2\in\ThoG, K_1\neq K_2\text{ and } F=\partial K_1\cap \partial K_2 \}$.  
Note that the post-processing formulation  applied integration by parts to avoid the implementation of the Hessian matrix for the Piola-mapped elements.

\section{Stability Analysis} \label{sec:SAnalysis}

The following theorem is the crucial result for the stability of the finite element method.
\begin{thm}[Inf-sup Stability]\label{Th:fitted}
  For $h>0$ sufficient small, there exists $\beta>0$, independent of $h$, such that
  \begin{equation}\label{infsup1}
    \beta\nrm{q_h}_{\OT} \leq \sup_{\vh\in\Vh^0\setminus\{0\}} \frac{b_h(\qh, \vh)}{\nrm{\vh}_{H^1(\OT)}}\qquad\forall\qh\in\Qh^0.
\end{equation}
\end{thm}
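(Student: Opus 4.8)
The plan is to reduce the inf–sup statement on the curved, unfitted Alfeld mesh to the known inf–sup stability of the Scott–Vogelius pair on affine Alfeld meshes. The first step is to note that, by Lemma~\ref{lem:DivFree}'s underlying computation (or directly), $\nabla\cdot\Vh^0|_K = \PP^{k-1}(\hat T)\circ\varphi_K^{-1}$ by the Piola transform: for $\vh\in\Vh^0$ with $\restr{\vh}{K}=J_K^{-1}F_K\hat\vb_h\circ\varphi_K^{-1}$, one has $(\nabla\cdot\vh)\circ\varphi_K = J_K^{-1}\widehat{\nabla}\cdot\hat\vb_h \in J_K^{-1}\PP^{k-1}(\hat T)$. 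Since $J_K$ is itself a (rational, but with polynomial numerator on $\tilde K$) factor absorbed into the definition of $\Qh$ only up to the degree count, the key is that $b_h(\qh,\vh) = -\int_{\OT}\qh\nabla\cdot\vh\,d\xb = -\sum_K\int_{\hat T}(\qh\circ\varphi_K)(\widehat\nabla\cdot\hat\vb_h)\,d\hat\xb$, i.e., the $J_K$ Jacobian factors cancel exactly. This algebraic cancellation is the structural reason the Piola/$1/J_K$ scaling in the definition of $\Vh$ was chosen, and it lets us transfer the bilinear form to the reference configuration.

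Second, I would build a Fortin-type operator. Given $\qh\in\Qh^0$, apply the classical (affine) Scott–Vogelius inf–sup stability on the affine split mesh $\Th$ to the pulled-back pressure $\tilde q_h$ defined by $\tilde q_h|_{\tilde K} := (\qh\circ\varphi_K)\circ\varphi_{\tilde K}^{-1}$ (a genuine $\PP^{k-1}_{\rm dc}$ function on $\Th$, after dealing with its mean), obtaining $\tilde\vb_h\in[\PP^k]^d\cap\bH^1_0$ on $\Th$ with $-\int\tilde q_h\nabla\cdot\tilde\vb_h \gtrsim \|\tilde q_h\|^2$ and $\|\tilde\vb_h\|_{H^1}\lesssim\|\tilde q_h\|$. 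Then push $\tilde\vb_h$ forward via the element-wise Piola map to get $\vh\in\Vh^0$. By the cancellation above, $b_h(\qh,\vh)$ equals (up to the mean-value adjustment) the reference-space pairing, hence controls $\|\tilde q_h\|_{L^2(\OTt)}^2$. One then needs: (i) $\|\tilde q_h\|_{L^2(\OTt)} \approx \|\qh\|_{L^2(\OT)}$, and (ii) $\|\vh\|_{H^1(\OT)}\lesssim\|\tilde\vb_h\|_{H^1(\OTt)}$. Both follow from the bounds on $\bPhi_h$: $\|D\bPhi_h-I\|_{L^\infty}\lesssim h$ and $\|D^2\bPhi_h\|_{L^\infty}\lesssim 1$ (piecewise), so the Jacobians $\tilde J_K$, $F_K=D\varphi_K$ and their inverses are uniformly bounded and bounded below, with derivatives controlled by inverse estimates; these give equivalence of norms under pullback/pushforward with constants $1+O(h)$. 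The ``$h$ sufficiently small'' hypothesis is exactly what makes these perturbation factors harmless.

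Third, I must handle two subtleties. The mean-value issue: $\tilde q_h$ has vanishing integral over $\OTt$ in the reference-pairing sense but not necessarily in the $L^2(\OTt)$ sense with the curved weight; however $\qh\in\Qh^0$ means $\int_{\OT}\qh=0$, and one controls the discrepancy $|\int_{\OTt}\tilde q_h\,d\tilde\xb|\lesssim h\|\tilde q_h\|$ by writing $\int_{\OT}\qh = \int_{\OTt}\tilde q_h\,\tilde J\,d\tilde\xb$ and using $|\tilde J - 1|\lesssim h$, so the constant shift absorbed into $\tilde q_h$ is higher order and can be moved to the left side of the inf–sup inequality for $h$ small. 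The second subtlety: the velocity space $\Vh$ is defined by single-valuedness at Lagrange nodes of $\Tho$, which are the images under $\bPhi_h$ of the affine Lagrange nodes, so the pushforward of an $\bH^1_0$-conforming $\tilde\vb_h$ is consistent with the definition of $\Vh^0$ — continuity of the Piola-mapped object at the mapped nodes corresponds precisely to continuity of $\tilde\vb_h$ at the affine nodes, and the zero boundary condition on $\partial\OTt$ maps to $\partial\OT$.

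The main obstacle I anticipate is item (i)/(ii) made uniform in the cut configuration together with the mean-value bookkeeping: one must verify that the affine Scott–Vogelius inf–sup constant on $\OTt$ is itself independent of how $\Gamma$ cuts the mesh — but here it is, because $\OTt$ is a union of full (uncut) simplices forming a shape-regular mesh, so the classical result applies with a constant depending only on shape-regularity; the geometry only enters through $\bPhi_h$, which is a globally defined $O(h)$ perturbation of the identity. Hence no genuinely new stability mechanism is needed beyond (a) the exact Jacobian cancellation in $b_h$, (b) the classical affine result, and (c) perturbation estimates for $\bPhi_h$. I would expect the bulk of the written proof in the appendix to consist of carefully tracking the $1+O(h)$ constants through the pullback and confirming the mean-value adjustment is absorbable.
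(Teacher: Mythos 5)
There is a genuine gap at the heart of your construction: the element-wise Piola pushforward of an $\bH^1_0$-conforming affine Scott--Vogelius field $\tilde\vb_h$ does \emph{not} land in $\Vh^0$. Membership in $\Vh$ requires single-valuedness at the Lagrange nodes of $\Tho$, but at a node $a=\bPhi_h(\tilde a)$ lying on a facet $F=\partial K_1\cap\partial K_2$ the two one-sided values of your pushforward are obtained by applying the two distinct Piola factors $\tfrac{1}{J_{K_i}}F_{K_i}$; since $\bPhi_h$ is only $C^0$ across the facets of $\Th$ (it is not $C^1$ even inside a macro element, cf.\ Figure~\ref{fig:Mappings1}), $D\varphi_{K_1}$ and $D\varphi_{K_2}$ disagree at $\tilde a$, and only the normal components of the two resulting vectors are matched. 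Your claim that ``continuity of the Piola-mapped object at the mapped nodes corresponds precisely to continuity of $\tilde\vb_h$ at the affine nodes'' is therefore false, and the test function you feed into the supremum in \eqref{infsup1} is not admissible. The tangential mismatch is only $O(h)$, so the function is close to $\Vh^0$, but repairing this by re-interpolating into $\Vh^0$ destroys the exact Jacobian cancellation in $b_h$ on which your argument rests, and you are then forced to prove a quantitative perturbation estimate --- which is precisely the nontrivial content of the paper's appendix, not the routine $1+O(h)$ bookkeeping you anticipate.

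The paper's proof is organized around exactly this obstruction. It does not reduce to the classical affine result: it introduces an auxiliary pair $\breve{\bV}_h\times\breve{Q}_h$ built with the \emph{macro-element} deformation $\breve{\bPhi}_h$ (a single polynomial per macro element, hence smooth there), invokes the known isoparametric macro-element inf-sup stability of \cite{NO21,DN24} for that pair (Lemma~\ref{lem:BreveStable}), and then passes to $\bV_h$ by nodal matching together with the perturbation estimate $|\bv_h\circ\bPhi_h-\breve{\bv}_h\circ\breve{\bPhi}_h|_{H^m(\tilde T)}\lesssim h^{2-m}\|\breve{\bv}_h\|_{H^1(\breve T)}$ of Lemma~\ref{lem:vPert}; the resulting $O(h)$ loss in the divergence pairing is absorbed for $h$ small. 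Your remaining ingredients --- the cancellation of $J_K$ in $b_h$, the mean-value adjustment via $|\det D\bPhi_h-1|\lesssim h$, and the norm equivalences under $\bPhi_h$ --- are correct and do appear in the paper's argument, but they are the easy part; the missing idea is how to produce an admissible element of $\Vh^0$ while retaining quantitative control of $b_h(\qh,\cdot)$, and that is what Lemmas~\ref{lem:BreveStable}--\ref{lem:qPert} supply.
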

\begin{proof}
    See Appendix~\ref{sec:A}
\end{proof}

\subsection{Stability of the Unfitted Formulation}
{Recall that we use} the same symbol $\bn$ to denote the normal on $\Gamma$ and its closest point extension to a neighborhood of $\Gamma$. 
For the analysis, we consider the following mesh-dependent norms for the velocity, pressure and Lagrange multiplier:
\begin{align*}
  \tnrma{\vh}^2 &\coloneqq {\sigma}(\nrm{\nabla\vh}_{\OT}^2 + h^{-1}\nrm{\vh}_{\Gh}^2) +
  {\|(\nb\cdot\nabla)(\vh\cdot\bn)\|_{\Omega_h^\Gamma}^2}
  +\|\nab\cdot \vh\|_{\Omega_h^{\calT}}^{2},
\\
  \tnrma{\qh}^2 &\coloneqq \nrm{q_h}_{\OT}^2,\\
  \tnrma{\lh}^2 &\coloneqq {\sigma^{-1}}(h\nrm{\lh}_{\Gh}^2 + j_h(\lh,\lh)),
\end{align*}
where the arguments make it clear which norms are used. 
The scaling parameter $\sigma$ and the narrow-band term $\|(\nb\cdot\nabla)(\vh\cdot\bn)\|_{\Omega_h^\Gamma}^2$ are introduced for the purpose of analysis.  We assume $0<\sigma\le1$, sufficiently small but independent of any discretization parameters.
The product norm is then 
\begin{align*}
  \tnrma{(\vh, \qh, \lh)}^2 &\coloneqq \tnrma{\vh}^2 + \tnrma{\qh}^2  + \tnrma{\lh}^2. 
\end{align*}
We will make a frequent use of the  unfitted trace inequality
\begin{equation}\label{eqn.unfitted-trace}
  \nrm{\vb}_{\Gamma\cap K} \lesssim h^{-\frac 12}\nrm{\vb}_K + h^{\frac 12}\nrm{\nabla\vb}_{K}
\end{equation}
for all $\vb\in \bH^1(K)$, see \cite{HH02}. 
We also recall the narrow-band estimate~\cite{grande2018analysis}:
\begin{equation}\label{EstLam}
    \|\lh\|_{\Omega_h^\Gamma} \lesssim \sigma^{\frac12}\tnrma{\lh}\qquad \forall \lh\in \Sigma_h.
\end{equation}

\subsubsection{Stability of the Pressure}
\begin{lem}
  \label{lemma:inf-sup2}
  There exist $h_0,\beta_b^\ast >0$ independent of $h$ and $\sigma$, such that for all $h\leq h_0$, it holds
  \begin{equation}\label{infsup3ABC}
    \beta_b^\ast\tnrma{\qh} \leq \sup_{\vh\in\Vh\setminus\{0\}} \frac{b_h(\qh, \vh)}{\tnrma{\vh}}\qquad\forall\qh\in\Qh^0. \end{equation}
\end{lem}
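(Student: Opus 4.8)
The plan is to upgrade the ``fitted'' inf--sup estimate of Theorem~\ref{Th:fitted}, which controls $\nrm{\qh}_{\OT}$ through $b_h$ against the full broken $H^1(\OT)$-norm on $\Vh^0$, to the weaker mesh-dependent velocity norm $\tnrma{\cdot}$ on the larger space $\Vh\supset\Vh^0$. The key observation is that, restricted to $\Vh^0$, the norm $\tnrma{\cdot}$ is bounded by $\nrm{\cdot}_{H^1(\OT)}$ \emph{uniformly} in $\sigma\in(0,1]$, so the estimate should follow simply by inserting the test function furnished by Theorem~\ref{Th:fitted} and comparing norms; the smallness of $\sigma$ is not needed here (it enters only the later combined-stability results).

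Concretely: fix $\qh\in\Qh^0$, the case $\qh=0$ being trivial. By Theorem~\ref{Th:fitted} choose $\vh^\ast\in\Vh^0$ with $\nrm{\vh^\ast}_{H^1(\OT)}=1$ and $b_h(\qh,\vh^\ast)\ge\beta\nrm{\qh}_{\OT}$. Since $\Vh^0\subset\Vh$, it then suffices to prove $\tnrma{\vh^\ast}\lesssim1$ with a constant independent of $h$ and $\sigma$, whereupon $\sup_{\vh\in\Vh\setminus\{0\}}b_h(\qh,\vh)/\tnrma{\vh}\ge b_h(\qh,\vh^\ast)/\tnrma{\vh^\ast}\gtrsim\nrm{\qh}_{\OT}=\tnrma{\qh}$, which is \eqref{infsup3ABC}. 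Of the four contributions to $\tnrma{\vh^\ast}^2$, three are immediate: $\sigma\nrm{\nabla\vh^\ast}_{\OT}^2\le\nrm{\vh^\ast}_{H^1(\OT)}^2$ since $\sigma\le1$; $\|\nab\cdot\vh^\ast\|_{\OT}^2\le d\,\nrm{\nabla\vh^\ast}_{\OT}^2$ pointwise; and, expanding with the product rule and using that $\bn$ is the smooth closest-point extension of the normal on the $O(h)$-collar $\OG$ (so $\bn$ and $\nabla\bn$ are bounded there), $\|(\bn\cdot\nabla)(\vh^\ast\cdot\bn)\|_{\OG}^2\lesssim\nrm{\nabla\vh^\ast}_{\OG}^2+\nrm{\vh^\ast}_{\OG}^2\le\nrm{\vh^\ast}_{H^1(\OT)}^2$.

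The remaining term $\sigma h^{-1}\nrm{\vh^\ast}_{\Gh}^2$ is the heart of the matter. Applying the unfitted trace inequality \eqref{eqn.unfitted-trace} on each cut element and summing gives $\sigma h^{-1}\nrm{\vh^\ast}_{\Gh}^2\lesssim\sigma h^{-2}\nrm{\vh^\ast}_{\OG}^2+\sigma\nrm{\nabla\vh^\ast}_{\OG}^2$, and the second summand is $\le\nrm{\vh^\ast}_{H^1(\OT)}^2$; so the task reduces to absorbing $\sigma h^{-2}\nrm{\vh^\ast}_{\OG}^2$. For this I would exploit that $\vh^\ast$ vanishes on $\partial\OT$ together with the geometric fact that, for $h$ small, $\OG$ lies in an $O(h)$-neighbourhood of $\partial\OT$ --- which follows from the definitions of $\Thm$ and $\ThmG$ (just past a cut element the active mesh exits the active domain) and from $\bPhi_h$ being bi-Lipschitz with constants tending to $1$, so in particular $\dist(\Gh,\partial\OT)\lesssim h$. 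A discrete Friedrichs/Poincar\'e inequality on this boundary collar --- valid for the piecewise polynomials in $\Vh^0$, which are continuous at the Lagrange nodes, have normal traces continuous across facets (so $\Vh\subset\bH(\diver;\OT)$), and vanish on $\partial\OT$ --- then yields $\nrm{\vh^\ast}_{\OG}\lesssim h\,\nrm{\nabla\vh^\ast}_{\OT}$, whence $\sigma h^{-2}\nrm{\vh^\ast}_{\OG}^2\lesssim\sigma\nrm{\nabla\vh^\ast}_{\OT}^2\le1$. Summing the four bounds gives $\tnrma{\vh^\ast}\lesssim1$; taking $h_0$ small enough for the geometric estimate to hold completes the argument, and the resulting $\beta_b^\ast$ depends only on $\beta$ and the shape-regularity and geometry constants, not on $h$ or $\sigma$.

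I expect the boundary-collar Friedrichs inequality for the $\bH(\diver)$-conforming (but not $\bH^1$-conforming) space $\Vh^0$ --- together with the supporting geometric claim $\dist(\Gh,\partial\OT)\lesssim h$ --- to be the main obstacle, since the usual broken-Poincar\'e machinery must be adapted to functions whose tangential components may jump across facets. A convenient way to sidestep the discrete version is to open up the proof of Theorem~\ref{Th:fitted} in Appendix~\ref{sec:A}: there $\vh^\ast$ is constructed by (Fortin-type) interpolation of a vector potential $\bm w\in\bH^1_0(\OT)$ with $\nrm{\bm w}_{H^1(\OT)}\lesssim\nrm{\qh}_{\OT}$, and for such $\bm w$ the collar estimate $\nrm{\bm w}_{\OG}\lesssim h\,\nrm{\nabla\bm w}_{\OT}$ is just the classical one-dimensional Poincar\'e inequality in the normal direction; interpolation stability (cf.\ Lemma~\ref{lem:Interp}) then transfers it to $\vh^\ast$.
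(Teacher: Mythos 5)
Your overall strategy coincides with the paper's: insert the test function $\vh^\ast\in\Vh^0$ supplied by Theorem~\ref{Th:fitted}, and verify $\tnrma{\vh^\ast}\lesssim\|\vh^\ast\|_{H^1(\OT)}$ uniformly in $\sigma\le1$, the only non-trivial contribution being the Nitsche term $\sigma h^{-1}\|\vh^\ast\|_{\Gh}^2$. Your three ``immediate'' bounds and the reduction of the Nitsche term via \eqref{eqn.unfitted-trace} are exactly as in the paper. Where you diverge is in absorbing $h^{-1}\|\vh^\ast\|_{K}$ on cut elements: the paper does \emph{not} invoke a global collar Friedrichs inequality. It argues purely elementwise, using the finite-dimensional norm equivalence $\|\vh\|_K\lesssim h\|\nabla\vh\|_K+h^{d/2}|\vh(\bp)|$ for a vertex $\bp$ of $K\in\ThoG$, and then choosing $\bp$ on $\partial\OT$ (cut elements sit in macro elements with a face on $\partial\OT$ for $h\le h_0$), where $\vh^\ast$ vanishes. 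Since this never crosses a facet, the non-conformity of $\Vh$ --- the tangential jumps between Piola-mapped pieces that you correctly flag as the obstacle to a broken collar Poincar\'e --- simply never enters. Your route (a) works, but only if you localize it to the single layer of cut elements abutting $\partial\OT$, at which point it \emph{is} the paper's argument; a genuinely multi-layer marching version would founder on the tangential jumps, since only the normal trace of $\Vh$ is continuous.

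Your fallback (b) rests on a mischaracterization of Appendix~\ref{sec:A}: the test function of Theorem~\ref{Th:fitted} is not built by interpolating an $\bH^1_0$ vector potential $\bw$; it is obtained by a perturbation argument from the auxiliary macro-element space $\breve{\bV}_h^0$, whose inf-sup stability is cited from \cite{NO21,DN24}. So there is no continuous field behind $\vh^\ast$ through which to transfer a classical one-dimensional Poincar\'e inequality by interpolation stability. This is the one genuine gap in the proposal as written; the local vertex-based Friedrichs inequality closes it and makes the smallness of $\sigma$ (correctly, as you observe) irrelevant to this lemma.
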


\begin{proof} 
Thanks to the result in Theorem~\ref{Th:fitted} there is $\vh\in\Vh^0\subset \Vh$ such that 
\[
\beta \tnrma{\qh}^2 \leq b_h(\qh, \vh)\quad \text{and}\quad 
\|\vh\|_{H^1(\OT)} \le \tnrma{\qh}. 
\]
The Nitsche term in the definition of the norm $\tnrma{\vh}$ can be bounded  by using the 
 unfitted trace inequality \eqref{eqn.unfitted-trace}, inverse estimates, and local Friedrichs inequalities:  for all $K\in \ThoG$,
\begin{align*}
  \nrm{\vh}_{\Gh\cap K}
  \lesssim h^{-\frac 12}\nrm{\vh}_K + h^{\frac 12}\nrm{\nabla\vh}_{K}
  &\lesssim h^{-\frac 12}\nrm{\vh}_K 
  \lesssim h^{\frac 12} \nrm{\nabla\vh}_{K}+h^{\frac{d-1}{2}}|\vh(\bp)|.
\end{align*}
where $\bp$ is any vertex of $K$. Now, $K$ belongs to a macro-element with a face on $\partial\OT$. Therefore, there exists a vertex $\bp$ of $K$ such that  $\vh(\bp)=0$.
Hence, we obtain the bound $\nrm{\vh}_{\Gamma\cap K}^2\lesssim h \nrm{\nabla\vh}_{K}^2$.
Summing up over all $\forall~K\in \ThoG$ gives the  control over the Nitsche term in $\tnrma{\vh}$:
\begin{equation}\label{aux625}
   h^{-\frac12}\|\vh\|_{\Gamma_h} \lesssim \|\vh\|_{H^1(\OT)}.
\end{equation}
Using this and $\sigma\le1$, we obtain
\[
\tnrma{\vh}\lesssim \|\vh\|_{H^1(\OT)} \le \tnrma{\qh}.
\]
This proves \eqref{infsup3ABC}.
\end{proof}

\begin{lem}\label{lem:Fortin}
For every  $\bv\in \bH^2(\Omega^{\calT}_h)$, there exists $\pi_V \bv\in \bV_h$ 
with $b_h(q_h,\pi_V \bv)=b_h(q_h,\bv)$
for all $q_h\in Q_h^0$.
If $\bv\in \bH^{k+1}(\Omega^{\calT}_h)$, then there holds $\tnrma{\bv - \pi_V \bv}\lesssim h^k \|\bv\|_{H^{k+1}(\Omega_h^{\calT})}$. 
Finally, if $\bv$ is divergence-free,
then $\nab\cdot \pi_V \bv \equiv 0$.
\end{lem}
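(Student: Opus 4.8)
The plan is to build $\pi_V$ as a corrected interpolant, $\pi_V\bv:=I_V\bv+\bw_h$, where $I_V$ is the operator of Lemma~\ref{lem:Interp} and $\bw_h\in\bV_h^0$ is a divergence‑matching correction supplied by the inf–sup stability of Theorem~\ref{Th:fitted}. Indeed, Theorem~\ref{Th:fitted} asserts that $b_h$ is inf–sup stable on $\bV_h^0\times Q_h^0$, which (being finite‑dimensional) is equivalent to the operator $\bw_h\mapsto b_h(\cdot,\bw_h)$ from $\bV_h^0$ onto $(Q_h^0)'$ admitting a bounded linear right inverse with constant $\beta^{-1}$ in the $\|\cdot\|_{H^1(\Omega_h^\calT)}$‑norm. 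Applying this right inverse to the functional $q_h\mapsto b_h(q_h,\bv-I_V\bv)=-(q_h,\nabla\cdot(\bv-I_V\bv))_{\Omega_h^\calT}$ on $Q_h^0$ yields $\bw_h\in\bV_h^0$ with $b_h(q_h,\bw_h)=b_h(q_h,\bv-I_V\bv)$ for all $q_h\in Q_h^0$ and $\|\bw_h\|_{H^1(\Omega_h^\calT)}\lesssim\|\nabla\cdot(\bv-I_V\bv)\|_{\Omega_h^\calT}\le\|\nabla(\bv-I_V\bv)\|_{\Omega_h^\calT}$. By linearity of $b_h$ this immediately gives the Fortin identity $b_h(q_h,\pi_V\bv)=b_h(q_h,\bv)$ for all $q_h\in Q_h^0$, and $\pi_V$ is linear.

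For the approximation bound, split $\tnrma{\bv-\pi_V\bv}\le\tnrma{\bv-I_V\bv}+\tnrma{\bw_h}$. The terms of $\tnrma{\bv-I_V\bv}$ are estimated one at a time using Lemma~\ref{lem:Interp}: the volumetric gradient, the divergence, and the (smooth‑normal) narrow‑band contributions are bounded by $\|\nabla(\bv-I_V\bv)\|_{\Omega_h^\calT}\lesssim h^k\|\bv\|_{H^{k+1}(\Omega_h^\calT)}$, while the boundary term $h^{-1/2}\|\bv-I_V\bv\|_{\Gamma_h}$ is handled by the unfitted trace inequality \eqref{eqn.unfitted-trace}; since $\sigma\le1$ this gives $\tnrma{\bv-I_V\bv}\lesssim h^k\|\bv\|_{H^{k+1}(\Omega_h^\calT)}$. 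For the correction, because $\bw_h\in\bV_h^0$ vanishes on $\partial\Omega_h^\calT$ we can reuse the boundary estimate from the proof of Lemma~\ref{lemma:inf-sup2} (unfitted trace, inverse estimates, and a local Friedrichs inequality on macro‑elements touching $\partial\Omega_h^\calT$), obtaining $h^{-1/2}\|\bw_h\|_{\Gamma_h}\lesssim\|\bw_h\|_{H^1(\Omega_h^\calT)}$; the remaining terms of $\tnrma{\bw_h}$ are controlled by $\|\bw_h\|_{H^1(\Omega_h^\calT)}$ directly, so $\tnrma{\bw_h}\lesssim\|\bw_h\|_{H^1(\Omega_h^\calT)}\lesssim h^k\|\bv\|_{H^{k+1}(\Omega_h^\calT)}$ by the first paragraph.

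Finally, suppose $\nabla\cdot\bv=0$. The Fortin identity already gives $b_h(q_h,\pi_V\bv)=b_h(q_h,\bv)=0$ for all $q_h\in Q_h^0$, so only the constant mode remains. Using the divergence theorem (valid since $\bV_h,\bH^1(\Omega_h^\calT)\subset\bH(\diver;\Omega_h^\calT)$), the vanishing of $\bw_h$ on $\partial\Omega_h^\calT$, and the flux identity of Lemma~\ref{lem:Interp}, we get $b_h(1,\pi_V\bv)=-\int_{\partial\Omega_h^\calT}\pi_V\bv\cdot\bn_h=-\int_{\partial\Omega_h^\calT}\bv\cdot\bn_h=-\int_{\Omega_h^\calT}\nabla\cdot\bv=0$. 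Hence $b_h(q_h,\pi_V\bv)=0$ for every $q_h\in Q_h$. Since $\pi_V\bv\in\bV_h$, the Jacobian weight in the element‑wise Piola definition of $\bV_h$ cancels the Jacobian appearing in the $L^2(K)$ pairing, so testing the divergence against all of $Q_h$ is faithful; therefore $b_h(\cdot,\pi_V\bv)\equiv0$ on $Q_h$ forces $\nabla\cdot\hat{\vb}_h=0$ on each reference element, i.e. $\nabla\cdot\pi_V\bv\equiv0$ — the same structural fact underlying Lemma~\ref{lem:DivFree}.

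The main obstacle is this last step: the Fortin correction only matches the $Q_h^0$‑orthogonal projection of the divergence, and promoting this to the pointwise identity $\nabla\cdot\pi_V\bv\equiv0$ requires both the flux‑preservation built into $I_V$ (to pin down the mean value) and the faithful duality pairing between $\nabla\cdot\bV_h$ and $Q_h$; the remaining ingredients — the surjectivity consequence of Theorem~\ref{Th:fitted} and the term‑by‑term norm estimates — are routine.
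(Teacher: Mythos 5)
Your proposal is correct and follows essentially the same route as the paper: the corrected interpolant $\pi_V\bv=I_V\bv+\bw_h$ with $\bw_h\in\bV_h^0$ supplied by the inf--sup stability of Theorem~\ref{Th:fitted}, the triangle-inequality estimate $\tnrma{\bv-\pi_V\bv}\le\tnrma{\bv-I_V\bv}+\tnrma{\bw_h}$ combined with Lemma~\ref{lem:Interp}, and the divergence-free conclusion via the constant mode, the flux-preservation of $I_V$, and the Piola/Jacobian compatibility between $\nabla\cdot\bV_h$ and $Q_h$. The only cosmetic difference is that you phrase the correction as a bounded right inverse where the paper picks the unique $\bw_h$ in the gradient-orthogonal complement of the kernel, and you spell out the Jacobian-cancellation step that the paper delegates to Lemma~\ref{lem:DivFree}.
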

\begin{proof}
Set $\bX_h = \{\vh\in \Vh^0:\ b_h(\qh,\vh) = 0\ \forall \qh\in \Qh^0\}$
to be the kernel of $b_h(\cdot,\cdot)$ and set
$\bX_h^\perp = \{\bw_h\in \Vh^0:\ (\nab \bw_h,\nab \vh)_{\Omega_h^{\calT}} = 0\ \forall \vh\in \bX_h\}$.
Given $\bv\in \bH^2(\Omega_h^{\calT})$, let $I_V \bv\in \Vh$ be the interpolant in Lemma~\ref{lem:Interp}. 
Then by the inf-sup condition \eqref{Th:fitted}
there exists a (unique) $\bw_h\in \bX_h^\perp$ satisfying
$b_h(q_h,\bw_h) = b_h(q_h,\bv-I_V \bv)$ for all $q_h\in Q_h^0$
and $\tnrma{\bw_h}\lesssim \|\bw_h\|_{H^1(\Omega_h^{\calT})}\lesssim \frac1{\beta}\tnrma{\bv-I_V \bv}$.
Setting $\pi_V \bv = \bw_h+I_V \bv$,
we see that $b_h(q_h,\pi_V \bv)=b_h(q_h,\bv)$ 
for all $q_h\in Q_h^0$, and
\[
\tnrma{\bv - \pi_V \bv}\le \tnrma{\bv - I_V\bv}+\tnrma{\bw_h}
\lesssim \left(1+ \beta^{-1}\right)\tnrma{\bv-I_V\bv}\lesssim h^k \|\bv\|_{H^{k+1}(\Omega)}.
\]
Finally, if $\bv$ is divergence-free,
then  $b_h(q_h,\pi_V \bv) = 0\ \forall q_h\in Q_h^0$, implying
 that $\nab\cdot \pi_V \bv$ is constant on $\Omega_h^{\calT}$.
Because $\bw_h\in \bV_h^0$, we have $\pi_V \bv|_{\p\Omega_h^{\calT}} = (\bw_h+I_V \bv)|_{\p\Omega_h^{\calT}} = I_V \bv|_{\p\Omega_h^{\calT}}$. We then conclude $\nab\cdot \pi_V \bv\equiv 0$
by Lemma~\ref{lem:Interp} and the divergence theorem.
\end{proof}

\subsubsection{Stability of the Lagrange Multiplier}
\begin{lem}
  \label{lemma:inf-sup3}
  There exist $h_1,\beta^\ast_c >0$ independent of $h$ and $\sigma$, such that for all $h\leq h_1$, it holds 
  \begin{equation}\label{infsup3}
    \beta_c^\ast\tnrma{\mh} \leq \sup_{\vh\in\Vh\setminus\{0\}} \frac{c_h(\mh, \vh)}{\tnrma{\vh}} + \sigma^{-\frac12}j_h(\mh,\mh)^{\frac12}\qquad\forall\mh\in\Sh. \end{equation}
\end{lem}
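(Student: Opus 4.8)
The goal is to bound $\tnrma{\mu_h}^2 = \sigma^{-1}(h\|\mu_h\|_{\Gamma_h}^2 + j_h(\mu_h,\mu_h))$ by the supremum on the right together with the $j_h$-seminorm. The plan is to control the two pieces of $\tnrma{\mu_h}^2$ separately. The term $\sigma^{-1}j_h(\mu_h,\mu_h)$ is already present explicitly on the right-hand side (as $\sigma^{-1/2}j_h(\mu_h,\mu_h)^{1/2}$, whose square is exactly $\sigma^{-1}j_h(\mu_h,\mu_h)$), so the heart of the matter is to bound $\sigma^{-1}h\|\mu_h\|_{\Gamma_h}^2$. For this I would test $c_h$ against a carefully constructed velocity. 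The natural candidate is a discrete field $\vh\in\bV_h$ whose normal trace on $\Gamma_h$ reproduces (a rescaled version of) $\mu_h$, so that $c_h(\mu_h,\vh) = \int_{\Gamma_h}\mu_h\,\nh\cdot\vh\,\dif s \gtrsim \|\mu_h\|_{\Gamma_h}^2$. Concretely: let $E\mu_h$ be a polynomial extension of $\mu_h|_{\Gamma_h\cap K}$ to the cut macro-elements, multiply by the normal $\nh$ and by a smooth cutoff supported near $\Gamma_h$, then take a suitable interpolant/Piola-image in $\bV_h$. Because $\bV_h\subset \bm{H}(\mathrm{div})$ and the Piola transform preserves normal traces, the normal component on $\Gamma_h$ is controlled; the construction should be local to $\Omega_h^\Gamma$ so $\vh$ is supported only on cut elements and their ghost neighbors.

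The key estimates to assemble are: (i) a \emph{lower bound} $c_h(\mu_h,\vh)\gtrsim h\|\mu_h\|_{\Gamma_h}^2$ minus controllable error terms coming from the mismatch between $\mu_h$ on $\Gamma_h$ and its polynomial extension off $\Gamma_h$ — this mismatch is exactly where the normal-derivative stabilization $j_h$ enters, since $\|\mu_h - E\mu_h\|$ on a thin band is governed by $h^{1/2}\,(\text{$\bn_h\!\cdot\!\nabla$-seminorm})$, i.e. by $j_h(\mu_h,\mu_h)^{1/2}$ up to the right powers of $h$; (ii) an \emph{upper bound} $\tnrma{\vh}\lesssim h^{1/2}\|\mu_h\|_{\Gamma_h} + (\text{lower-order in }j_h)$, obtained from the unfitted trace inequality \eqref{eqn.unfitted-trace}, inverse estimates on cut elements, and the fact that $\vh$ is supported in a band of width $O(h)$, so that $\|\nabla\vh\|_{\Omega_h^{\calT}}$, the Nitsche boundary term $h^{-1/2}\|\vh\|_{\Gamma_h}$, the narrow-band term $\|(\bn\!\cdot\!\nabla)(\vh\cdot\bn)\|_{\Omega_h^\Gamma}$, and $\|\nabla\cdot\vh\|_{\Omega_h^{\calT}}$ are all controlled by $h^{-1/2}\|\vh\cdot\nh\|_{\Gamma_h}\lesssim h^{-1/2}\cdot h^{1/2}\|\mu_h\|_{\Gamma_h}$; the factor $\sigma\le 1$ in front of the first two contributions in $\tnrma{\vh}^2$ only helps. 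Dividing, $c_h(\mu_h,\vh)/\tnrma{\vh}\gtrsim h^{1/2}\|\mu_h\|_{\Gamma_h} - (\text{const})\,j_h(\mu_h,\mu_h)^{1/2}$, and multiplying through by $\sigma^{-1/2}$ and rearranging gives $\sigma^{-1/2}h^{1/2}\|\mu_h\|_{\Gamma_h}\lesssim \sup_{\vh}c_h(\mu_h,\vh)/\tnrma{\vh} + \sigma^{-1/2}j_h(\mu_h,\mu_h)^{1/2}$; combined with the trivial control of $\sigma^{-1/2}j_h(\mu_h,\mu_h)^{1/2}$ this yields \eqref{infsup3}.

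There are two technical points worth flagging. First, one must argue that the ghost-penalty facets $\Fhogp$ connecting cut elements genuinely reach into the interior, so that $j_h$ together with the trace term controls $\mu_h$ on the \emph{whole} strip $\Omega_h^\Gamma$, not just on the cut elements — this is the standard ``fat-intersection / ghost-penalty chain'' argument and should be quoted from the CutFEM literature (e.g. as in \cite{BHL24, grande2018analysis}), and is compatible with the narrow-band estimate \eqref{EstLam} already recorded. Second, the geometric perturbation: $\nh$ is only an $O(h^k)$ approximation of $\bn$ by \eqref{GeomError}, and $\Gamma_h$ is curved (piecewise polynomial of degree $k$); I would absorb these into the construction by working with $\nh$ throughout and using that the map $\bPhi_h$ is a piecewise-polynomial diffeomorphism with bounded distortion, so quadrature and trace inequalities on $\Gamma_h$ hold with constants independent of the cut position.

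I expect the \textbf{main obstacle} to be step (i): constructing the test velocity $\vh\in\bV_h$ whose normal trace matches $\mu_h$ on the curved cut boundary $\Gamma_h$ \emph{while keeping it in the Piola-mapped Scott–Vogelius space} and \emph{localized to the cut strip}, and then quantifying precisely the error between $\mu_h|_{\Gamma_h}$ and its volumetric polynomial extension in terms of $h^{1/2}j_h(\mu_h,\mu_h)^{1/2}$ — the bookkeeping of powers of $h$ and of $\sigma$ in that error term, and checking that the $c_h$ lower bound survives after subtracting it, is the delicate part. Everything else (the upper bound on $\tnrma{\vh}$, the ghost-penalty chaining, and absorbing the geometric error) is routine once the test function is in hand.
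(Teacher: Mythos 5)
Your overall strategy coincides with the paper's: test $c_h$ with a discrete velocity reproducing $\mu_h\bn$ near $\Gamma_h$, lower-bound $c_h(\mu_h,\vh)$ by $\|\mu_h\|_{\Gamma_h}^2$ up to controllable errors, upper-bound $\tnrma{\vh}$, and combine with the $j_h$-term. The construction, however, is much simpler than what you flag as the main obstacle: the paper takes $\vh=I_V(\mu_h\bn)$, the nodal interpolant of the finite element function $\mu_h$ (extended by zero at the Lagrange nodes outside $\Omega_h^\Gamma$) times the closest-point-extended exact normal; no surface-to-volume extension $E\mu_h$ or cutoff is needed. The interpolation error is tamed by the observation that $\mu_h\circ\varphi_K\in\PP^{k_\lambda}$ with $k_\lambda\le k$, so $\|\mu_h\bn\|_{H^{k+1}(K)}\lesssim\|\mu_h\|_{H^{k}(K)}\lesssim h^{-k}\|\mu_h\|_{K}$ and the $O(h^{k+1})$ interpolation error collapses to $O(h^{1/2})\|\mu_h\|_{\Omega_h^\Gamma}\lesssim (h\sigma)^{1/2}\tnrma{\mu_h}$ via \eqref{EstLam}; this is precisely why the remark following the lemma insists that the multiplier degree not exceed the velocity degree, a point your proposal does not identify.

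The genuine gap is in your step (ii). It is not true that all four contributions to $\tnrma{\vh}^2$ are controlled by $h^{-1}\|\vh\cdot\nh\|_{\Gamma_h}^2$. Since $\vh\cdot\bn\approx\mu_h$ throughout the strip, the narrow-band term $\|(\bn\cdot\nabla)(\vh\cdot\bn)\|_{\Omega_h^\Gamma}^2$ and the divergence term are essentially $\|\bn\cdot\nabla\mu_h\|_{\Omega_h^\Gamma}^2$, and an inverse estimate alone yields only $h^{-2}\|\mu_h\|_{\Omega_h^\Gamma}^2\lesssim\sigma h^{-2}\tnrma{\mu_h}^2$ --- a factor $h^{-1}$ worse than the bound $\sigma h^{-1}\tnrma{\mu_h}^2$ obtained in \eqref{aux743} by invoking the stabilization, $\|\bn_h\cdot\nabla\mu_h\|_{\Omega_h^\Gamma}^2\lesssim h^{-1}j_h(\mu_h,\mu_h)$. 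Without this, one gets $h\tnrma{\vh}\lesssim\sigma^{1/2}\tnrma{\mu_h}$ in place of \eqref{aux712}, so the constant $\beta_c^\ast$ in \eqref{infsup3} degenerates like $\sigma^{1/2}$; since the combined inf-sup argument (Lemma~\ref{lemma:inf-sup4}) later chooses $\sigma$ small to absorb $|c_h(\mu_h,\vh^1)|\lesssim\sigma^{1/2}\tnrma{\mu_h}$ against $\beta_c^\ast\tnrma{\mu_h}$, a $\sigma$-dependent $\beta_c^\ast$ breaks that absorption. In short, $j_h$ must enter the \emph{upper} bound on $\tnrma{\vh}$ (its normal-derivative and divergence parts), not --- as you propose --- the lower bound on $c_h$: on $\Gamma_h$ itself your extension agrees with $\mu_h$, and the lower bound needs only the interpolation estimate above together with the geometric error $|1-\bn\cdot\bn_h|\lesssim h^{2k}$.
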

\begin{rmk}
We note that for the proof that follows, it is important that polynomial degree of the Lagrange multiplier  does not exceed the degree of the finite element  velocity space. 
\end{rmk}
\begin{proof} 
Fix $\mu_h\in \Sigma_h$ and extend $\mu_h$ to $\Omega_h^{\calT}$
by setting its values at all Lagrange nodes in $\Omega_h^{\calT}\backslash \Omega_h^{\Gamma}$ to zero.
The geometric error bound $\|\nb-\nb_h\|_{L^\infty(\Gamma_h)}\le c\,h^k$ implies $|1-\nb_h^T\nb|\le c h^{2k}$ on $\Gamma_h$. Therefore,
\begin{equation}\label{aux623}
c_h(\mh, \mh\nb)\ge (1-ch^{2k})\|\mh\|^2_{\Gamma_h}. \end{equation}
Since $\mh\nb$ is continuous,
its nodal interpolant $\vh=I_V(\mh\nb)\in \bV_h$ given in Lemma~\ref{lem:Interp} is well defined.  

Thanks to the approximation properties of the interpolant, trace inequality \eqref{eqn.unfitted-trace},  inverse inequality  and \eqref{EstLam}, we have
\begin{equation}\label{aux635}
\begin{split}
    \|\vh- \mh\nb\|_{\Gamma_h}^2 &\lesssim h^{-1} \|\vh- \mh\nb\|_{\Omega_h^\Gamma}^2+ 
    h \sum_{K\in \ThoG} \|\nabla(\vh- \mh\nb)\|_{K}^2\\
    &\lesssim h^{2k+1} \sum_{K\in \ThoG}\|\mh\nb\|_{H^{k+1}(K)}^2 \lesssim h^{2k+1} \sum_{K\in \ThoG}\|\mh\|_{H^{k+1}(K)}^2\\ 
    &= h^{2k+1} \sum_{K\in \ThoG}\|\mh\|_{H^{k}(K)}^2\lesssim h \sum_{K\in \ThoG}\|\mh\|_{K}^2 = h\|\mh\|_{\Omega_h^\Gamma}^2
    \lesssim h\,\sigma\tnrma{\mh}^2.
\end{split}
\end{equation}
In the chain of inequalities above we used that $\|\mh\|_{H^{k+1}(K)}\lesssim \|\mh\|_{H^{k}(K)}$ for $\mh\circ\varphi_K \in \PP^{k}(\hat{T})$. 
The same arguments as above can be used to estimate $\nabla(\vh- \mh\nb)$ in the whole computational domain:
\begin{equation}\label{aux694}
\sum_{K\in \calT_h^{\rm Al}} \|\nabla(\vh- \mh\nb)\|_{\OT}^2 \lesssim \|\mh\|^2_{K}\lesssim \|\mh\|_{\Omega_h^\Gamma}^2
\lesssim \sigma\tnrma{\mh}^2.
\end{equation}
{Here, the product $\mu_h \bn$ is understood to be the zero function
outside a $O(h)$ neighborhood of $\Gamma_h$.} 

To bound $\tnrma{\vh}$ from above, we split 
\begin{equation*}
\tnrma{\vh}^2
\le \sigma \big(h^{-1}\|\vh\|_{\Gamma_h}^2+\|\nabla\vh\|_{\Omega_h^\Gamma}^2+
 \|\nabla\vh\|_{\OT\setminus\Omega_h^\Gamma}^2\big)
+ \|(\nb\cdot\nabla)(\vh\cdot\bn)\|_{\Omega_h^\Gamma}^2+\|\nab\cdot \vh\|_{\Omega_h^{\Gamma}}^2.
\end{equation*}
To bound the first two terms,
we use  the triangle inequality followed by \eqref{aux635} and \eqref{EstLam},
\begin{equation}\label{aux645}
\begin{split}
h^{-1}\|\vh\|_{\Gamma_h}^2+\|\nabla\vh\|_{\Omega_h^\Gamma}^2
&\lesssim h^{-1}\|\mh\nb\|_{\Gamma_h}^2+\|\nabla(\mh\nb)\|_{\Omega_h^\Gamma}^2 \\
&\quad + h^{-1}\|\vh- \mh\nb\|_{\Gamma_h}^2+\|\nabla(\vh- \mh\nb)\|_{\Omega_h^\Gamma}^2\\
&\lesssim h^{-1}\|\mh\nb\|_{\Gamma_h}^2+\|\nabla(\mh\nb)\|_{\Omega_h^\Gamma}^2+\sigma\tnrma{\mh}^2\\
&\lesssim h^{-1}\|\mh\|_{\Gamma_h}^2+\|\mh\|_{H^1(\Omega_h^\Gamma)}^2+\sigma\tnrma{\mh}^2\\
&\le h^{-1}\|\mh\|_{\Gamma_h}^2+h^{-2}\|\mh\|_{\Omega_h^\Gamma}^2+\sigma\tnrma{\mh}^2\\
&\le \sigma h^{-2}\tnrma{\mh}^2.
\end{split}
\end{equation}
Let $S_h=\partial(\OT\setminus\Omega_h^\Gamma)$. Recalling  $\vh=0$ at all internal nodes of 
$\OT\setminus\Omega_h^\Gamma$, we have $\|\nabla\vh\|_{\OT\setminus\Omega_h^\Gamma}^2\lesssim 
h^{-1}\|\vh\|_{S_h}^2$. In turn, by a standard finite element trace inequality, we have 
$\|\vh\|_{S_h}^2\le h^{-1}\|\vh\|_{\Omega_h^\Gamma}^2$.
We also estimate $\|\vh\|_{\Omega_h^\Gamma}^2\lesssim h\|\vh\|_{\Gamma_h}^2+ h^2\|\nabla \vh\|_{\Omega_h^\Gamma}^2$.
Putting these estimates together, we have 
\begin{align*}
\|\nabla\vh\|_{\OT\setminus\Omega_h^\Gamma}^2
&\lesssim h^{-1} \|\vh\|_{S_h}^2 \lesssim h^{-2} \|\vh\|_{\Omega_h^{\Gamma}}^2\lesssim h^{-1} \|\vh\|_{\Gamma_h}^2 + \|\nab \vh\|_{\Omega_h^\Gamma}^2.
\end{align*}

Combining this with \eqref{aux645} gives
\begin{equation}\label{aux629}
     h^2(\|\nabla\vh\|_{\OT}^2+h^{-1}\|\vh\|_{\Gamma_h}^2)\lesssim \sigma\tnrma{\mh}^2.
\end{equation}
We now estimate the narrow-band part of the $\tnrma{\vh}$ norm with the help of \eqref{EstLam}, \eqref{aux635}, and the definition of the $j_h(\cdot,\cdot)$ stabilization form: 
\begin{equation}\label{aux743}
\begin{split}
\|(\nb\cdot\nabla)(\vh\cdot\bn)\|_{\Omega_h^\Gamma}^2 &\lesssim \|(\nb\cdot\nabla)(\mh\nb\cdot\nb)\|_{\Omega_h^\Gamma}^2
+ \|\mh\nb-\vh\|_{H^1(\Omega_h^\Gamma)}^2\\ 
& \lesssim \|\nb\cdot \nabla \mh\|_{\Omega_h^\Gamma}^2 + \sigma\tnrma{\mh}^2
\lesssim \|\nb_h\cdot \nabla \mh\|_{\Omega_h^\Gamma}^2+h^{2k}\|\nabla \mh\|_{\Omega_h^\Gamma}^2 + \sigma\tnrma{\mh}^2\\ 
&\lesssim h^{-1}j_h(\mh,\mh) + \sigma\tnrma{\mh}^2 \lesssim \sigma h^{-1}\tnrma{\mh}^2.
\end{split}
\end{equation}
 By the same arguments and \eqref{aux694}, we estimate the last (divergence)  term in the definition of $\tnrma{\vh}$ norm: 
\[
\begin{split}
\|\nabla\cdot\vh\|_{\OT}^2 &\le\|\nabla\cdot(\mh\nb)\|_{\OT}^2 
+ C\|\nabla(\mh\nb-\vh)\|_{\OT}^2\\ 
&\lesssim\|\nabla\cdot(\mh\nb)\|_{\OT}^2  
+\sigma\tnrma{\mh}^2\\ 
&\lesssim\|\nb\cdot\nabla\mh\|_{\OT}^2 + \|\mh\|_{\OT}^2  
+\sigma\tnrma{\mh}^2\\ 
&\lesssim h^{-1}j_h(\mh,\mh) + \|\mh\|_{\Omega_h^\Gamma}^2 +\sigma\tnrma{\mh}^2\\
& \lesssim \sigma h^{-1}\tnrma{\mh}^2.
\end{split}
\]

Combining this with \eqref{aux629}, \eqref{aux743}, and assuming $h$ is sufficiently small, i.e. $h\le \sigma$ holds,  we have
\begin{equation}\label{aux712}
     h\tnrma{\vh}\lesssim \sigma\tnrma{\mh}.
\end{equation}

Now \eqref{aux623},  \eqref{aux635}, \eqref{aux712} and the definition of norms lead to \eqref{infsup3} with the help of straightforward calculation for sufficiently small $h$:
\[
\begin{split}
c_h(\mh, \bv_h)\ge& (1-ch^{2k})\|\mh\|^2_{\Gamma_h}+ c_h(\mh, \bv_h-\mh\nb)
\ge c\|\mh\|^2_{\Gamma_h}- \frac12\|\bv_h-\mh\nb\|^2_{\Gamma_h}\\
\ge& 
c\|\mh\|^2_{\Gamma_h}- c_1 \sigma h\,\tnrma{\mh}^2.
\end{split}
\]
Multiplying both sides with $ \sigma^{-1} h$ and adding $ {\sigma^{-1}} j_h(\mh,\mh)$ gives
\[
{\sigma^{-1}} j_h(\mh,\mh) + c_h(\mh,\sigma^{-1}h\bv_h)\ge c\sigma^{-1}(h\|\mh\|^2_{\Gamma_h}+j_h(\mh,\mh)) - c_1 h^2\,\tnrma{\mh}^2
\ge c\tnrma{\mh}^2.
\]
Dividing both sides by  $\tnrma{\mh}$ and using both \eqref{aux712} and ${\sigma^{-1}} j_h(\mh,\mh)<\tnrma{\mh}^2$ leads to
\eqref{infsup3}.
\end{proof}

\subsubsection{Stability of the cumulative bilinear form}

We first show the stability of the  cumulative constraint bilinear form and then combine it with the well-known results for the stabilized diffusion term to show stability of the global system.

\begin{lem}
  \label{lemma:inf-sup4}
  Assume $\sigma>0$,  $h_2>0$ are sufficiently small. Then there exists $\beta_g^\ast >0$ independent of $h$, such that for all $h\leq h_2$ it holds
  \begin{equation}\label{infsup4}
    \beta_g^\ast(\tnrma{\mh}+ \tnrma{\qh})  \leq \sup_{\vh\in\Vh\setminus\{0\}} \frac{b_h(\qh, \vh)+ c_h(\mh, \vh) }{\tnrma{\vh}} + \sigma^{-\frac12} j_h(\mh,\mh)^{\frac12} \end{equation}
  for all $\qh\in\Qh^0,\, \mh\in\Sh.$
\end{lem}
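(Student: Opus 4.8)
The plan is to combine the two separate inf--sup results, Lemma~\ref{lemma:inf-sup2} (stability of $b_h$) and Lemma~\ref{lemma:inf-sup3} (stability of $c_h$), using a standard ``chaining'' argument, but with care about the scaling parameter $\sigma$. Fix $q_h\in Q_h^0$ and $\mu_h\in\Sigma_h$. First I would invoke Lemma~\ref{lemma:inf-sup3} to produce $\vb_h^c\in\bV_h$ with $c_h(\mu_h,\vb_h^c)\gtrsim \tnrma{\mu_h}^2 - \sigma^{-1/2}j_h(\mu_h,\mu_h)^{1/2}\tnrma{\mu_h}$ after normalizing so that $\tnrma{\vb_h^c}\le\tnrma{\mu_h}$; equivalently, keep the supremum form. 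Then I would invoke Lemma~\ref{lemma:inf-sup2} to produce $\vb_h^b\in\bV_h^0\subset\bV_h$ with $b_h(q_h,\vb_h^b)\ge\beta_b^\ast\tnrma{q_h}^2$ and $\tnrma{\vb_h^b}\le\tnrma{q_h}$. The key point is that, since $\vb_h^b\in\bV_h^0$ vanishes on $\partial\Omega_h^{\calT}\supset\Gamma_h$ — more precisely, the construction in Theorem~\ref{Th:fitted} can be taken to be supported away from $\Gamma_h$, so $c_h(\mu_h,\vb_h^b)=0$. This is the decoupling that makes the argument work: the pressure test function does not pollute the multiplier form.

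Next I would take the combined test function $\vb_h = \vb_h^c + \delta\,\vb_h^b$ for a small parameter $\delta>0$ to be chosen independently of $h$ and $\sigma$, and estimate
\begin{equation*}
b_h(q_h,\vb_h)+c_h(\mu_h,\vb_h) = c_h(\mu_h,\vb_h^c) + \delta\, b_h(q_h,\vb_h^b) + \delta\, b_h(q_h,\vb_h^c),
\end{equation*}
using $c_h(\mu_h,\vb_h^b)=0$. The first term is bounded below by the Lemma~\ref{lemma:inf-sup3} estimate, the second by $\delta\beta_b^\ast\tnrma{q_h}^2$, and the troublesome cross term $\delta\,b_h(q_h,\vb_h^c)$ is controlled via $|b_h(q_h,\vb_h^c)|\le\tnrma{q_h}\,\|\nab\cdot\vb_h^c\|_{\Omega_h^{\calT}}\le\tnrma{q_h}\tnrma{\vb_h^c}\le\tnrma{q_h}\tnrma{\mu_h}$, then absorbed by Young's inequality into the two good terms, at the cost of shrinking $\delta$. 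The denominator satisfies $\tnrma{\vb_h}\le\tnrma{\vb_h^c}+\delta\tnrma{\vb_h^b}\lesssim \tnrma{\mu_h}+\tnrma{q_h}$. Assembling these and dividing by $\tnrma{\vb_h}$ gives a lower bound of the form $c(\tnrma{\mu_h}+\tnrma{q_h}) - c'\sigma^{-1/2}j_h(\mu_h,\mu_h)^{1/2}$ for the supremum, after also using the elementary bound $(\tnrma{\mu_h}^2+\tnrma{q_h}^2)^{1/2}\approx\tnrma{\mu_h}+\tnrma{q_h}$, which yields \eqref{infsup4}.

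The main obstacle I anticipate is the bookkeeping of the $\sigma$-dependence: all constants in Lemmas~\ref{lemma:inf-sup2} and~\ref{lemma:inf-sup3} are asserted to be $\sigma$-independent, and the claim of Lemma~\ref{lemma:inf-sup4} must likewise be $\sigma$-independent, so one has to make sure that $\delta$ and the Young's-inequality splitting do not secretly introduce a negative power of $\sigma$. The cross term is the sensitive place: the bound $|b_h(q_h,\vb_h^c)|\le\tnrma{q_h}\tnrma{\mu_h}$ is $\sigma$-free (the divergence term in $\tnrma{\cdot}$ carries no $\sigma$), so this is fine, but it should be spelled out. A secondary point to verify carefully is that the test function from Theorem~\ref{Th:fitted} really can be chosen with support disjoint from $\Gamma_h$ (or at least vanishing on $\Gamma_h$) so that $c_h(\mu_h,\vb_h^b)=0$ exactly; if only $\vb_h^b\in\bV_h^0$ is available, one instead estimates $|c_h(\mu_h,\vb_h^b)|\lesssim h^{1/2}\|\mu_h\|_{\Gamma_h}\,h^{-1/2}\|\vb_h^b\|_{\Gamma_h}$ and notes $h^{-1/2}\|\vb_h^b\|_{\Gamma_h}\lesssim\|\vb_h^b\|_{H^1(\Omega_h^{\calT})}$ by \eqref{aux625}, while $h^{1/2}\|\mu_h\|_{\Gamma_h}\le\sigma^{1/2}\tnrma{\mu_h}$, so the cross term is $\lesssim\sigma^{1/2}\tnrma{\mu_h}\tnrma{q_h}$ and is again absorbable — in fact the $\sigma^{1/2}$ helps. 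Either route closes the argument.
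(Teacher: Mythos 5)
Your overall strategy is the same as the paper's: combine the two separate inf--sup results by testing with a linear combination of the two constructed test functions, exploit the fact that the pressure test function lies in $\Vh^0$ so that its contribution to $c_h$ carries a factor $\sigma^{1/2}$, and then fix the coupling constant and $\sigma$. Two points need attention before the argument closes.

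First, the claim that $c_h(\mu_h,\vb_h^b)=0$ is not available: Theorem~\ref{Th:fitted} only delivers $\vb_h^b\in\Vh^0$, i.e.\ vanishing on $\partial\OT$, while $\Gamma_h$ runs through the interiors of the cut elements, so $\vb_h^b$ does not vanish on $\Gamma_h$. Your fallback is, however, exactly the right mechanism and is essentially what the paper does: $|c_h(\mu_h,\vb_h^b)|\le\|\mu_h\|_{\Gamma_h}\|\vb_h^b\|_{\Gamma_h}\lesssim\sigma^{1/2}\tnrma{\mu_h}\,\|\vb_h^b\|_{H^1(\OT)}\lesssim\sigma^{1/2}\tnrma{\mu_h}\tnrma{q_h}$, using \eqref{aux625} and $h\|\mu_h\|_{\Gamma_h}^2\le\sigma\tnrma{\mu_h}^2$. (The paper routes this through the narrow-band term $\|(\nb\cdot\nabla)(\vh^1\cdot\bn)\|_{\Omega_h^\Gamma}$ in the velocity norm; your shortcut works because the construction gives $\|\vb_h^b\|_{H^1(\OT)}\le\tnrma{q_h}$ directly, without a $\sigma^{-1/2}$ penalty.) This $\sigma^{1/2}$ is precisely where the requirement that $\sigma$ be small enters; the exact-vanishing claim should simply be dropped.

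Second, the displayed expansion is wrong, and the resulting prescription for $\delta$ is backwards. With $\vb_h=\vb_h^c+\delta\vb_h^b$ one has $b_h(q_h,\vb_h)+c_h(\mu_h,\vb_h)=c_h(\mu_h,\vb_h^c)+\delta\, c_h(\mu_h,\vb_h^b)+\delta\, b_h(q_h,\vb_h^b)+b_h(q_h,\vb_h^c)$: the cross term $b_h(q_h,\vb_h^c)$ carries coefficient $1$, not $\delta$ (each component of the test function must enter both forms with the same weight). Since $|b_h(q_h,\vb_h^c)|\le\tnrma{q_h}\tnrma{\mu_h}$ has no small factor, Young's inequality forces the good term $\delta\beta_b^\ast\tnrma{q_h}^2$ to dominate a fixed multiple of $\tnrma{q_h}^2$, so $\delta$ must be taken \emph{large} (bounded below), not shrunk --- this is exactly why the paper writes $\vh=c_r\vh^1+\vh^2$ with $c_r$ large. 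One then fixes $\delta$ (equivalently $c_r$) first and takes $\sigma$ small afterwards so that the term $\delta\, C\sigma^{1/2}\tnrma{\mu_h}\tnrma{q_h}$ from the first point is absorbable. With these two corrections your argument closes and coincides with the paper's proof.
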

\begin{proof} The key observation is the following: If $\vh$ vanishes on a boundary of $\OT$, then $\vh\cdot\bn_h$ in $c_h(\mh, \vh)$ can be properly controlled on  $\Gamma_h$ which is within the $O(h)$ distance from $\partial\OT$. More precisely, 
let $\vh^1\in\Vh^0$ be the (normalized) test function given by Lemma~\ref{lemma:inf-sup2},
i.e. 
\[
\beta_b^\ast\tnrma{\qh} \leq b_h(\qh, \vh^1)\quad \text{and}\quad \tnrma{\vh^1}\le 1
\]
holds. Then we estimate by using $\vh^1=0$ on $\partial\OT$,
\[
\begin{split}
c_h(\mh, \vh^1)&\le \|\mh\|_{\Gamma_h}\|\vh^1\cdot\bn_h\|_{\Gamma_h}
\le \|\mh\|_{\Gamma_h}\|\vh^1\cdot\bn\|_{\Gamma_h}+ h^{q}\|\mh\|_{\Gamma_h}\|\vh^1\|_{\Gamma_h}\\
&\le C\,h^{\frac12}\|\mh\|_{\Gamma_h}(\|(\nb\cdot\nabla)(\vh^1\cdot\bn)\|_{\Omega_h^\Gamma}+ h^{k-\frac12}\|\vh^1\|_{\Gamma_h})\\
&\le C\sigma^{\frac12}\tnrma{\mh}\tnrma{\vh^1}\le C\sigma^{\frac12}\tnrma{\mh}.
\end{split}
\]
Hence choosing a suitable $\sigma>0$, but independent of $h$, one can make the factor $C\sigma^{\frac12}$ on the right hand side sufficiently small.

The rest of the proof follows standard textbook arguments.
Using \eqref{infsup3}, there exists $\vh^2\in \bV_h$ satisfying $\beta_c^*\tnrma{\mh}\le c_h(\mh,\vh^2)+{\sigma^{-\frac12}} j_h(\mh,\mh)^{\frac12}$ and $\tnrma{\vh^2}\le 1$.
We then set $\vh = c_r \vh^1+\vh^2$ for some $c_r>0$ to be determined, and note that $\tnrma{\bv_h}\le 1+c_r$. We then find
\begin{align*}
b_h(q_h,\bv_h)+c_h(\mu_h,\bv_h) 
& = c_r b_h(q_h\bv_h^1) +b_h(q_h,\bv_h^2) + c_r c(\mu_h,\bv_h^1) + c(\mu_h,\bv_h^2)\\
&\ge c_r \beta_b^* \tnrma{q_h} +\beta_c^*\tnrma{\mu_h} - {\sigma^{-\frac12}}j_h(\mu_h,\mu_h)^{\frac12} - |b_h(q_h,\bv_h^2)|-c_r |c(\mu_h,\bv_h^1)|\\
&\ge c_r \beta_b^* \tnrma{q_h}+(\beta_c^*-c_r C \sigma^{\frac12})\tnrma{\mu_h} - {\sigma^{\frac12}} j_h(\mu_h,\mu_h)^{\frac12} - |b_h(q_h,\bv_h^2)|.
\end{align*}
The Cauchy-Schwarz inequality and the definition of the discrete norms yield
$b_h(q_h,\bv_h^2)\le \tnrma{q_h}$.
Thus,
\begin{align*}
b_h(q_h,\bv_h)+c_h(\mu_h,\bv_h) 
+{\sigma^{-\frac12}}j_h(\mu_h,\mu_h)^{\frac12}
\ge \big(c_r \beta_b^*-1\big) \tnrma{q_h}+(\beta_c^*-c_r C \sigma^{\frac12})\tnrma{\mu_h}.
\end{align*}
We now take $c_r$ sufficiently large
and $\sigma$ sufficiently small (but independent of mesh
parameters) to conclude
\begin{align*}
b_h(q_h,\bv_h)+c_h(\mu_h,\bv_h) 
+\sigma^{-\frac12}j_h(\mu_h,\mu_h)^{\frac12}
\ge \frac12 (\beta_b^*+\beta_c^*)(\tnrma{\mu_h}+\tnrma{q_h}).
\end{align*}
The desired result now follows this estimate
and $\tnrma{\bv_h}\le 1+c_r$.
\end{proof}

\subsubsection{Stability of the velocity}
To show the coercivity of the bilinear form
$a_h(\cdot,\cdot)$ as well as to derive error estimates, 
we need the following result, which is analogous to Lemmas~5.1 and~5.5 from \cite{LO19}.
Its proof is given in the appendix.
\begin{lem}\label{LAux}
It holds
\begin{align} \label{aux702}
    \|\nab \vh\|_{K_1}^2\lesssim
    \|\nab \vh\|_{K_2}^2 &+ \frac{1}{h^2}\int_{\omega_F}\jump{\vh}_\omega^2\dif \xb\quad \forall\, F\in \Fhogp\qquad \forall \vh\in \bV_h,\\\label{aux705}
 i_h(\vb,\vb)&\lesssim h^{2k} \|\vb\|^2_{H^{k+1}(\Omega_h^\calT)}\\
 \label{aux707}
 i_h(\vb-\pi_V\vb,\vb-\pi_V\vb)&\lesssim h^{2k} \|\vb\|^2_{H^{k+1}(\Omega_h^\calT)}\quad\forall\,\vb\in \bH^{k+1}(\Omega_h^\calT).
\end{align}
\end{lem}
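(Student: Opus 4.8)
The plan is to establish the three estimates in turn, reducing each to its affine counterpart in \cite[Lemmas~5.1 and~5.5]{LO19} by pulling back through the map $\bPhi_h$; the only genuinely new feature is that the pulled-back finite element functions are \emph{rational} rather than polynomial, because of the element-wise Piola/Jacobian weighting, so the canonical polynomial extension used in \cite{LO19} is replaced by the rational extension $\mathcal{E}^\mathcal{R}$. I take $h$ small enough that $\det D\bPhi_h$ and its canonical polynomial extension stay uniformly bounded above and below on each macro-patch $\omega_F$, which is legitimate since $\bPhi_h$ is an $O(h)$ perturbation of the identity and coincides with the identity away from an $O(h)$ neighbourhood of $\Gamma$. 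For \eqref{aux702}, fix $F=K_1\cap K_2\in\Fhogp$ and decompose, on $K_1$, $\vh=\bw_2+\jump{\vh}_\omega$ with $\bw_2:=\mathcal{E}^\mathcal{R}(\vh\circ\bPhi_h|_{\tilde K_2})\circ\bPhi_h^{-1}$ the canonical continuation into $K_1$ of the $K_2$-piece. Then $\|\nabla\vh\|_{K_1}\le\|\nabla\bw_2\|_{K_1}+\|\nabla\jump{\vh}_\omega\|_{K_1}$, where the second summand is controlled by $h^{-1}\|\jump{\vh}_\omega\|_{K_1}$ by an inverse inequality for the rational function $\jump{\vh}_\omega$ (fixed numerator degree, denominator bounded away from zero) on $K_1$, and the first by $\|\nabla\vh\|_{K_2}$ by stability of $\mathcal{E}^\mathcal{R}$ over the shape-regular patch $\omega_F$; both constants are made $h$-independent by mapping $\omega_F$ to a unit-size reference configuration where the pulled-back functions range over a fixed finite-dimensional family of rational functions with uniformly non-degenerate denominators, to which a norm-equivalence and scaling argument applies.

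For \eqref{aux707}, the starting point is that $\pi_V\bv\in\bV_h$ and $(\pi_V\bv)_p=\pi_V\bv$ — the projection $\Pi_{\tilde K}$ onto $\PP^{2k-1}$ in the definition of $(\cdot)_p$ is chosen precisely so that this operation is the identity on $\bV_h$ — so by linearity $i_h(\bv-\pi_V\bv,\bv-\pi_V\bv)=i_h(\bv_p-\pi_V\bv,\bv_p-\pi_V\bv)=\gamma_{gp}\sum_{F\in\Fhogp}h^{-2}\|\jump{\bv_p-\pi_V\bv}_\omega\|_{\omega_F}^2$. On each patch $\|\jump{\bv_p-\pi_V\bv}_\omega\|_{\omega_F}\lesssim\|\bv_p-\pi_V\bv\|_{\omega_F}$ by the extension-stability estimate from the previous paragraph (the function involved has bounded rational complexity and a non-degenerate denominator), and $\|\bv_p-\pi_V\bv\|_{\omega_F}\le\|\bv_p-\bv\|_{\omega_F}+\|\bv-\pi_V\bv\|_{\omega_F}\lesssim h^{k+1}\|\bv\|_{H^{k+1}(\omega_F)}$: the first term because $\Pi_{\tilde K}$ reproduces $\PP^{2k-1}\supset\PP^{k}$ and $\bPhi_h$ has the required smoothness, the second by Lemma~\ref{lem:Fortin} together with Lemma~\ref{lem:Interp}. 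Summing over $F$ with the finite overlap of the patches yields $i_h(\bv-\pi_V\bv,\bv-\pi_V\bv)\lesssim h^{-2}h^{2k+2}\|\bv\|_{H^{k+1}(\OT)}^2=h^{2k}\|\bv\|_{H^{k+1}(\OT)}^2$.

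For \eqref{aux705}, $\bv_p$ is not small, so instead one exploits that the volumetric patch jump annihilates functions that are single-valued across $\omega_F$ as rational functions; in particular $\jump{\bw}_\omega=0$ whenever $\bw=Q\circ\bPhi_h^{-1}$ on $\omega_F$ for a single polynomial $Q$ in the background coordinate, since then $\mathcal{E}^\mathcal{R}Q=Q$. Thus $\jump{\bv_p}_\omega=\jump{\bv_p-\bw}_\omega$, so $\|\jump{\bv_p}_\omega\|_{\omega_F}\lesssim\|\bv_p-\bw\|_{\omega_F}$ by extension stability, and choosing $Q$ adapted to the Jacobian-weighted structure of $\bv_p$ and to a local polynomial best approximation of $\bv$ gives $\|\bv_p-\bw\|_{\omega_F}\lesssim h^{k+1}\|\bv\|_{H^{k+1}(\omega_F)}$, whence $i_h(\bv,\bv)\lesssim h^{2k}\|\bv\|_{H^{k+1}(\OT)}^2$ after summation; alternatively, one may split $i_h(\bv,\bv)\le 2\,i_h(\bv-\pi_V\bv,\bv-\pi_V\bv)+2\,i_h(\pi_V\bv,\pi_V\bv)$, bound the first term by \eqref{aux707}, and bound the second by the same annihilation argument applied to the finite element function $\pi_V\bv$. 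I expect the main obstacle to be this last bookkeeping in \eqref{aux705}: reconciling the element-wise Jacobian weighting — which makes both $\bv_h\in\bV_h$ and the extended projections $\bv_p$ genuinely rational, with denominators that jump across the facets of $\Tho$ since $\bPhi_h$ is only continuous — with the single-valuedness needed for the patch jump to annihilate the correction $\bw$, while still keeping $\bw$ an $O(h^{k+1})$-accurate approximation of $\bv$. Handling this requires quantitative use of the geometric properties of the isoparametric map (closeness of $\bPhi_h$ to the identity, smallness of $\det D\bPhi_h-1$ and of the inter-element jumps of $D\bPhi_h$, and confinement of the non-trivial facets to an $O(h)$ neighbourhood of $\Gamma$), propagated between neighbouring elements by inverse estimates; once this is in place, the remaining steps — extension stability for $\mathcal{E}^\mathcal{R}$, inverse estimates for rational functions of fixed complexity, and the approximation property of $\Pi_{\tilde K}$ — are routine.
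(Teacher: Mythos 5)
Your treatment of \eqref{aux702} coincides with the paper's: pull back through $\bPhi_h$, split off the rational continuation $\mathcal{E}^\mathcal{R}(\vh\circ\bPhi_h|_{\tilde K_2})\circ\bPhi_h^{-1}$, and use extension stability plus an inverse inequality; the uniform lower bound on the extended Jacobian that you assume is exactly the paper's \eqref{Jbound}, which follows from \eqref{eqn:detPhi}. Your first route for \eqref{aux705} is also essentially the paper's argument: the paper inserts $\Pi_B\bv$, the $L^2$-projection onto $\PP^k$ over a ball containing $\omega_F$, and combines extension stability with the key local estimate $\|\bv-\bv_p\|_{K}\lesssim h^{k+1}\|\bv\|_{H^{k+1}(K)}$, proved by Bramble--Hilbert applied to the weighted quantity $\tilde J_K\,\bv\circ\bPhi_h$ together with the derivative bounds on $\det(D\varphi_K)$. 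Your variant of taking $\bw=Q\circ\bPhi_h^{-1}$ with a single background polynomial $Q$, so that $\jump{\bw}_\omega$ vanishes exactly, is a legitimate (arguably cleaner) way to organize the same computation. Your observation that $(\pi_V\bv)_p=\pi_V\bv$ is correct: for $\uh\in\bV_h$ the product $\tilde J_K\,\uh\circ\bPhi_h$ is a polynomial of degree $2k-1$ on $\tilde K$, so $\Pi_{\tilde K}$ reproduces it.

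The genuine gap is in \eqref{aux707}. You reduce to $\|\bv_p-\pi_V\bv\|_{\omega_F}\le\|\bv_p-\bv\|_{\omega_F}+\|\bv-\pi_V\bv\|_{\omega_F}$ and claim the second term is $\lesssim h^{k+1}\|\bv\|_{H^{k+1}(\omega_F)}$ ``by Lemma~\ref{lem:Fortin} together with Lemma~\ref{lem:Interp}.'' No such local $L^2$ estimate is available: writing $\pi_V\bv=I_V\bv+\bw_h$, the Fortin correction $\bw_h\in\bX_h^\perp$ is a \emph{global} object whose only bound is $\|\bw_h\|_{H^1(\Omega_h^{\calT})}\lesssim\tnrma{\bv-I_V\bv}\lesssim h^k\|\bv\|_{H^{k+1}(\Omega_h^{\calT})}$ --- an $O(h^k)$ energy-norm bound, not an $O(h^{k+1})$ patchwise $L^2$ bound. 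A global Poincar\'e inequality would only give $\sum_F h^{-2}\|\bw_h\|^2_{\omega_F}\lesssim h^{-2}\|\nab\bw_h\|^2_{\Omega_h^{\calT}}\lesssim h^{2k-2}$, two orders short of \eqref{aux707}. The missing ingredient is to exploit that $\bw_h\in\bV_h^0$ vanishes on $\p\Omega_h^{\calT}$ and that the ghost-penalty patches lie in an $O(h)$ band adjacent to $\p\Omega_h^{\calT}$, so a narrow-band Poincar\'e inequality gives $\sum_{K\in\Thogp}h^{-2}\|\bw_h\|^2_K\lesssim\|\nab\bw_h\|^2_{\Omega_h^{\calT}}\lesssim h^{2k}\|\bv\|^2_{H^{k+1}(\Omega_h^{\calT})}$; the contribution of $\bv-I_V\bv$ is then handled locally by Lemma~\ref{lem:Interp}. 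This is exactly how the paper closes the argument. Note also that your alternative route for \eqref{aux705} via $i_h(\bv,\bv)\le 2\,i_h(\bv-\pi_V\bv,\bv-\pi_V\bv)+2\,i_h(\pi_V\bv,\pi_V\bv)$ inherits the same problem, since bounding $i_h(\pi_V\bv,\pi_V\bv)$ again requires controlling the patch jumps of the global correction $\bw_h$; your first route for \eqref{aux705}, which bypasses $\pi_V$ entirely, is the one to keep.
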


By a standard argument, see, e.g., \cite[Lemma 5.2]{LO19}, a repeated application of \eqref{aux702} yields 
\begin{equation}\label{ExtFE}
    \|\nab \bv_h\|^2_{\OT}\lesssim  \|\nab \bv_h\|^2_{\Omega_h} + i_h(\bv_h,\bv_h)\quad \forall\, \bv_h\in \bV_h.
\end{equation}
From \eqref{ExtFE}, the coercivity 
of $(a_h+i_h)(\cdot,\cdot)$ is immediate provided $\gamma_n$ is sufficiently large.

\begin{lem}
For $\gamma_n>0$ sufficiently large, there holds $\tnrma{\bv_h}^2\lesssim (a_h+i_h)(\vh,\vh)$ for all $\vh\in \bV_h$.
\end{lem}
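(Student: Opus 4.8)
The plan is to prove a Nitsche-type coercivity estimate: first expand $(a_h+i_h)(\vh,\vh)$, absorb the consistency (normal-derivative) term into the bulk Dirichlet energy plus the ghost-penalty, and then dominate the two remaining contributions to $\tnrma{\vh}^2$ — the divergence term and the narrow-band term — by the Dirichlet and boundary energies already controlled. Concretely, I would start from
\[
(a_h+i_h)(\vh,\vh) = \|\nabla\vh\|_{\Oh}^2 + i_h(\vh,\vh) - 2\int_{\Gh}(\nh\cdot\nabla\vh)\cdot\vh\dif s + \frac{\gamma_n}{h}\|\vh\|_{\Gh}^2,
\]
and for the cross term use Cauchy–Schwarz and Young's inequality to get, for any $\epsilon>0$,
\[
2\Big|\int_{\Gh}(\nh\cdot\nabla\vh)\cdot\vh\dif s\Big| \le \epsilon h\sum_{K\in\ThoG}\|\nh\cdot\nabla\vh\|_{\Gh\cap K}^2 + \tfrac{1}{\epsilon h}\|\vh\|_{\Gh}^2.
\]
The unfitted trace inequality \eqref{eqn.unfitted-trace} applied to $\nabla\vh$ (piecewise smooth on $\calT_h^{\rm Al}$), together with the standard inverse estimate $\|\nabla^2\vh\|_K\lesssim h^{-1}\|\nabla\vh\|_K$ valid uniformly for the isoparametric spaces, gives $h\|\nh\cdot\nabla\vh\|_{\Gh\cap K}^2\lesssim\|\nabla\vh\|_K^2$; summing over $K\in\ThoG$ and invoking the ghost-penalty extension bound \eqref{ExtFE} shows this is $\le C_\ast\big(\|\nabla\vh\|_{\Oh}^2+i_h(\vh,\vh)\big)$ with $C_\ast$ independent of $h$ and of the cut.

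Combining these bounds yields
\[
(a_h+i_h)(\vh,\vh) \ge (1-\epsilon C_\ast)\big(\|\nabla\vh\|_{\Oh}^2 + i_h(\vh,\vh)\big) + \big(\gamma_n - \tfrac1{\epsilon}\big)\,\tfrac1h\|\vh\|_{\Gh}^2.
\]
Choosing $\epsilon$ small enough that $\epsilon C_\ast\le\tfrac12$, and then $\gamma_n\ge 2/\epsilon$, and applying \eqref{ExtFE} once more, gives $(a_h+i_h)(\vh,\vh)\gtrsim \|\nabla\vh\|_{\OT}^2 + h^{-1}\|\vh\|_{\Gh}^2$ (and of course $\gtrsim i_h(\vh,\vh)\ge0$). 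It then remains to control the last two terms of $\tnrma{\vh}^2$. For the divergence term, $|\nabla\cdot\vh|\le\sqrt d\,|\nabla\vh|$ pointwise on each $K\in\calT_h^{\rm Al}$, so $\|\nab\cdot\vh\|_{\Omega_h^{\calT}}^2\lesssim\|\nabla\vh\|_{\OT}^2$. For the narrow-band term, since $\bn$ and its derivatives are uniformly bounded on $\Omega_h^\Gamma$, we have $\|(\nb\cdot\nabla)(\vh\cdot\bn)\|_{\Omega_h^\Gamma}^2\lesssim\|\nabla\vh\|_{\Omega_h^\Gamma}^2+\|\vh\|_{\Omega_h^\Gamma}^2$; the first piece is $\le\|\nabla\vh\|_{\OT}^2$, and the second is handled by the narrow-band Poincaré estimate $\|\vh\|_{\Omega_h^\Gamma}^2\lesssim h\|\vh\|_{\Gh}^2 + h^2\|\nabla\vh\|_{\Omega_h^\Gamma}^2\lesssim h^2\big(h^{-1}\|\vh\|_{\Gh}^2 + \|\nabla\vh\|_{\OT}^2\big)$. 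Putting everything together and using $\sigma\le1$ gives
\[
\tnrma{\vh}^2 \lesssim \|\nabla\vh\|_{\OT}^2 + h^{-1}\|\vh\|_{\Gh}^2 \lesssim (a_h+i_h)(\vh,\vh),
\]
which is the claim.

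The only real subtlety I anticipate is the bookkeeping in the choice of $\epsilon$ and $\gamma_n$: the ghost-penalty term $i_h(\vh,\vh)$ appears on the right-hand side of \eqref{ExtFE} used to bound the consistency term, so one must make sure its contribution there can be reabsorbed into the $i_h(\vh,\vh)$ already present in $(a_h+i_h)(\vh,\vh)$ (hence the factor $(1-\epsilon C_\ast)$ multiplying both the Dirichlet energy and $i_h$ simultaneously). A secondary, but entirely routine, point is that the trace and inverse inequalities must be applied with constants uniform in $h$ and the geometry–mesh cut on the curved, Piola-mapped elements, which holds because $\bPhi_h$ is a diffeomorphism uniformly close to the identity and reduces to the identity away from $\Omega_h^\Gamma$; everything else is a direct combination of \eqref{eqn.unfitted-trace}, standard inverse estimates, the narrow-band Poincaré bound, and \eqref{ExtFE}.
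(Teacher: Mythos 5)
Your proof is correct and is precisely the standard Nitsche coercivity argument that the paper invokes implicitly: it only remarks that the claim is ``immediate'' from the ghost-penalty extension bound \eqref{ExtFE}, and your write-up supplies exactly the omitted details (Young's inequality on the consistency term, the trace/inverse estimates on cut elements, the reabsorption of $i_h$, and the control of the divergence and narrow-band terms via the same narrow-band Poincar\'e bound the paper uses in Lemma~\ref{lemma:inf-sup3}). No gaps; the approach matches the paper's.
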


\begin{thm}[Main stability result]\label{mainStab}
  For $\gamma_n>0$ sufficiently large and $\sigma$, $h>0$ sufficiently small, it holds for all $(\uh,\ph,\lh)\in\Vh\times\Qh^0\times\Sh$ that
  \begin{equation}\label{eqn:BigInfSup}
    \tnrma{(\uh,\ph,\lh)} \lesssim \sup_{\substack{(\vh,\qh,\mh)\in\Vh\times\Qh^0\times\Sh\\(\vh,\qh,\mh)\neq(0,0,0)}} \frac{A_h((\uh,\ph,\lh), (\vh,\qh,\mh))}{\tnrma{(\vh,\qh,\mh)}}.
  \end{equation}
\end{thm}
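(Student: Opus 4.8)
The plan is to use the classical ``combined inf--sup'' construction for stabilized saddle-point problems: given $(\uh,\ph,\lh)\in\Vh\times\Qh^0\times\Sh$, I will exhibit a single test triple $(\vh,\qh,\mh)\neq(0,0,0)$ with $A_h((\uh,\ph,\lh),(\vh,\qh,\mh))\gtrsim\tnrma{(\uh,\ph,\lh)}^2$ and $\tnrma{(\vh,\qh,\mh)}\lesssim\tnrma{(\uh,\ph,\lh)}$, which is exactly \eqref{eqn:BigInfSup}. The three ingredients are the coercivity estimate $\tnrma{\vh}^2\lesssim(a_h+i_h)(\vh,\vh)$ from the preceding lemma (for $\gamma_n$ large), the combined inf--sup bound \eqref{infsup4} for $b_h$ and $c_h$ up to the $j_h$-defect, and the continuity of all the forms in the triple norm. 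The last fact is not recorded separately but is a routine consequence of the unfitted trace inequality \eqref{eqn.unfitted-trace}, inverse estimates, the ghost-penalty bounds \eqref{aux705}--\eqref{ExtFE}, and the narrow-band estimate \eqref{EstLam}; in particular $|(a_h+i_h)(\uh,\vh)|\lesssim\tnrma{\uh}\tnrma{\vh}$ with a constant depending on the \emph{fixed} parameter $\sigma$ but independent of $h$.

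First I would take the ``diagonal'' part $(\uh,-\ph,-\lh)$. Since $b_h$ and $c_h$ are linear in their first arguments, the mixed couplings cancel, and
\[
A_h((\uh,\ph,\lh),(\uh,-\ph,-\lh))=(a_h+i_h)(\uh,\uh)-j_h(\lh,\lh)\gtrsim\tnrma{\uh}^2-j_h(\lh,\lh),
\]
where the $j_h$-term enters with precisely the sign that lets it be combined, via Young's inequality, with the $j_h$-defect $\sigma^{-\frac12}j_h(\lh,\lh)^{\frac12}$ from \eqref{infsup4}. Next, by \eqref{infsup4} I choose $\vh^\flat\in\Vh$ with $\tnrma{\vh^\flat}=\tnrma{\ph}+\tnrma{\lh}$ and $b_h(\ph,\vh^\flat)+c_h(\lh,\vh^\flat)\ge\beta_g^\ast(\tnrma{\ph}+\tnrma{\lh})^2-\sigma^{-\frac12}j_h(\lh,\lh)^{\frac12}(\tnrma{\ph}+\tnrma{\lh})$, and set $(\vh,\qh,\mh)=(\uh+\delta\,\vh^\flat,-\ph,-\lh)$ for a parameter $\delta>0$ to be chosen. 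Expanding and using bilinearity,
\[
A_h((\uh,\ph,\lh),(\vh,\qh,\mh))=(a_h+i_h)(\uh,\uh)-j_h(\lh,\lh)+\delta(a_h+i_h)(\uh,\vh^\flat)+\delta\big(b_h(\ph,\vh^\flat)+c_h(\lh,\vh^\flat)\big).
\]
I would then bound the cross term $\delta(a_h+i_h)(\uh,\vh^\flat)$ by continuity and Young's inequality against a fraction of $\tnrma{\uh}^2$ plus a multiple of $(\tnrma{\ph}+\tnrma{\lh})^2$, absorb the $j_h$-defect against the $-j_h(\lh,\lh)$ term plus a further multiple of $(\tnrma{\ph}+\tnrma{\lh})^2$, and finally fix $\delta$ small and $\sigma$ small (compatibly with the smallness already demanded in Lemmas~\ref{lemma:inf-sup2}--\ref{lemma:inf-sup4} and in the coercivity lemma) so that $\delta\beta_g^\ast(\tnrma{\ph}+\tnrma{\lh})^2$ dominates every negative $(\tnrma{\ph}+\tnrma{\lh})^2$-contribution. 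This yields $A_h((\uh,\ph,\lh),(\vh,\qh,\mh))\gtrsim\tnrma{\uh}^2+\tnrma{\ph}^2+\tnrma{\lh}^2$, and since $\delta\le1$ also $\tnrma{(\vh,\qh,\mh)}\le\tnrma{\uh}+\delta\tnrma{\vh^\flat}+\tnrma{\ph}+\tnrma{\lh}\lesssim\tnrma{(\uh,\ph,\lh)}$, which completes the argument.

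The step I expect to be the main obstacle is the interlocked choice of the two small parameters $\delta$ and $\sigma$. The $j_h$-stabilization appears on the one hand inside the diagonal test and on the other hand as the defect of the combined inf--sup \eqref{infsup4}, and one must verify that the diagonal $j_h$-contribution together with the gain from coercivity exactly absorbs that defect and the $a_h$-cross term, with all constants independent of $h$ and of the geometry--mesh cut configuration; since the earlier lemmas already fix $\sigma$ small, the order of quantifiers (choose $\delta$ depending on $\beta_g^\ast$, the coercivity constant, and the continuity constants; then shrink $\sigma$) must be respected. The only other slightly technical point, establishing the triple-norm continuity of $a_h$ --- in particular of its Nitsche surface terms --- is routine given \eqref{eqn.unfitted-trace}, inverse estimates, and \eqref{ExtFE}.
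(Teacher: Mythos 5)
Your proposal is correct and follows essentially the same route as the paper, which states only that the result ``follows standard arguments'' from the coercivity of $(a_h+i_h)(\cdot,\cdot)$ and the combined inf--sup bound \eqref{infsup4} and refers to \cite{BB03,ern2004theory,guzman2018inf} for details; your test triple $(\uh+\delta\,\vh^\flat,-\ph,-\lh)$ and the quantifier ordering (fix $\sigma$ first, then choose $\delta$ depending on $\sigma$, $\beta_g^\ast$ and the continuity/coercivity constants) are exactly the intended construction. The only point to watch is the sign convention for $j_h$ --- the definition \eqref{stabLM} carries a minus sign while the norm and \eqref{infsup4} treat $j_h(\mh,\mh)$ as nonnegative --- but under the intended reading the diagonal contribution $-j_h(\lh,\lh)$ is indeed the nonnegative quantity that absorbs the Young-split defect, as you use it.
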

\begin{proof} The proof follows standard arguments,  basing on the coercivity of $(a_h+i_h)(\cdot,\cdot)$ on $\Vh$ for sufficiently large $\gamma_n$ and the inf-sup result \eqref{infsup4}.  See, e.g.~\cite{BB03,ern2004theory} for the case $j_h=0$ and extension for $j_h\neq 0$ in \cite{guzman2018inf}. 
\end{proof}
\section{Error Analysis} \label{sec:EAnalysis}
Since Theorem~\ref{mainStab} establishes 
stability of the scheme, the finite element method
\eqref{eqn.discr-prob} is well posed,
and the error analysis reduces to the consistency
of the scheme. Recall that the velocity space $\bV_h$
is $\bH({\rm div})$-conforming but not a subspace of $\bH^1$.
Therefore, in addition to geometric consistency,
we must also address the non-conformity of the velocity space.
Here, we estimate this consistency error based
on the results given in  \cite{DN24,NO21}.
The arguments in these works are already highly technical
in two dimensions and do not extend immediately to three dimensions. 
For this reason we restrict our error analysis to the 2D case.

The following lemmas estimate the consistency error arising 
from the non-conformity $\bV_h\not\subset \bH^1(\Omega^{\calT}_h)$.
\begin{lem}\label{lem:Consist}
Suppose  $d=2$ and the edge degrees of freedom 
of $\bV_h$ are the $\bPhi_h$-mapped Gauss-Lobatto nodes.
Then there holds, for all $\bw\in \bH^k(\Omega^{\calT}_h)$ and $\vh\in \bV_h$,
\begin{equation}\label{eqn:ConsistEst}
\sum_{F\in \calF_h^{\rm Al,gp}} \int_F \left|\nabla \bw: [\vh]\right| \lesssim h^{k+m-1} \|\bw\|_{H^k(\Omega_h^{\calT})} \|\vh\|_{H^m(\Omega_h^{\calT})}\qquad m=0,1,\ldots,k,
\end{equation}
where $[\cdot]$ denotes the standard jump operator and 
$\|\vh\|_{H^m(\Omega_h^{\calT})}$ 
is understood to be the piecewise $H^m$ norm of $\vh$
with respect to $\calT_h^{\rm Al}$.
\end{lem}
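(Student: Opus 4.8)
The quantity to control is a sum of edge integrals $\int_F \nabla\bw:[\vh]$, where $[\vh]$ is the ordinary jump of the (element-wise) Piola-mapped velocity across interior facets of the Alfeld-split mesh. The key structural fact, inherited from \cite{NO21,DN24}, is that although $\bV_h\not\subset\bH^1$, the tangential component of $\vh$ is continuous across edges: because $\vh\in\bH(\mathrm{div})$ the normal component is already continuous, and the choice of $\bPhi_h$-mapped Gauss--Lobatto edge degrees of freedom forces the tangential trace of $\vh$ on each edge $F$ to be the degree-$k$ polynomial interpolating those $k+1$ nodal values, hence single-valued. Consequently $[\vh]$ itself is \emph{not} zero only because of the deviation of $\bPhi_h$ from an affine map on the curved elements: on any edge $F\in\calF_h^{\rm Al,gp}$ that touches $\Omega_h^\Gamma$, the jump $[\vh]$ is $O(h^{k})$-small in a suitable sense (scaling like the geometric error $\|\bn-\bn_h\|$ together with the Jacobian mismatch $1/J_K$ across $F$), while on all edges away from $\Omega_h^\Gamma$, where $\bPhi_h$ is the identity, $[\vh]=0$ and the term drops out entirely.

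First I would record this localization: only edges $F$ with $\bar\omega_F\cap\Omega_h^\Gamma\neq\emptyset$ contribute, and there are $O(h^{-(d-1)})$ of them, concentrated in the $O(h)$ boundary band. Second, on such an edge, I would estimate $\|[\vh]\|_{L^2(F)}$ by writing $[\vh]|_F = (\vh|_{K_1} - \mathcal{E}^{\mathcal R}(\vh\circ\bPhi_h|_{\tilde K_2})\circ\bPhi_h^{-1})|_F$ plus the analogous difference of Piola-pullbacks, and bounding it through the discrepancy between $\frac{1}{J_{K_1}}F_{K_1}$ and $\frac{1}{J_{K_2}}F_{K_2}$ along $F$. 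Using the $W^{k+1,\infty}$-bounds on $\bPhi_h$ from \cite{Leh16} (equivalently the estimates leading to \eqref{GeomError}), the difference of the two Piola matrices and of $1/J_{K_i}$ on $F$ is $O(h^{k})$ in $L^\infty$, and an inverse/trace estimate gives $\|[\vh]\|_{L^2(F)}\lesssim h^{k-1/2}\|\vh\|_{L^2(\omega_F)}$; more generally, an element-wise inverse estimate against $\|\vh\|_{H^m(\omega_F)}$ recovers the $h^{k+m-1/2}$ scaling one needs after the $\|\bw\|_{H^k}$ trace factor is absorbed. Third, I would handle the $\nabla\bw$ factor by the standard unfitted trace inequality \eqref{eqn.unfitted-trace}, namely $\|\nabla\bw\|_{L^2(F)}\lesssim h^{-1/2}\|\nabla\bw\|_{L^2(K)}+h^{1/2}\|\nabla\bw\|_{H^1(K)}\lesssim h^{-1/2}\|\bw\|_{H^k(K)}$ (using $k\ge1$), Cauchy--Schwarz on each edge, and then Cauchy--Schwarz over the $O(1)$-overlapping edge patches in the band; the half-powers of $h$ from the two trace estimates combine with $h^{k+m-1/2}$ to yield exactly $h^{k+m-1}$, and the finite-overlap property of $\{\omega_F\}$ turns the sum into the global piecewise norms $\|\bw\|_{H^k(\Omega_h^{\calT})}\|\vh\|_{H^m(\Omega_h^{\calT})}$.

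The main obstacle is the second step: making precise, in the curved two-dimensional Alfeld-split setting, that the jump of the element-wise Piola-mapped function across an interior edge is $O(h^k)$ rather than merely $O(h)$. This requires exploiting that the $\bPhi_h$-mapped Gauss--Lobatto edge nodal values are matched between the two elements sharing $F$, so the jump comes purely from the \emph{multiplicative} Piola prefactor $\frac{1}{J_K}F_K$, whose tangential variation along $F$ inherits the full $h^k$ geometric accuracy of $\bPhi_h$; this is exactly the kind of technical argument carried out in \cite{DN24,NO21} for $d=2$, which is why the statement is restricted to the plane. I would cite those works for the underlying Piola-jump estimate and supply the trace/inverse-inequality bookkeeping above to assemble \eqref{eqn:ConsistEst}.
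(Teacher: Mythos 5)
The paper's own proof of this lemma is a one-line reduction to \cite[Lemma 4.7]{DN24}, and your proposal ultimately rests on the same citation ("I would cite those works for the underlying Piola-jump estimate"), so in spirit the two proofs coincide. However, the self-contained bookkeeping you wrap around that citation contains one step that is genuinely wrong and would matter if the cited lemma supplied only a pointwise or $L^2$-based jump bound: you claim that from $\|[\vh]\|_{L^2(F)}\lesssim h^{k-1/2}\|\vh\|_{L^2(\omega_F)}$ "an element-wise inverse estimate against $\|\vh\|_{H^m(\omega_F)}$ recovers the $h^{k+m-1/2}$ scaling." Inverse estimates go in the opposite direction ($\|\vh\|_{H^m}\lesssim h^{-m}\|\vh\|_{L^2}$, not $\|\vh\|_{L^2}\lesssim h^{m}\|\vh\|_{H^m}$), so the $m\ge 1$ cases of \eqref{eqn:ConsistEst} cannot be deduced this way; for $\vh$ with small $H^m$-seminorm but $O(1)$ $L^2$-norm the claimed upgrade fails. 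The extra factor $h^{m}$ is precisely the nontrivial content of \cite[Lemma 4.7]{DN24}: it comes from the fact that $[\vh]$ vanishes at the $k+1$ mapped Gauss--Lobatto nodes of $F$, so that the facet integral can be treated as a Gauss--Lobatto quadrature error and bounded by derivatives of the pulled-back data (a Bramble--Hilbert-type argument on the reference edge), not by an inverse inequality. Relatedly, your opening assertion that the Gauss--Lobatto degrees of freedom make the tangential trace "single-valued" is true only on affine elements; on the curved elements the two traces are non-polynomial along $F$ and merely agree at the nodes, which is exactly why $[\vh]\neq 0$ and why the lemma is needed. Since you do defer the key estimate to \cite{DN24,NO21}, the proof stands as a citation, matching the paper; but the derivation you sketch around it should not be presented as an independent route to the $m\ge 1$ cases.
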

\begin{proof}
In light of the assumptions on $\bPhi_h$, this result directly follows from \cite[Lemma 4.7]{DN24}.
\end{proof}
{
\begin{rmk}
We note that Lemma~\ref{lem:Consist} is the only point in the analysis that restricts the error estimates to the two-dimensional setting. If this consistency result also holds in three dimensions, then the error analysis developed below extends directly to that case. However, the proof of Lemma~\ref{lem:Consist} relies crucially on high-order estimates for Gauss--Lobatto quadrature rules; see \cite[Lemma 4.7]{DN24}. Extending these arguments to three dimensions is not straightforward, due to the lack of an analogous Gauss--Lobatto quadrature rule on two-dimensional faces \cite{Helenbrook09}.
\end{rmk}
}

\begin{lem}\label{lem:Consist2}
Suppose  the assumptions in Lemma~\ref{lem:Consist}
are satisfied. Let $\bu^e$ be the divergence-free
extension of the velocity solution, and let $\pi_V \bu^e\in \bV_h$ be
its approximation given in Lemma~\ref{lem:Fortin}.
Then there holds
\begin{align}\label{eqn:Consistent2}
\left|(a_h+i_h)(\pi_V \bu^e,\vh) +\int_{\Omega_h} \Delta \bu^e\cdot \bv_h \right|
&\lesssim h^k \|\bu\|_{H^{k+1}(\Omega)} \tnrma{\vh}\qquad \forall \vh\in \Vh.
\end{align}
\end{lem}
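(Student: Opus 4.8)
The plan is to split the target expression into a consistency/integration-by-parts error for the exact Laplacian, a ghost-penalty term, and the geometric boundary errors, then bound each piece separately. First I would write, for a generic sufficiently smooth $\bw$ and $\vh\in\bV_h$, the elementwise integration by parts on each $K\in\calT_h^{\rm Al}$:
\[
\sum_{K}\int_K \nabla\bw:\nabla\vh\,\dif\xb
= -\int_{\Omega_h^{\calT}}\Delta\bw\cdot\vh\,\dif\xb
+ \sum_{F\in\calF_h^{\rm Al}}\int_F (\nabla\bw\,\bn_F)\cdot[\vh]\,\dif s
+ \int_{\p\Omega_h^{\calT}}(\nabla\bw\,\bn_h)\cdot\vh\,\dif s,
\]
where on interior facets the jump $[\vh]$ is the usual one and is nonzero only on facets in $\calF_h^{\rm Al,gp}$, since $\bV_h$ is continuous away from the curved region. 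Applied with $\bw=\bu^e$, the facet sum is controlled by Lemma \ref{lem:Consist} (with $m=1$), giving the bound $h^k\|\bu^e\|_{H^k}\|\vh\|_{H^1(\Omega_h^{\calT})}\lesssim h^k\|\bu\|_{H^{k+1}(\Omega)}\tnrma{\vh}$ after noting $\|\vh\|_{H^1(\Omega_h^{\calT})}\lesssim\sigma^{-1/2}\tnrma{\vh}$; then I would trade the $\sigma^{-1/2}$ for one half power of $h^k$ if necessary, or more simply absorb $\sigma$ into the hidden constant since $\sigma$ is fixed. The term $\int_{\Omega_h}\Delta\bu^e\cdot\vh$ versus $\int_{\Omega_h^{\calT}}\Delta\bu^e\cdot\vh$ differs only over the (narrow, $O(h)$) region $\Omega_h^{\calT}\setminus\Omega_h$, and there $\Delta\bu^e$ is bounded while $\|\vh\|_{L^2(\Omega_h^{\calT}\setminus\Omega_h)}\lesssim h^{1/2}\|\vh\|_{\Gamma_h}+h\|\nabla\vh\|$, which is more than enough.

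Next I would handle the Nitsche terms in $a_h$. Using the integral identity \eqref{intIdent1} with $\bv=\vh$, $q=0$, $\mu=0$, $\gamma=\gamma_n/h$ on the \emph{exact} geometry $\Omega,\Gamma$ gives an identity relating $\int_\Omega\nabla\bu^e:\nabla\vh$, the boundary integrals over $\Gamma$, and $\int_\Omega\fb\cdot\vh$ (plus the $\lambda$ and pressure terms, which pair against test functions we are not using here). The difference between $a_h(\pi_V\bu^e,\vh)$ and the exact analogue splits into: (i) $a_h(\pi_V\bu^e-\bu^e,\vh)$, estimated by the approximation property $\tnrma{\bu^e-\pi_V\bu^e}\lesssim h^k\|\bu\|_{H^{k+1}}$ from Lemma \ref{lem:Fortin} together with continuity of $a_h$ in the $\tnrma{\cdot}$-norm (here the unfitted trace inequality \eqref{eqn.unfitted-trace} and the bound \eqref{aux625}-type control on the Nitsche term are used, and crucially $\|(\bn\cdot\nabla)(\vh\cdot\bn)\|_{\Omega_h^\Gamma}$ plus $h^{-1/2}\|\vh\|_{\Gamma_h}$ control the two Nitsche consistency pairings); and (ii) the geometric mismatch between integrals over $\Gamma_h,\Omega_h$ and over $\Gamma,\Omega$, which by \eqref{GeomError} ($\dist(\Gamma,\Gamma_h)\lesssim h^{k+1}$, $\|\bn-\bn_h\|_{L^\infty}\lesssim h^k$) and standard curved-geometry change-of-variables estimates contributes at most $h^k$ times the relevant norms of $\bu^e$ against $\tnrma{\vh}$. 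This geometric-consistency bookkeeping is the part I expect to be the main obstacle: one must carefully track which power of $h$ each geometric error carries against which (possibly $h^{-1}$-scaled) term in $a_h$, and confirm the worst case is still $O(h^k)$; the narrow-band term $\|(\bn\cdot\nabla)(\vh\cdot\bn)\|_{\Omega_h^\Gamma}^2$ in $\tnrma{\vh}^2$ is precisely what makes the $\int_\Gamma((\bn\cdot\nabla)\vh)\cdot\bu^e$-type pairing controllable.

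Finally, the ghost-penalty term: since $\bu^e\in\bH^{k+1}(\Omega_h^{\calT})$, estimate \eqref{aux705} in Lemma \ref{LAux} gives $i_h(\pi_V\bu^e,\vh)\le i_h(\pi_V\bu^e,\pi_V\bu^e)^{1/2} i_h(\vh,\vh)^{1/2}$, and $i_h(\pi_V\bu^e,\pi_V\bu^e)^{1/2}\lesssim i_h(\bu^e,\bu^e)^{1/2}+i_h(\bu^e-\pi_V\bu^e,\bu^e-\pi_V\bu^e)^{1/2}\lesssim h^k\|\bu^e\|_{H^{k+1}(\Omega_h^{\calT})}$ by \eqref{aux705}-\eqref{aux707}, while $i_h(\vh,\vh)^{1/2}\lesssim \|\nabla\vh\|_{\Omega_h^{\calT}}\lesssim\sigma^{-1/2}\tnrma{\vh}$ via \eqref{aux702}/\eqref{ExtFE}. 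Summing all contributions, using the Stein extension bound $\|\bu^e\|_{H^{k+1}(\Omega_h^{\calT})}\lesssim\|\bu\|_{H^{k+1}(\Omega)}$, and absorbing the fixed $\sigma$ into the constant yields \eqref{eqn:Consistent2}. I would organize the write-up as: (1) the exact identity from \eqref{intIdent1}, (2) interpolation error $\tnrma{\bu^e-\pi_V\bu^e}$, (3) non-conformity facet error via Lemma \ref{lem:Consist}, (4) geometric errors via \eqref{GeomError}, (5) ghost penalty via Lemma \ref{LAux}, then combine.
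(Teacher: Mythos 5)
Your inventory of ingredients is largely correct --- Lemma \ref{lem:Consist} for the facet jumps, Lemma \ref{lem:Fortin} for $\tnrma{\bu^e-\pi_V\bu^e}$, and \eqref{aux705}--\eqref{aux707} for the ghost-penalty term (that last part matches the paper's proof exactly) --- but two steps of your main argument would not go through as written. First, you integrate by parts over the \emph{full} elements of $\calT_h^{\rm Al}$, producing $\int_{\OT}\Delta\bu^e\cdot\vh$ and a boundary term on $\p\OT$, whereas the volume term of $a_h$ and the Laplacian term in \eqref{eqn:Consistent2} live on $\Omega_h$. The corrections you would then need, namely $\int_{\OT\setminus\Omega_h}\nab\bu^e:\nab\vh$ (which you never mention) and $\int_{\OT\setminus\Omega_h}\Delta\bu^e\cdot\vh$ (which you claim is ``more than enough''), are only $O(h^{1/2})\tnrma{\vh}$ and $O(h^{3/2})\tnrma{\vh}$ by narrow-band estimates, since $\nab\bu^e$ and $\Delta\bu^e$ do not vanish near $\Gamma$; for $k\ge 2$ this destroys the claimed $O(h^k)$ rate. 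The paper instead integrates by parts elementwise over $K\cap\Omega_h$, which directly yields the boundary integral $\int_{\Gh}(\bn_h\cdot\nab\bu^e)\cdot\vh$ (cancelling against the Nitsche consistency term of $a_h$) plus facet contributions $\sum_K\int_{\p K\cap\Omega_h}$, which are then dominated by the full-facet sum handled in Lemma \ref{lem:Consist}.

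Second, the piece you yourself flag as ``the main obstacle'' is exactly where the essential content of the lemma lies, and the mechanism you propose for it is not the right one. The $h^{-1}$-scaled penalty term $\frac{\gamma_n}{h}\int_{\Gh}\pi_V\bu^e\cdot\vh$ and the adjoint term $\int_{\Gh}(\bn_h\cdot\nab\vh)\cdot\pi_V\bu^e$ are controlled not by the narrow-band component $\|(\nb\cdot\nabla)(\vh\cdot\bn)\|_{\Omega_h^\Gamma}$ of $\tnrma{\vh}$ (that term is used in the Lagrange-multiplier inf-sup analysis, not here), but by the observation that $\bu^e$ nearly vanishes on $\Gamma_h$: since $\bu=0$ on $\Gamma$ and $\dist(\Gamma,\Gh)\lesssim h^{k+1}$ by \eqref{GeomError}, one has $h^{-1/2}\|\bu^e\|_{\Gh}=h^{-1/2}\|\bu^e-\bu^e\circ\bp_\Gamma\|_{\Gh}\lesssim h^{k+1/2}\|\bu\|_{H^{k+1}(\Omega)}$, which is then combined with the interpolation bound for $\|\pi_V\bu^e-\bu^e\|_{\Gh}$ and a discrete trace inequality for $\nab\vh$. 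Without isolating this estimate the worst-case power of $h$ in your geometric bookkeeping is $O(1)$ for the penalty term, not $O(h^k)$, so the proposal is incomplete at its central point.
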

\begin{proof}
First, by \eqref{aux705}--\eqref{aux707}, there
holds 
\[ i_h(\pi_V \bu^e,\bv_h)\lesssim h^k \|\bu^e\|_{H^{k+1}(\Omega_h^{\calT})}\tnrma{\vh}\lesssim h^k \|\bu\|_{H^{k+1}(\Omega)}\tnrma{\vh}.
\]

Next, we integrate by parts 
to obtain
\begin{align*}
&a_h(\pi_V \bu^e,\bv_h)+\int_{\Omega_h} \Delta \bu^e\cdot \bv_h\\
&\qquad = a_h(\pi_V\bu^e,\vh)-\int_{\Omega_h} \nab \bu^e:\nab \bv_h
+\int_{\Gamma_h} (\bn_h \cdot \nab \bu^e)\cdot \vh+\sum_{K\in \calT^{\rm Al}_h} \int_{\p K\cap \Omega_h} (\bn_{\p K} \cdot \nab \bu^e)\cdot \vh\\
&\qquad \le \int_{\Omega_h} \nab (\pi_V \bu^e-\bu^e):\nab \vh
+\int_{\Gamma_h} (\bn_h \cdot \nab (\bu^e-\pi_V \bu^e))\cdot \vh-\int_{\Gamma_h}(\bn_h\cdot \nab \vh)\cdot \pi_V \bu^e\\
&\qquad\qquad+ \frac{\gamma_n}{h} \int_{\Gamma_h} \pi_V \bu^e\cdot \vh
+\sum_{F\in \Fhogp} \int_F \left|\nabla \bu^e: [\vh]\right|.
\end{align*}
Denote by $\bp_\Gamma$ the closest point projection on $\Gamma$. Then, we have 
\[
h^{-1/2} \|\bu^e\|_{\Gamma_h}=h^{-1/2} \|\bu^e-\bu^e\circ\bp_\Gamma\|_{\Gamma_h}\lesssim h^{k+\frac12} \|\bu\|_{H^{k+1}(\Omega)},
\]
where  the last inequality follows from \cite[Lemma~7.3]{gross2015trace}.
We then apply standard trace inequalities, Lemma~\ref{lem:Fortin}, Lemma
\ref{lem:Consist}, and the above estimate to conclude \eqref{eqn:Consistent2}.
\end{proof}

Using Lemma~\ref{lem:Consist2} and arguments analogous to those
\cite[Theorem 5.2]{BHL24}, we derive
error estimates in the energy norm.
However, unlike the setting in \cite{BHL24}, we have $\nab \cdot \bV_h\not\subset Q_h$ and $\Gamma\neq\Gamma_h$. Consequently, 
 the arguments in \cite[Theorem 5.2]{BHL24} cannot be applied verbatim
 and must be adapted to account for these structural differences.
In particular, we introduce a weighted
$L^2$-projection to ensure a commuting property with respect
to the bilinear form $b_h(\cdot,\cdot)$.
We set $\pi_Q:L^2(\Omega^{\calT}_h)\to Q_h^0$ such that
\begin{equation}\label{eqn:piQ}
\sum_{K\in \calT_h^{\rm Al}} \int_K \frac{1}{J_K\circ \varphi_K^{-1}} (\pi_Q p)\qh\dif\xb = 
\sum_{K\in \calT_h^{\rm Al}} \int_K \frac{1}{J_K\circ \varphi_K^{-1}}  p\qh \dif\xb\qquad \forall \qh\in Q_h,
\end{equation}
where we recall $J_K = \det(D\varphi_K)$. Because $J_K\approx h^d$ (cf.~\eqref{eqn:varphiKBounds}), $\pi_Q$ is well defined.

If $\bv\in \bV_h$ with $\bv|_K = \frac1{J_K} F_K \hat \bv \circ \varphi_K^{-1}$
(with $\bv\in [\bbP_k(\hat K)]^d$), then $\nab \cdot \bv|_K = \frac1{J_K} \hat \nab \cdot \hat \bv\circ \varphi_K^{-1}$. Consequently, it follows from the definitions of $b_h(\cdot,\cdot)$ and $Q_h$ and \eqref{eqn:piQ} that
\begin{equation}\label{eqn:QL2}
b_h(\pi_Q p,\vh) = b_h(p,\vh)\qquad \forall \vh\in \bV_h.
\end{equation}
We also set $\pi_\Sigma:C(\Omega_h^\Gamma)\to \Sigma_h$ to be the Lagrange interpolant into $\Sigma_h$.

\def\eps{\varepsilon}
Recall that $\bu^e$ is a divergence-free extension
of the velocity and that $p^z$ is
the zero extension of $p$. To 
derive error estimates, we also let $p^e$ 
denote a smooth extension of $p$,
and then set $\bar p^e$ such that $\bar p^e|_{\overline{\Omega}_h} = p^e|_{\overline{\Omega}_h}$ and 
$\bar p^e=0$ on $\bbR^d\backslash \overline{\Omega}_h$.
For $\lambda= p |_{\Gamma}$
we  extend it to $\lambda^e:\Omega_h^\Gamma\to\RR$ via a normal extension.
Finally, we set $\fb^e = -\Delta \bu + \nab \bar p^e$.

Because $\Gamma$ is smooth, the following estimate for the Sobolev norms of the normal extension is well-known; see, e.g.,~\cite[Lemma~3.1]{reusken2015analysis}:
\begin{equation}\label{aux1049}
\|\lambda^e\|_{H^m(\Omega_h^\Gamma)}\lesssim h^{\frac12} 
\|\lambda\|_{H^m(\Gamma)} = h^{\frac12} 
\|p\|_{H^m(\Gamma)},
\end{equation}
for any fixed integer $m\ge0$ such that the norms above make sense.
\begin{thm}\label{thm:Main}
Assume that $\Gamma$ is smooth
and that the assumptions in Lemma~\ref{lem:Consist} are satisfied.
There holds
\begin{multline}\label{eqn:ThmEstimate}
\tnrma{(\pi_V \bu^e-\bu_h,\pi_Q \bar p^e- p_h,\pi_\Sigma \lambda^e - \lambda_h)}\\
 \lesssim h^k\|\bu\|_{H^{k+1}(\Omega)}+h^{k_\lambda+1} \|p\|_{H^{k_\lambda+2}(\Omega)}+\|\fb^e-\fb_h\|_{V_h^*},
\end{multline}
where $\|\fb\|_{V_h^*} = \sup_{\vh\in \Vh} (\fb,\vh)_{\Omega_h}/\tnrma{\vh}$,
and we recall that $k_\lambda\in \{k-1,k\}$.
\end{thm}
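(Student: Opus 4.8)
The plan is to obtain \eqref{eqn:ThmEstimate} from the global inf--sup stability of Theorem~\ref{mainStab}, applied to the difference between the discrete solution and the prescribed projections. Since $\pi_Q\bar p^e$ has vanishing mean and $\pi_\Sigma$ is the Lagrange interpolant into $\Sh$, the triple $(\pi_V\ue-\uh,\,\pi_Q\bar p^e-\ph,\,\pi_\Sigma\lambda^e-\lambda_h)$ lies in $\Vh\times\Qh^0\times\Sh$. Theorem~\ref{mainStab}, together with bilinearity of $A_h$ and the discrete equations \eqref{eqn.discr-prob}, then bounds its $\tnrma{\cdot}$-norm by the supremum over nonzero $(\vh,\qh,\mh)$ of $R(\vh,\qh,\mh)/\tnrma{(\vh,\qh,\mh)}$, where $R(\vh,\qh,\mh):=A_h\big((\pi_V\ue,\pi_Q\bar p^e,\pi_\Sigma\lambda^e),(\vh,\qh,\mh)\big)-\fb_h(\vh)$ is the consistency residual. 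It thus remains to show that $|R(\vh,\qh,\mh)|$ is controlled by $\big(h^k\|\bu\|_{H^{k+1}(\Omega)}+h^{k_\lambda+1}\|p\|_{H^{k_\lambda+2}(\Omega)}+\|\fb^e-\fb_h\|_{V_h^*}\big)\tnrma{(\vh,\qh,\mh)}$.

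To estimate $R$ I would expand it into its constituent forms and use three structural facts. First, since $\ue$ is divergence-free, Lemma~\ref{lem:Fortin} gives $b_h(\qh,\pi_V\ue)=b_h(\qh,\ue)=0$. Second, Lemma~\ref{lem:Consist2} gives $(a_h+i_h)(\pi_V\ue,\vh)=-\int_{\Oh}\Delta\ue\cdot\vh\dif\xb+E$ with $|E|\lesssim h^k\|\bu\|_{H^{k+1}(\Omega)}\tnrma{\vh}$; substituting $-\Delta\ue=\fb^e-\nabla\bar p^e$, integrating by parts over each $K\cap\Oh$ with $K\in\Tho$, and using that $\vh\in\bH({\rm div};\OT)$ while $p^e$ is continuous (so that the interelement facet contributions cancel), I obtain $(a_h+i_h)(\pi_V\ue,\vh)=(\fb^e,\vh)_{\Oh}+\int_{\Oh}p^e\,\nabla\cdot\vh\dif\xb-\int_{\Gh}p^e(\vh\cdot\nh)\dif s+E$. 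Third, by \eqref{eqn:QL2} and $\bar p^e=p^e$ on $\Oh$, $b_h(\pi_Q\bar p^e,\vh)=-\int_{\Oh}p^e\,\nabla\cdot\vh\dif\xb$, which cancels the volume pressure term. Writing $c_h(\pi_\Sigma\lambda^e,\vh)=\int_{\Gh}\pi_\Sigma\lambda^e(\nh\cdot\vh)\dif s$ and collecting, the residual reduces to $R(\vh,\qh,\mh)=(\fb^e-\fb_h,\vh)_{\Oh}+\int_{\Gh}(\pi_\Sigma\lambda^e-p^e)(\nh\cdot\vh)\dif s+c_h(\mh,\pi_V\ue)+j_h(\pi_\Sigma\lambda^e,\mh)+E$.

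It then remains to bound the remaining terms, all of which are geometric/interpolation consistency errors. The forcing term is at most $\|\fb^e-\fb_h\|_{V_h^*}\tnrma{\vh}$ by definition of $V_h^*$. For the boundary term I split $\pi_\Sigma\lambda^e-p^e=(\pi_\Sigma\lambda^e-\lambda^e)+(\lambda^e-p^e)$: the unfitted trace inequality \eqref{eqn.unfitted-trace}, the interpolation error of $\pi_\Sigma$, and the normal-extension bound \eqref{aux1049} give $\|\pi_\Sigma\lambda^e-\lambda^e\|_{\Gh}\lesssim h^{k_\lambda+1}\|p\|_{H^{k_\lambda+2}(\Omega)}$, while $(\lambda^e-p^e)|_\Gamma=0$ and $\dist(\Gamma,\Gh)\lesssim h^{k+1}$ from \eqref{GeomError} give $\|\lambda^e-p^e\|_{\Gh}\lesssim h^{k+1}\|p\|_{H^{k_\lambda+2}(\Omega)}$; combined with $\|\vh\|_{\Gh}\lesssim h^{1/2}\tnrma{\vh}$ and $k_\lambda\le k$ this term is $\lesssim h^{k_\lambda+1}\|p\|_{H^{k_\lambda+2}(\Omega)}\tnrma{\vh}$. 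For $c_h(\mh,\pi_V\ue)$ I write $\pi_V\ue=\ue+(\pi_V\ue-\ue)$: since $\ue|_\Gamma=0$, a trace-shift estimate as in the proof of Lemma~\ref{lem:Consist2} gives $\|\ue\|_{\Gh}\lesssim h^{k+1}\|\bu\|_{H^{k+1}(\Omega)}$, and Lemmas~\ref{lem:Interp} and \ref{lem:Fortin} with \eqref{eqn.unfitted-trace} give $\|\pi_V\ue-\ue\|_{\Gh}\lesssim h^{k+1/2}\|\bu\|_{H^{k+1}(\Omega)}$; with $\|\mh\|_{\Gh}\lesssim h^{-1/2}\tnrma{\mh}$ this yields $\lesssim h^k\|\bu\|_{H^{k+1}(\Omega)}\tnrma{\mh}$. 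Finally, because $\lambda^e$ is a normal extension we have $\bn\cdot\nabla\lambda^e\equiv0$, so $\|\nh\cdot\nabla\pi_\Sigma\lambda^e\|_{\Omega_h^\Gamma}\lesssim\|\nabla(\pi_\Sigma\lambda^e-\lambda^e)\|_{\Omega_h^\Gamma}+\|\bn-\nh\|_{L^\infty(\Gh)}\|\nabla\lambda^e\|_{\Omega_h^\Gamma}\lesssim h^{k_\lambda+1/2}\|p\|_{H^{k_\lambda+2}(\Omega)}$ by \eqref{GeomError} and \eqref{aux1049}, whence $j_h(\pi_\Sigma\lambda^e,\pi_\Sigma\lambda^e)^{1/2}\lesssim h^{k_\lambda+1}\|p\|_{H^{k_\lambda+2}(\Omega)}$ and $j_h(\pi_\Sigma\lambda^e,\mh)\le j_h(\pi_\Sigma\lambda^e,\pi_\Sigma\lambda^e)^{1/2}j_h(\mh,\mh)^{1/2}\lesssim h^{k_\lambda+1}\|p\|_{H^{k_\lambda+2}(\Omega)}\tnrma{\mh}$. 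Summing all of these, dividing by $\tnrma{(\vh,\qh,\mh)}$, and invoking Theorem~\ref{mainStab} gives \eqref{eqn:ThmEstimate}.

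The hard part is the geometric and nonconformity bookkeeping. The integration-by-parts step relies crucially on $\bH({\rm div})$-conformity of $\Vh$ to discard the interelement facet terms; no analogous cancellation is available for the conforming pressure space or for the $H^1$-nonconforming part of $\Vh$, which is precisely why the consistency estimate of Lemma~\ref{lem:Consist2} is needed. Beyond that, every boundary integral has to be transported between $\Gamma$ and $\Gh$ (at cost $O(h^{k+1})$) and between the various extensions $\ue$, $p^e$, $\bar p^e$, $\lambda^e$, which forces a careful and consistent use of narrow-band, trace, and interpolation estimates compatible with the $\sigma$-weighted norm $\tnrma{\cdot}$, all while keeping the constants independent of the cut configuration (though they may depend on the fixed parameter $\sigma$).
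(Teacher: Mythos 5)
Your proposal is correct and follows essentially the same route as the paper: global inf--sup stability reduces everything to the consistency residual, the kernel property of $\pi_V\bu^e-\bu_h$ together with \eqref{eqn:QL2} eliminates the $b_h$ contributions, Lemma~\ref{lem:Consist2} handles the nonconforming diffusion term, and the remaining $c_h$ and $j_h$ terms are bounded by exactly the interpolation, normal-extension, and geometric estimates the paper uses in \eqref{aux1116}--\eqref{eqn:ThmEndEst}. The only cosmetic difference is that you carry a general test triple and integrate $\nabla\bar p^e$ by parts explicitly, whereas the paper fixes $q_h=0$ from the start and identifies $b_h(\pi_Q\bar p^e,\vh)-\int_{\Omega_h}\nabla\bar p^e\cdot\vh$ directly as $-c_h(\bar p^e,\vh)$; the underlying computation is identical.
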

\begin{proof}
Set
\[
{\bm e}_h = \pi_V \bu^e-\bu_h,\qquad \omega_h = \pi_Q {\bar p^e}- p_h,\qquad \eta_h = \pi_\Sigma \lambda^e-\lambda_h.
\]
Using the inf-sup stability property \eqref{eqn:BigInfSup}
and the fact that ${\bm e}_h$ is in the kernel of $b_h(\cdot,\cdot)$, there exists $(\bv_h,\mu_h)\in \bV_h\times \Sigma_h$
such that $\tnrma{(\bv_h,0,\mu_h)}=1$ and 
$\tnrma{({\bm e}_h,\omega_h,\eta_h)}\lesssim A_h(({\bm e}_h,\omega_h,\eta_h),(\bv_h,0,\mu_h))$. Applying the definition of the finite element method,
 \eqref{eqn:QL2}, then followed by Lemma~\ref{lem:Fortin},
 \eqref{eqn:InterpEstimate2}, and \eqref{eqn:Consistent2} yields
\begin{equation}\label{eqn:ThmStart}
\begin{aligned}
\tnrma{({\bm e}_h,\omega_h,\eta_h)}
&\lesssim A_h(\pi_V \bu^e,\pi_Q \bar p^e,\pi_h \lambda^e),
(\bv_h,0,\mu_h))
- \int_{\Omega_h}\!\! \fb^e\cdot \bv_h\dif\xb + \int_{\Omega_h}\!\!(\fb^e-\fb_h)\cdot \bv_h\dif\xb\\
& = (a_h+i_h)(\pi_V \bu^e,\bv_h)+\int_{\Omega_h} \Delta \bu^e \cdot \bv_h\dif\xb\\
&\qquad + b_h(\pi_Q \bar p^e,\bv_h) - \int_{\Omega_h} \nab \bar p^e\cdot \bv_h\dif\xb
+ \int_{\Omega_h} (\fb^e-\fb_h)\cdot \bv_h\dif\xb\\
&\qquad \qquad+c_h(\pi_\Sigma \lambda^e,\bv_h)+c_h(\mu_h,\pi_V \bu^e) + j_h(\pi_\Sigma \lambda^e,\mu_h)\\
& = \left((a_h+i_h)(\pi_V \bu^e,\bv_h)+\int_{\Omega_h} \Delta \bu^e \cdot \bv_h\dif\xb\right) + \int_{\Omega_h} (\fb^e-\fb_h)\cdot \bv_h\dif\xb
\\
&\qquad \qquad +c_h(\pi_\Sigma \lambda^e-\bar p^e,\bv_h)+c_h(\mu_h,\pi_V \bu^e) + j_h(\pi_\Sigma \lambda^e,\mu_h)\\
&\lesssim h^k \|\bu\|_{H^{k+1}(\Omega)} +\|\fb^e-\fb_h\|_{V_h^*}\\
&\qquad\qquad+c_h(\pi_\Sigma \lambda^e-{\bar p^e},\bv_h)+c_h(\mu_h,\pi_V \bu^e)+ j_h(\pi_\Sigma \lambda^e,\mu_h).
\end{aligned}
\end{equation}

Applying standard approximation properties
of the Lagrange interpolant,
 the estimate $|\bn-\bn_h|\lesssim h^k$ and
\eqref{aux1049}  gets
\begin{equation}\label{aux1116}
\begin{split}
j_h(\pi_\Sigma \lambda^e,\mh)&=
j_h(\pi_\Sigma \lambda^e - \lambda^e,\mh)\\
& =h(\bn_h\cdot \nab (\pi_\Sigma \lambda^e-\lambda^e),\bn_h\cdot \nab \mu_h)_{\Omega_h^\Gamma} + h((\bn_h-\bn)\cdot \nab \lambda^e,\bn_h\cdot \nab \mu_h)_{\Omega_h^\Gamma}\\
&\lesssim h^{\frac12}\left(\|\bn_h\cdot \nab (\pi_\Sigma \lambda^e-\lambda^e)\|_{\Omega_h^\Gamma} + h^k\| \nab \lambda^e\|_{\Omega_h^\Gamma}\right)\tnrma{\mu_h}\\
&\lesssim (h^{k_\lambda+\frac12}+h^{k+\frac12})\|\lambda^e\|_{H^{k_\lambda+1}(\Omega_h^\Gamma)}\\
&\lesssim (h^{k_\lambda+1}+h^{k+1}) \|p\|_{H^{k_\lambda+1}(\Gamma)} \lesssim h^{k_\lambda+1}\|p\|_{H^{k_\lambda+1}(\Gamma)}.
\end{split}
\end{equation}
We now split $c_h(\pi_\Sigma \lambda^e-{\bar p^e},\bv_h)=c_h(\pi_\Sigma \lambda^e-\lambda^e,\bv_h)-c_h(\lambda^e-\bar p^e,\bv_h)$.
For the first term, we have
\begin{equation}\label{aux1140}
c_h(\pi_\Sigma \lambda^e - \lambda^e,\vh)
\le h^{\frac12} \|\pi_\Sigma \lambda^e -\lambda^e \|_{\Gamma_h} \tnrma{\vh}
\lesssim h^{\frac12}h^{k_{\lambda}+\frac12}\|\lambda^e\|_{H^{k_\lambda+1}
(\Omega_h^\Gamma)} \lesssim h^{k_{\lambda}+\frac32}
\|p\|_{H^{k_\lambda+1}(\Gamma)}.
\end{equation}

The second term is bounded as follows:
\begin{equation}\label{aux1145}
c_h(\lambda^e-p^e,\bv_h)
\le h^{\frac12} \|\lambda^e-\bar p^e \|_{\Gamma_h} \tnrma{\vh} 
\lesssim h^{\frac12}\|\lambda^e-\bar p^e \|_{\Gamma_h} =h^{\frac12}\|p\circ \bp_\Gamma-p^e \|_{\Gamma_h}
\lesssim h^{k+\frac32}\|p\|_{H^2(\Omega)}.
\end{equation}
Above $\bp_\Gamma$ is the closest point projection on $\Gamma$ and for the last inequality we used the result from \cite[Lemma~7.3]{gross2015trace}.  

Likewise, we bound
\begin{equation}\label{eqn:ThmEndEst}
c_h(\mu_h,\pi_V \bu^e)\le h^{-\frac12}\|\pi_V \bu^e\|_{\Gamma_h} \tnrma{\mu_h}
\le (\tnrma{\bu^e - \pi_V \bu^e}+h^{-\frac12} \|\bu^e\|_{\Gamma_h})\lesssim h^k \|\bu\|_{H^{k+1}(\Omega)}.
\end{equation}
Combining \eqref{eqn:ThmStart}--\eqref{eqn:ThmEndEst}
yields the desired result \eqref{eqn:ThmEstimate}.\end{proof}

The right-hand side perturbation term $\|\fb^e-\fb_h\|_{V_h^*}$ in \eqref{eqn:ThmEstimate} can be non-zero since the extension of the body forces  $\fb$ used to define $\fb_h$ can be, in general, not the same as $\fb^e=-\Delta\bu+\nab \bar p^e$. However, this term is higher order if $\fb$ is sufficiently smooth and $\Omega$ is sufficiently regular. This can be formulated in the following proposition.

\begin{prop}\label{Prop1} Assume $\fb\in \bH^2(\Omega)$ and $\Gamma\in C^4$. Then it holds that
\[
\|\fb^e-\fb_h\|_{V_h^*}\le {h^{2k+\frac52}} \|\fb\|_{H^2(\Omega)}.
\]
\end{prop}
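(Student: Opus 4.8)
The plan is to bound the functional $\fb^e-\fb_h$ by testing it against an arbitrary $\vh\in\bV_h$ and exploiting that $\fb^e$ and $\fb$ differ only on the thin geometric strip $S_h:=\Omega_h\setminus\Omega$. By the definitions of $\|\cdot\|_{V_h^*}$ and $\fb_h$ we have $\|\fb^e-\fb_h\|_{V_h^*}=\sup_{\vh\neq 0}(\fb^e-\fb,\vh)_{\Omega_h}/\tnrma{\vh}$, and since $\fb^e$ coincides on $\Omega$ with the body force $\fb$ — from its definition $\fb^e=-\Delta\bu^e+\nab\bar p^e$, the identity $\bar p^e|_\Omega=p$, and the Stokes momentum equation — the integrand vanishes a.e.\ in $\Omega$. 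Hence $(\fb^e-\fb,\vh)_{\Omega_h}=(\fb^e-\fb,\vh)_{S_h}$, which I will estimate by Cauchy--Schwarz on $S_h$. By \eqref{GeomError}, $\dist(\Gamma,\Gamma_h)\lesssim h^{k+1}$, so for $h$ small $S_h$ lies in a tubular collar of $\Gamma$ of width $\lesssim h^{k+1}$, itself contained in $\Omega_h^\Gamma$ and in a fixed neighbourhood of $\overline\Omega$.

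For the data factor I would use that $\fb^e-\fb$ vanishes to first order on $\Gamma$. Setting $g:=-\Delta\bu^e+\nab p^e-\fb$ on a fixed neighbourhood $U$ of $\overline\Omega$ (so that $g=\fb^e-\fb$ on $S_h$), the hypotheses $\fb\in\bH^2(\Omega)$, $\Gamma\in C^4$ and elliptic regularity for the Stokes system give $\bu\in\bH^4(\Omega)$, $p\in H^3(\Omega)$ with $\|\bu\|_{H^4(\Omega)}+\|p\|_{H^3(\Omega)}\lesssim\|\fb\|_{H^2(\Omega)}$, whence $g\in\bH^2(U)$ and $\|g\|_{H^2(U)}\lesssim\|\fb\|_{H^2(\Omega)}$ after invoking boundedness of the extension operators. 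Since $g\equiv0$ on $\Omega$ by the Stokes equations and $g\in\bH^2$ across $\Gamma$, both $g$ and $\nab g$ have vanishing trace on $\Gamma$. A twofold Friedrichs-type estimate on the strip — integrating twice along normals issuing from $\Gamma$ and using the coarea formula, in the spirit of the narrow-band estimates of \cite{gross2015trace,reusken2015analysis} — then yields $\|\fb^e-\fb\|_{S_h}=\|g\|_{S_h}\lesssim h^{2(k+1)}\|g\|_{H^2(U)}\lesssim h^{2k+2}\|\fb\|_{H^2(\Omega)}$; the crucial point is that the strip width $h^{k+1}$ enters quadratically here, not merely linearly.

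For the test-function factor I would use the elementary strip inequality $\|\vh\|_{S_h}^2\lesssim h^{k+1}\|\vh\|_{\Gamma_h}^2+h^{2(k+1)}\|\nab\vh\|_{\Omega_h^\Gamma}^2$, obtained by summing over the cut elements in $\Omega_h^\Gamma$ the one-element versions (valid since $\vh$ is piecewise $H^1$), together with the bounds $h^{-1}\|\vh\|_{\Gamma_h}^2\le\sigma^{-1}\tnrma{\vh}^2$ and $\|\nab\vh\|_{\Omega_h^{\calT}}^2\le\sigma^{-1}\tnrma{\vh}^2$ built into $\tnrma{\cdot}$ (with $\sigma$ a fixed constant). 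This gives $\|\vh\|_{S_h}\lesssim h^{1/2}\tnrma{\vh}$. Combining with the preceding paragraph via Cauchy--Schwarz yields $(\fb^e-\fb,\vh)_{\Omega_h}\lesssim h^{2k+2}\cdot h^{1/2}\|\fb\|_{H^2(\Omega)}\tnrma{\vh}=h^{2k+5/2}\|\fb\|_{H^2(\Omega)}\tnrma{\vh}$, and dividing by $\tnrma{\vh}$ and taking the supremum completes the argument.

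The step I expect to be the main obstacle is the data estimate: justifying rigorously that $S_h$ sits inside a tubular collar of width $O(h^{k+1})$ on which $\bu^e$, $p^e$ and $\fb$ retain full regularity, and proving the ``double'' Friedrichs gain, which rests on the fact that $\fb^e-\fb$ — and not merely its trace — is identically zero throughout $\Omega$. A secondary technical nuisance is the cross-element handling of $\vh$ in the strip inequality, which is resolved by the element-wise summation over $\Omega_h^\Gamma$ described above.
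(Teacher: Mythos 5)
Your proof is correct and shares the paper's skeleton --- localize to the strip $\Omega_h\setminus\Omega$ where $\fb^e-\fb$ is supported, apply Cauchy--Schwarz, and gain powers of the strip width $O(h^{k+1})$ from the data factor and powers of $h$ from the test-function factor --- but you bound the two factors by genuinely different lemmas. For the data you exploit that $g=\fb^e-\fb$ lies in $\bH^2$ of a fixed neighbourhood and vanishes identically on $\Omega$, so that both $g$ and $\nab g$ have zero trace on $\Gamma$, and a second-order Taylor/Hardy argument along normals gives $\|g\|_{\Omega_h\setminus\Omega}\lesssim h^{2(k+1)}\|\fb\|_{H^2(\Omega)}$; the paper instead combines a first-order Poincar\'e gain with the narrow-band estimate \eqref{est:narrow} applied to $\nab(\fb^e-\fb)$, obtaining only $h^{3(k+1)/2}$. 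For the test function the paper invokes the volume-fraction bound of \cite[Lemma~7]{lehrenfeld2018stabilized}, $\|\vh\|_{\Omega_h\setminus\Omega}\lesssim h^{k/2}\|\vh\|_{\Omega_h^\Gamma}\lesssim h^{k/2+1}\tnrma{\vh}$, whereas you use a trace-plus-gradient strip inequality from $\Gamma_h$ and then deliberately keep only $h^{1/2}$. The exponents redistribute so that both routes land on exactly $h^{2k+\frac52}$ (your version, kept sharp, would in fact give $h^{5k/2+3}$). The only step needing care is your ``one-element'' strip inequality: $\vh$ is only piecewise $H^1$, and normal segments issuing from $\Gamma_h$ may cross element faces where $\vh$ jumps, so the element-wise summation does not by itself justify the estimate. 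Since you only need the very weak bound $\|\vh\|_{\Omega_h\setminus\Omega}\lesssim h^{1/2}\tnrma{\vh}$, you can bypass this entirely via the cruder chain $\|\vh\|_{\Omega_h\setminus\Omega}\le\|\vh\|_{\Omega_h^\Gamma}\lesssim h\tnrma{\vh}$, which is precisely the paper's own second step.
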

\begin{proof}
For the arguments below it is convenient to assume $\fb^e=-\Delta\bu+\nab p^e$, where $p^e$ is 
the smooth extension,  not clipped to zero  outside $\Omega_h$. Obviously, the assumption does not affect $\fb^e$ in $\Omega_h$ and so $\|\fb^e-\fb_h\|_{V_h^*}$ does not change.

    From the definition of the dual norm and the fact that $\fb^e-\fb=0$ in $\Omega$, we derive 
    \[
    (\fb^e-\fb,\vh)_{\Omega_h}/\tnrma{\vh}\le  \|\fb^e-\fb\|_{\Omega_h\setminus\Omega} \|\vh\|_{\Omega_h\setminus\Omega}/\tnrma{\vh}.
    \]

Denote by $U_\eps(\Gamma)$ an $\eps$-tubular neighborhood of $\Gamma$ and let $\Omega_\eps=\Omega\cup U_\eps(\Gamma)$. For  $g\in H^1(\Omega)$ and its smooth extension $g^e$ one can estimate~\cite[Lemma~4.10]{elliott2013finite}:
\begin{equation}\label{est:narrow}
    \|g^e\|_{L^2(U_\eps(\Gamma))} \lesssim \eps^{\frac12}
    \|g^e\|_{H^1(\Omega_\eps)} \lesssim 
    \eps^{\frac12}
    \|g\|_{H^1(\Omega)}.
\end{equation}
We let  $\eps = c\,h^{k+1}$ with $c$ sufficiently large such that $\Omega_h\setminus\Omega\subset U_\eps(\Gamma)$.  

We first use the Poincare inequality in $U_\eps(\Gamma)$ followed by the triangle and the narrow-band inequalities as well as  the regularity result for the solution of the Stokes equation:
\[
\begin{split}
\|\fb^e-\fb\|_{\Omega_h\setminus\Omega}&\le \|\fb^e-\fb\|_{U_\eps(\Gamma)}\\
&\lesssim h^{k+1} \|\nab(\fb^e-\fb)\|_{U_\eps(\Gamma)}\\
&\lesssim h^{k+1} (\|{\bm u}^e\|_{H^3(U_\eps(\Gamma))}+
\|p^e\|_{H^2(U_\eps(\Gamma))}+\|\nab\fb\|_{U_\eps(\Gamma)})\\
&\lesssim h^{3(k+1)/2} (\|{\bm u}^e\|_{H^4(\Omega_\eps)}+
\|p^e\|_{H^3(\Omega_\eps))}+\|\bm f\|_{H^2(\Omega_\eps)})\\
&\lesssim h^{3(k+1)/2} (\|{\bm u}^e\|_{H^4(\Omega)}+
\|p^e\|_{H^3(\Omega))}+\|\bm f\|_{H^2(\Omega)})
\lesssim h^{3(k+1)/2} \|\bm f\|_{H^2(\Omega)}.
\end{split}
\]
Finally we apply \cite[Lemma~7]{lehrenfeld2018stabilized} to estimate $\|\bv_h\|_{\Omega_h\setminus\Omega}\lesssim h^{\frac{k}2}\|\bv_h\|_{\Omega_h^\Gamma}$,  and so by the Poincare and the definition of the $\tnrma{\vh}$ norm, we have 
$\|\bv_h\|_{\Omega_h^\Gamma}\lesssim h\tnrma{\vh}$.
\end{proof}

Combining the result of the theorem with the interpolation property from Lemma~\ref{lem:Fortin}, 
{along with standard estimates using the inverse inequality,}
we have the
convergence result for the finite element velocity:
\begin{cor}\label{mainCor}
    Assume the solution $\bu$, $p$ to \eqref{Stokes}, $\fb$, and $\Gamma$,  are sufficiently smooth. Then it holds
    \[
    h^{\frac12}\|\nab (\bu^e - \bu_h)\|_{\Gamma_h}+ \tnrma{\bu^e-\bu_h} \le C h^k,
    \]
 where $C$ depends on the problem data,
 and we recall the gradient is understood
 to be taken piecewise with respect to $\calT_h^{\rm Al}$.
\end{cor}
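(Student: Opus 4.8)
The plan is to assemble the estimate from Theorem~\ref{thm:Main}, the Fortin-type interpolation bound of Lemma~\ref{lem:Fortin}, the higher-order perturbation bound of Proposition~\ref{Prop1}, and standard trace and inverse inequalities on the curved cut elements. First I would use the triangle inequality to write $\tnrma{\bu^e-\bu_h}\le\tnrma{\bu^e-\pi_V\bu^e}+\tnrma{\pi_V\bu^e-\bu_h}$. Lemma~\ref{lem:Fortin}, together with the $\bH^m$-continuity of the solenoidal extension, controls the first summand by $h^k\|\bu^e\|_{H^{k+1}(\Omega_h^\calT)}\lesssim h^k\|\bu\|_{H^{k+1}(\Omega)}$. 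For the second summand, since $\pi_V\bu^e-\bu_h\in\bV_h$ lies in the kernel of $b_h(\cdot,\cdot)$ (as used in the proof of Theorem~\ref{thm:Main}), Theorem~\ref{thm:Main} gives
\[
\tnrma{\pi_V\bu^e-\bu_h}\le\tnrma{(\pi_V\bu^e-\bu_h,\pi_Q\bar p^e-p_h,\pi_\Sigma\lambda^e-\lambda_h)}\lesssim h^k\|\bu\|_{H^{k+1}(\Omega)}+h^{k_\lambda+1}\|p\|_{H^{k_\lambda+2}(\Omega)}+\|\fb^e-\fb_h\|_{V_h^*}.
\]
Because $k_\lambda\in\{k-1,k\}$ implies $k_\lambda+1\ge k$, and Proposition~\ref{Prop1} bounds $\|\fb^e-\fb_h\|_{V_h^*}$ by $h^{2k+5/2}\|\fb\|_{H^2(\Omega)}$, which is of higher order, we obtain $\tnrma{\bu^e-\bu_h}\lesssim h^k$.

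It then remains to treat the boundary-gradient term $h^{1/2}\|\nab(\bu^e-\bu_h)\|_{\Gamma_h}$, which I would split in the same way. For the discrete part, $\|\nab\vh\|_{\Gamma_h\cap K}\lesssim h^{-1/2}\|\nab\vh\|_K$ for $\vh\in\bV_h$ follows by applying the unfitted trace inequality~\eqref{eqn.unfitted-trace} to $\nab\vh$ and absorbing the resulting second-order term with an inverse estimate (valid on the isoparametrically curved cut elements near $\Gamma$); hence $h^{1/2}\|\nab(\pi_V\bu^e-\bu_h)\|_{\Gamma_h}\lesssim\|\nab(\pi_V\bu^e-\bu_h)\|_{\Omega_h^\calT}\lesssim\tnrma{\pi_V\bu^e-\bu_h}\lesssim h^k$, using $\tnrma{\vh}^2\ge\sigma\|\nab\vh\|_{\Omega_h^\calT}^2$ with $\sigma$ a fixed constant. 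For the interpolation part, I would apply~\eqref{eqn.unfitted-trace} to $\nab(\bu^e-\pi_V\bu^e)\in\bH^1(K)$ and then use~\eqref{eqn:InterpEstimate2} with $s=1$ and $s=2$: the two contributions $h^{-1/2}h^k$ and $h^{1/2}h^{k-1}$ are both of order $h^{k-1/2}$, so after multiplication by $h^{1/2}$ and summation over $K\in\calT_h^{\rm Al}$ one gets $h^{1/2}\|\nab(\bu^e-\pi_V\bu^e)\|_{\Gamma_h}\lesssim h^k\|\bu\|_{H^{k+1}(\Omega)}$. Adding the two bounds yields the corollary, with $C$ depending on $\|\bu\|_{H^{k+1}(\Omega)}$, $\|p\|_{H^{k_\lambda+2}(\Omega)}$, $\|\fb\|_{H^2(\Omega)}$, and hence on the data of~\eqref{Stokes} through Stokes regularity.

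The bound on $\tnrma{\bu^e-\bu_h}$ itself is essentially immediate from the results already established, so the only point requiring a little care is the boundary-gradient term: one must invoke the local approximation estimate~\eqref{eqn:InterpEstimate2} one order above the energy norm (the case $s=2$, which needs $k\ge2$, guaranteed by $k\ge d$), and one must use the versions of the trace and inverse inequalities valid on the curved elements $K\in\calT_h^{\rm Al}$ rather than on affine simplices. Both ingredients are standard in the isoparametric setting, and everything else is bookkeeping.
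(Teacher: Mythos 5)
Your proposal is correct and follows exactly the route the paper intends — the paper's entire proof is the one-line remark preceding the corollary (combine Theorem~\ref{thm:Main}, the interpolation property of Lemma~\ref{lem:Fortin}, and standard trace/inverse estimates), and you have simply filled in that bookkeeping. The one small imprecision is that the local estimate \eqref{eqn:InterpEstimate2} with $s=2$ applies to the local interpolant $I_V$, not to the global Fortin operator $\pi_V$; writing $\pi_V\bu^e=I_V\bu^e+\bw_h$ and absorbing the discrete correction $\bw_h$ into your discrete trace-plus-inverse step (using $\|\nab\bw_h\|_{\Omega_h^{\calT}}\lesssim\tnrma{\bu^e-I_V\bu^e}\lesssim h^k$ from the proof of Lemma~\ref{lem:Fortin}) repairs this without changing anything else.
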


{
\begin{rmk}\label{rmk.pressure-robustness}
The constant $C$ in the estimate of Corollary~\ref{mainCor}
depends on Sobolev norms of the pressure variable (cf.~\eqref{eqn:ThmEstimate}).
Thus, despite yielding divergence-free velocity approximations, the error in velocity may depend on the quality of pressure approximation. 
This lack of ``pressure robustness'' is due to the weak imposition of boundary conditions,
and numerical experiments in Section~\ref{sec:noFlow} indicate that
this pressure dependence is reflected in the velocity approximation.
\end{rmk}
}

We now prove an error estimate for the post-processed pressure.  Recall the pressure post-processing step (cf.~\eqref{Post-process}): 
\begin{equation*}
a_{p,h}(r_h,\phi_h^*)  =    \fb_h(\nabla \qha) +(\bn_h\times(\nabla\times\uh), \nabla \qha)_{\Gh}\quad \forall\,\qha\in \Qh^*.
\end{equation*}
with $a_{p,h}(r_h,\phi_h^*) = (\nabla\pha,\nabla\qha)_{\Oh} + i_h(\pha, \qha)$.

\begin{thm}\label{Th:pressure}  Assume the solution $(\bu$, $p)$ to \eqref{Stokes}, $\fb$, and $\Gamma$,  are sufficiently smooth. 
Let $p^e$ be a smooth extension of $p$ to $\RR^d$, and let
$\ph^*$ be the solution to \eqref{Post-process}. There holds
\begin{equation}\label{pressureError}
h^{\frac12}\|\nabla(p^e-\ph^*)\|_{\Omega_h}+\|p^e-\ph^*\|_{\Omega_h}\le C\,h^{k-\frac12},
\end{equation}
where $C$ is a constant depending on the problem data.
\end{thm}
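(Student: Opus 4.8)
The plan is to follow the standard strategy for a coercive post-processing problem: the bilinear form $a_{p,h}(\cdot,\cdot)$ is coercive on $Q_h^*$ (up to the mean-value constraint) with respect to the norm $\|\nabla(\cdot)\|_{\Omega_h}^2 + i_h(\cdot,\cdot)$, thanks to the ghost-penalty-augmented extension estimate analogous to \eqref{ExtFE} applied to $H^1$-conforming scalar functions on $\calT_h^{\rm Al}$; this is already in the literature (\cite{LO19,Pre18}). So the problem \eqref{Post-process} is well posed, and the error analysis reduces to a Strang-type argument: I would compare $\ph^*$ to a suitable interpolant of $p^e$ in $Q_h^*$ — say the (Piola-free) Lagrange interpolant $\pi_h^* p^e$ — and bound $\|\nabla(\ph^* - \pi_h^* p^e)\|_{\Omega_h}^2 + i_h(\ph^*-\pi_h^* p^e, \ph^*-\pi_h^* p^e)$ via coercivity, testing the error equation against $q_h^* = \ph^* - \pi_h^* p^e$. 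Triangle inequality with the interpolation estimate $\|\nabla(p^e - \pi_h^* p^e)\|_{\Omega_h} \lesssim h^{k-1}\|p\|_{H^k}$ and $i_h(p^e-\pi_h^* p^e,\cdot)^{1/2}\lesssim h^{k-1}\|p\|_{H^k(\Omega_h^{\calT})}$ (from a variant of \eqref{aux705}) then controls the full error.

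The consistency error is the heart of the matter. The exact identity \eqref{intIdent2} says $(\nabla p,\nabla q)_\Omega = -(\bn\times(\nabla\times\bu),\nabla q)_\Gamma + (\fb,\nabla q)_\Omega$. I would subtract the discrete equation \eqref{Post-process} tested against $q_h^* = \ph^*-\pi_h^* p^e$, after inserting $\pi_h^* p^e$, and collect the terms into: (i) a \emph{geometric} error from $\Omega$ vs.\ $\Omega_h$ and $\Gamma$ vs.\ $\Gamma_h$ in both the volume and boundary integrals, controlled using the narrow-band estimates \eqref{est:narrow}, $\dist(\Gamma,\Gamma_h)\lesssim h^{k+1}$, and $\|\bn-\bn_h\|_{L^\infty}\lesssim h^k$ from \eqref{GeomError}; (ii) a \emph{velocity approximation} error $(\bn_h\times\nabla\times(\bu^e-\uh),\nabla q_h^*)_{\Gamma_h}$, which after an unfitted trace inequality \eqref{eqn.unfitted-trace} and an inverse estimate on $q_h^*$ is bounded by $h^{-1/2}\big(\|\nabla(\bu^e-\uh)\|_{\Gamma_h}$-type terms$\big)\|\nabla q_h^*\|_{\Omega_h}$, and here I would invoke Corollary~\ref{mainCor}, which gives $h^{1/2}\|\nabla(\bu^e-\uh)\|_{\Gamma_h}\lesssim h^k$; (iii) the ghost-penalty consistency $i_h(\pi_h^* p^e,q_h^*)\lesssim h^{k-1}\|p\|_{H^k(\Omega_h^{\calT})}\|\nabla q_h^*\|$; and (iv) the right-hand side mismatch $(\fb^e-\fb_h,\nabla q_h^*)$, handled exactly as in Proposition~\ref{Prop1} and therefore higher order. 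Summing, every contribution is $O(h^{k-1/2})$ relative to $\|\nabla q_h^*\|_{\Omega_h}$, whence $\|\nabla(p^e-\ph^*)\|_{\Omega_h}\lesssim h^{k-1/2}$; multiplying by $h^{1/2}$ gives the first term in \eqref{pressureError}.

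For the $L^2$ estimate I would use a Poincaré–Friedrichs inequality on $\Omega_h$: since both $p^e$ and $\ph^*$ can be normalized to have vanishing mean on $\Omega_h$ (the theorem's $p^e$ can be chosen so, or one absorbs the constant shift $|\Omega_h|^{-1}(p^e,1)_{\Omega_h}$, which is itself $O(h^{k+1})$ by the narrow-band bound and $(p^z,1)_\Omega=0$ being no longer exactly preserved — a lower-order term), one has $\|p^e-\ph^*\|_{\Omega_h}\lesssim \|\nabla(p^e-\ph^*)\|_{\Omega_h}\lesssim h^{k-1/2}$. This already yields \eqref{pressureError}; a duality (Aubin–Nitsche) argument could in principle sharpen the $L^2$ rate, but the statement only claims $h^{k-1/2}$, so I would not pursue it.

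**Main obstacle.** The delicate point is the boundary consistency term $(\bn\times\nabla\times\bu,\nabla q)$ passing from $\Gamma$ to $\Gamma_h$: one must simultaneously account for the change of domain of integration (distance $h^{k+1}$), the change of normal ($\bn$ vs.\ $\bn_h$, error $h^k$), and the fact that $\uh$ only approximates $\bu^e$ on $\Gamma_h$ in a weak, piecewise sense — so the relevant bound on $\|\nabla(\bu^e-\uh)\|_{\Gamma_h}$ must be imported from the velocity analysis (Corollary~\ref{mainCor}) rather than re-derived, and the loss of a half power of $h$ there is precisely what degrades the $L^2$ rate from optimal $h^k$ to $h^{k-1/2}$. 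Keeping careful track of which terms actually lose the half power (the Nitsche-type velocity trace on $\Gamma_h$, not the geometric or ghost-penalty terms) is the bookkeeping that makes or breaks the proof.
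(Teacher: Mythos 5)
Your $H^1$ part is essentially the paper's argument: derive the error (orthogonality) equation from \eqref{Post-process} and \eqref{intIdent2}, use $\Delta\bu^e=-\nabla\times\nabla\times\bu^e$ and integration by parts, bound the boundary consistency term via Corollary~\ref{mainCor}, handle $i_h$ and the data mismatch as in Proposition~\ref{Prop1}, and conclude by Strang's lemma. One internal inconsistency: you claim $\|\nabla(p^e-\ph^*)\|_{\Omega_h}\lesssim h^{k-1/2}$, but the best-approximation error of the degree-$(k-1)$ space $Q_h^*$ in the $H^1$ seminorm is already $O(h^{k-1})$ (you quote this interpolation bound yourself), so the gradient error cannot generically beat $h^{k-1}$. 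That is harmless for the first term of \eqref{pressureError}, since $h^{1/2}\cdot h^{k-1}=h^{k-1/2}$ is exactly what is claimed, and indeed the paper proves $\|p^e-\ph^*\|_{1,h}\lesssim h^{k-1}$ in \eqref{pH1}.

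The genuine gap is the $L^2$ estimate. A Poincar\'e--Friedrichs inequality applied to the (correct) gradient bound gives only $\|p^e-\ph^*\|_{\Omega_h}\lesssim h^{k-1}$, which is \emph{weaker} than the claimed $h^{k-1/2}$; your chain $\|p^e-\ph^*\|_{\Omega_h}\lesssim\|\nabla(p^e-\ph^*)\|_{\Omega_h}\lesssim h^{k-1/2}$ fails at the second inequality. The extra half order in $L^2$ cannot be obtained without the Aubin--Nitsche duality argument you explicitly declined to pursue. The paper's proof introduces the diffeomorphism $\bPsi_h$, normalizes $\tilde\rho=(p^e-\ph^*)\circ\bPsi_h-c_0$ to mean zero (with $|c_0|\lesssim h^k$), solves the Neumann problem $-\Delta\phi=\tilde\rho$ with $\|\phi\|_{H^2(\Omega)}\lesssim\|\tilde\rho\|_{\Omega}$, and tests the orthogonality relation \eqref{orthog} with the Lagrange interpolant $\phi_h^*$. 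The decisive step is \eqref{aux1310}--\eqref{nabPhih}: the problematic term $(\bn_h\times\nabla\times(\bu_h-\bu^e),\nabla\phi_h^*)_{\Gamma_h}$ is bounded by $h^{k-\frac12}\|\nabla\phi_h^*\|_{\Gamma_h}$, and the trace $\|\nabla\phi_h^*\|_{\Gamma_h}$ is controlled by $\|\tilde\rho\|_{\Omega}$ using the $H^2$ regularity of the dual solution (via the interpolation $\|\phi\|_{H^2}^{1/2}\|\nabla\phi\|^{1/2}$). It is precisely this extra regularity of the dual problem, not a Poincar\'e inequality, that converts the $h^{k-\frac12}$ boundary consistency into an $L^2$ bound of the same order. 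Without it your argument proves only $\|p^e-\ph^*\|_{\Omega_h}\lesssim h^{k-1}$.
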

\begin{proof}
From \eqref{Post-process} we have
\begin{align*}
&(\nab (p^e-p_h^*),\nab \qh^*)_{\Omega_h}
+ i_h(p^e-p_h^*,\qh^*)\\
&\qquad = (\nab p^e,\nab \qh^*)_{\Omega_h} +i_h(p^e,\qh^*)
-(\fb,\nab \qh^*)_{\Omega_h} - (\bn_h\times (\nab \times \bu_h),\nab \qh^*)_{\Gamma_h}\\
&\qquad = i_h(p^e,q_h^*) + (\Delta \bu^e,\nab \qh^*)_{\Omega_h} - (\bn_h\times (\nab \times \bu_h),\nab \qh^*)_{\Gamma_h}+c_f(\qh^*).
\end{align*}
with $c_f(\qh^*)=(\fb^e-\fb,\nab \qh^*)_{\Omega_h}$.
Using $\Delta \bu^e = \nab \nab \cdot \bu^e - \nab\times \nab \times \bu^e = - \nab\times \nab \times \bu^e$, integration-by-parts, the continuity of $\qh^*$, 
and Corollary~\ref{mainCor}, there holds
\begin{equation}
\label{orthog}
\begin{split}
&(\nab (p^e-p_h^*),\nab \qh^*)_{\Omega_h}
+ i_h(p^e-p_h^*,\qh^*)\\
&\quad = i_h(p^e,\qh^*) -\sum_{T\in \calT^{\rm Al}_h} \int_{\p (T\cap \Omega_h)} ((\nab \times \bu^e) \times \nab \qh^*)\cdot \bn_h  - (\bn_h\times (\nab \times \bu_h),\nab \qh^*)_{\Gamma_h}+c_f(\qh^*)\\
&\quad = i_h(p^e,\qh^*) + (\bn_h\times (\nab \times (\bu_h-\bu^e)),\nab \qh^*)_{\Gamma_h}+c_f(\qh^*)\\
&\quad \lesssim 
\left(i_h(p^e,p^e)^{\frac12} +h^{k-1} +\|{\bm f}-{\bm f}^e\|_{-1,h}\right) \|\qh^*\|_{1,h},
\end{split}
\end{equation}
where $\|\qh^*\|_{1,h}^2 = \|\nab \qh^*\|_{\Omega_h}^2 + i_h(\qh^*,\qh^*)$, and 
\[
\|\fb^e-\fb\|_{-1,h}=\sup_{q_h\in Q_h} \frac{(\fb^e-\fb,\nab \qh^*)_{\Omega_h}}{\|\qh\|_{1,h}}.
\]

By repeating the same arguments as used to prove Proposition~\ref{Prop1}, we get
$\|\fb^e-\fb\|_{-1,h}\lesssim h^{2k+\frac32} \|\fb\|_{H^2(\Omega)}.$ Therefore
by \eqref{orthog} and Strang's lemma, we conclude
\begin{equation}\label{pH1}
\|p^e - p_h^*\|_{1,h}\lesssim h^{k-1}+h^{k_\lambda}\lesssim h^{k-1},
\end{equation}
which yields the optimal-order bound
for $\|\nabla(p-\ph^*)\|_{\Omega_h}$ in \eqref{pressureError}.
Let $\bPsi_h:\Omega\to\Omega_h$ be a $W^{1,\infty}$ diffeomorphism such that 
\begin{equation}\label{diffEst}
    \|id-\bPsi_h\|_{L^\infty(\Omega)}+h\|{\bf I}-D\bPsi_h\|_{L^\infty(\Omega)} \lesssim h^{k+1},
\end{equation}
and denote $\mu_h=\det(D\bPsi_h)$.
Such a diffeomorphism is constructed, for example in~\cite{gross2015trace,lehrenfeld2018analysis}.
Moreover, for a smooth extension of $q\in H^1(\Omega)$, we have (cf.~\cite[Lemma 7.3]{gross2015trace}):
\begin{equation}\label{aux1306}
    \|q-q^e\circ\bPsi_h\|_{\Omega}\lesssim h^{k+1}\|q\|_{H^1(\Omega)}.
\end{equation}

Set $\rho = (p^e-\ph^*)\circ\bPsi_h$, and let $\tilde\rho=\rho-c_0$, $c_0\in\RR$, so that $\tilde\rho$ has zero mean in $\Omega$. Using \eqref{diffEst}, \eqref{aux1306}, and $(p^e,1)_\Omega=(\ph^*,1)_{\Omega_h}=0$, one finds
\begin{equation}\label{aux1288}
    |c_0|\lesssim h^{k}(\|p\|_{H^1(\Omega)}+\|\ph^*\|_{L^1(\Omega_h)})\le h^{k}\|p\|_{H^1(\Omega)}. 
\end{equation}

Let $\phi\in H^1(\Omega)$ be the solution to $-\Delta \phi = \tilde\rho$ in $\Omega$
with homogeneous Neumann boundary conditions, and note
that by $H^2$-regularity, we have 
$\|\phi\|_{H^2(\Omega)}\lesssim \|\tilde\rho\|_{\Omega}$.
We extend this function smoothly to 
$\phi^e:\bbR^d\to \bbR$,
and let $\phi_h^*\in Q_h^*$ be the Lagrange interpolant to $\phi^e$. 

We then have
\begin{multline}\label{aux1298}
    \|\tilde\rho\|_{\Omega}^2 
= (\nab \tilde\rho, \nab \phi)_{\Omega} = (\nab \rho, \nab \phi)_{\Omega}\\
= [(\nab \rho, \nab \phi)_{\Omega} - a_{p,h}(p^e-\ph^*, \phi^e)] + a_{p,h}(p^e-\ph^*, \phi^e-\phi_h^*)
+ a_{p,h}(p^e-\ph^*, \phi_h^*).
\end{multline}

The  first two terms in the right hand side
of \eqref{aux1298} are estimated using \eqref{diffEst}, interpolation properties
(cf.~proof of Theorem~8.1 in \cite{gross2015trace}),
and \eqref{pH1}:
\begin{equation}
[(\nab \rho, \nab \phi)_{\Omega} - a_{p,h}(p^e-\ph^*, \phi^e)] + a_{p,h}(p^e-\ph^*, \phi^e-\phi_h^*)\lesssim 
h \|p^e-p_h^*\|_{1,h}\|\tilde\rho\|_{\Omega}\lesssim h^{k+1}\|\tilde \rho\|_{\Omega}.
\end{equation}
For the last term, we use \eqref{orthog}, $|i_h(p^e,\phi_h^*)|\le h^{k+1}\|p\|_{H^{k+1}(\Omega)}\|\phi\|_{H^2(\Omega)}$ (see \cite[Lemma 5.5]{LO19}),
and Corollary~\ref{mainCor} to bound
\begin{equation}\label{aux1310}
\begin{split}
|a_{p,h}(p^e-\ph^*, \phi_h^*)| 
&\lesssim  h^{k+1}\|p\|_{H^{k+1}(\Omega)}\|\phi\|_{H^2(\Omega)}+  |(\bn_h\times (\nab \times (\bu_h-\bu^e)),\nab \phi_h^*)_{\Gamma_h}| + c_f(\phi_h^*)\\
&\lesssim h^{k+1} \|\phi\|_{H^2(\Omega)} 
+ \|\nab(\bu^e -\bu_h)\|_{\Gamma_h} \|\nab \phi_h^*\|_{\Gamma_h} + \|{\bm f}-{\bm f}^e\|_{-1,h} \|\phi_h^*\|_{1,h}\\
&\lesssim h^{k+1} \|\tilde \rho\|_{H^2(\Omega)} +h^{k-\frac12} \|\nab \phi_h^*\|_{\Gamma_h}.
\end{split}
\end{equation}
The last term in the right-hand side is bounded as
\begin{equation}
\label{nabPhih}
\begin{split}
\|\nab \phi_h^*\|_{\Gamma_h} &\lesssim 
\|\nab (\phi^e-\phi_h^*)\|_{\Gamma_h}+\|\nab \phi^e\|_{\Gamma_h}
\lesssim h^{\frac12}\|\phi^e\|_{H^2(\Omega^\Gamma_h)}+
\|\phi\|_{H^2(\Omega)}^{\frac12}\|\nabla\phi\|_{\Omega}^{\frac12}\\
&\lesssim h^{\frac12}\|\tilde\rho\|_{\Omega}+
\|\tilde\rho\|_{\Omega}^{\frac12}\|\tilde\rho\|_{H^{-1}(\Omega)}^{\frac12}
\lesssim \|\tilde\rho\|_{\Omega}.
\end{split}
\end{equation}

From \eqref{aux1298}--\eqref{nabPhih} and the last bound we get $\|\tilde\rho\|_{\Omega}\lesssim h^{k-\frac12}$. From this and \eqref{diffEst}--\eqref{aux1288}, we conclude \eqref{pressureError}.
\end{proof}

{
\begin{rmk}
The post-processed pressure error \eqref{pressureError} has optimal rate
in the $H^1$ semi-norm, but suboptimal (by $1/2$) in the $L^2$ norm.
However, numerical experiments in Section~\ref{sec:Numerics} show
optimal rates, indicating that the $L^2$ error in \eqref{pressureError} is not sharp.
\end{rmk}
}

\begin{thm}\label{ThL2}
	Assume the solution $\bu$, $p$ to \eqref{Stokes}, $\fb$, and $\Gamma$,  are sufficiently smooth. It holds
	\begin{equation}\label{vel:L2}
		\|\bu^e - \bu_h\|_{\Omega_h}\le Ch^{k+1},
	\end{equation}
	where $C$ depends on the problem data.
\end{thm}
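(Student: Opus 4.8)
The estimate \eqref{vel:L2} is obtained by an Aubin--Nitsche duality argument built on the energy-norm estimate of Corollary~\ref{mainCor}, the stability results of Section~\ref{sec:SAnalysis}, and --- decisively --- the exact divergence-free property of Lemma~\ref{lem:DivFree}. Write $\eb = \bu^e - \bu_h$. First I would introduce the adjoint Stokes problem on the smooth domain $\Omega$: find $(\bm{z},r)$ with $-\Delta\bm{z} + \nab r = \eb$, $\nab\cdot\bm{z} = 0$ in $\Omega$, $\bm{z} = 0$ on $\Gamma$, $(r,1)_\Omega = 0$, the data $\eb$ being transported to $\Omega$ by the $W^{1,\infty}$ diffeomorphism $\bPsi_h$ used in the proof of Theorem~\ref{Th:pressure} (its $O(h^{k+1})$ geometric mismatch only contributes higher-order terms). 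Elliptic regularity gives $\|\bm{z}\|_{H^2(\Omega)} + \|r\|_{H^1(\Omega)}\lesssim\|\eb\|_{\Omega_h}$. I would extend $\bm{z},r$ smoothly off $\Omega$ and set $\bm{z}_h := \pi_V\bm{z}\in\bV_h$, the Fortin interpolant of Lemma~\ref{lem:Fortin}, so that $\nab\cdot\bm{z}_h\equiv 0$.

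Testing the adjoint equation against $\eb$ over $\Omega_h$ and integrating by parts elementwise over $\calT_h^{\rm Al}$ (recall $\bV_h\not\subset\bH^1$), the identities $\nab\cdot\eb\equiv 0$ and $\eb\in\bH({\rm div})$ yield $\|\eb\|_{\Omega_h}^2 = a_h(\eb,\bm{z}) + R_\Gamma$, where $R_\Gamma$ collects boundary integrals over $\Gamma_h$ of the form $\int_{\Gamma_h}(\bn_h\cdot\nab\eb)\cdot\bm{z}$, $h^{-1}\int_{\Gamma_h}\eb\cdot\bm{z}$ and $\int_{\Gamma_h}(\eb\cdot\bn_h)r$. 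Splitting $a_h(\eb,\bm{z}) = a_h(\eb,\bm{z}-\bm{z}_h) + a_h(\eb,\bm{z}_h)$ and using Galerkin orthogonality for the exact solution, $A_h((\eb,\bar p^e-p_h,\lambda^e-\lambda_h),(\bm{z}_h,q_h,0)) = \mathcal{C}(\bm{z}_h)$ with $\mathcal{C}$ the geometric/nonconformity consistency residual of $(\bu^e,\bar p^e,\lambda^e)$ --- the key point being that \emph{both} pressure couplings vanish, $b_h(\bar p^e-p_h,\bm{z}_h) = 0$ since $\nab\cdot\bm{z}_h\equiv 0$ and $b_h(q_h,\eb) = 0$ since $\nab\cdot\eb\equiv 0$, so the only $O(h^{k-1/2})$-accurate pressure of Theorem~\ref{Th:pressure} never enters (this is the pressure-robustness mechanism) --- I arrive at
\[
\|\eb\|_{\Omega_h}^2 = a_h(\eb,\bm{z}-\bm{z}_h) + \mathcal{C}(\bm{z}_h) - i_h(\eb,\bm{z}_h) - c_h(\lambda^e-\lambda_h,\bm{z}_h) + R_\Gamma.
\]

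Each of the five terms is shown to be $\lesssim h^{k+1}\|\eb\|_{\Omega_h}$. For $a_h(\eb,\bm{z}-\bm{z}_h)$ one uses Cauchy--Schwarz together with $\tnrma{\pi_V\bu^e-\bu_h}\lesssim h^k$ and $h^{1/2}\|\nab(\bu^e-\bu_h)\|_{\Gamma_h}\lesssim h^k$ from Corollary~\ref{mainCor} and the $\bH^2$-interpolation bounds $\|\nab(\bm{z}-\bm{z}_h)\|_{\Omega_h^{\calT}}\lesssim h\|\bm{z}\|_{H^2}$, $\|\bm{z}-\bm{z}_h\|_{\Gamma_h}\lesssim h^{3/2}\|\bm{z}\|_{H^2}$, $\|\nab(\bm{z}-\bm{z}_h)\|_{\Gamma_h}\lesssim h^{1/2}\|\bm{z}\|_{H^2}$. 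The residual $\mathcal{C}(\bm{z}_h)$ is estimated as in the proofs of Lemma~\ref{lem:Consist2}, Theorem~\ref{thm:Main} and Proposition~\ref{Prop1}, but with the \emph{discrete} argument $\bm{z}_h$ in place of a generic $\vh$: its velocity-nonconformity part $\sum_{F\in\Fhogp}\int_F\nab\bu^e:[\bm{z}_h]$ is controlled by Lemma~\ref{lem:Consist} at the \emph{maximal} regularity index $m=2$, giving $h^{k+1}\|\bu\|_{H^{k+1}}\|\bm{z}_h\|_{H^2}$; its Nitsche/ghost-penalty parts use Lemma~\ref{LAux}, $i_h(\bu^e,\bm{z}_h)\lesssim h^{k+1}\|\bu\|_{H^{k+1}}\|\bm{z}\|_{H^2}$, and the near-boundary smallness $\|\bu^e\|_{\Gamma_h}\lesssim h^{k+1}$ and $\|\bm{z}_h\|_{\Gamma_h}\lesssim h^{3/2}\|\bm{z}\|_{H^2}$ (since $\bu = 0$ and $\bm{z} = 0$ on $\Gamma$; cf.~\cite[Lemma~7.3]{gross2015trace} and \eqref{GeomError}); and its data part is of higher order, $O(h^{2k+2})$, by the argument of Proposition~\ref{Prop1}. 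The term $i_h(\eb,\bm{z}_h)\le i_h(\eb,\eb)^{1/2}i_h(\bm{z}_h,\bm{z}_h)^{1/2}\lesssim h^k\cdot h\,\|\bm{z}\|_{H^2}$, using $i_h(\eb,\eb)\lesssim h^{2k}$ (from the stability bounds and Lemma~\ref{LAux}) and $i_h(\pi_V\bm{z},\pi_V\bm{z})\lesssim h^2\|\bm{z}\|_{H^2}^2$; and $c_h(\lambda^e-\lambda_h,\bm{z}_h)\le h^{-1/2}\|\lambda^e-\lambda_h\|_{\Gamma_h}\,h^{1/2}\|\bm{z}_h\|_{\Gamma_h}\lesssim h^{k-1/2}h^{3/2}\|\bm{z}\|_{H^2}$, using $h^{1/2}\|\lambda^e-\lambda_h\|_{\Gamma_h}\lesssim\sigma^{1/2}\tnrma{\pi_\Sigma\lambda^e-\lambda_h}\lesssim h^k$. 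Finally $R_\Gamma = O(h^{2k+1})$ because $\|\bm{z}\|_{\Gamma_h}\lesssim h^{k+1}\|\bm{z}\|_{H^2}$ and $\|\eb\cdot\bn_h\|_{\Gamma_h}\lesssim h^{k+1}$ --- the latter since $\bu^e = 0$ on $\Gamma$ and $c_h(\mu_h,\bu_h) = 0$ for all $\mu_h\in\Sigma_h$. Collecting, $\|\eb\|_{\Omega_h}^2\lesssim h^{k+1}\|\eb\|_{\Omega_h}$, which is \eqref{vel:L2}.

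The main obstacle is sharpening the consistency residual $\mathcal{C}(\bm{z}_h)$ from the $O(h^k)$ bound that suffices for the energy estimate (Theorem~\ref{thm:Main}) to $O(h^{k+1})$. This cannot be done by routing through $\pi_V\bu^e$, since the pressure-Fortin correction in Lemma~\ref{lem:Fortin} only guarantees $\|\bu^e-\pi_V\bu^e\|_{\Omega_h^{\calT}}\lesssim h^k$, not $h^{k+1}$; instead one must use consistency of the \emph{smooth} exact velocity $\bu^e\in\bH^{k+1}$ directly and invoke Lemma~\ref{lem:Consist} at $m=2$, which is exactly where the restriction $d=2$ (with $k\ge d=2$, so that $m=2$ is admissible) and the $\bH^2$-regularity of the dual solution are used. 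All the remaining work is geometric bookkeeping, greatly simplified by the structural facts that $\bu^e$ and $\bm{z}$ vanish on $\Gamma$ (so their traces on $\Gamma_h$ are $O(h^{k+1})$) and that $\nab\cdot\eb\equiv\nab\cdot\bm{z}_h\equiv 0$.
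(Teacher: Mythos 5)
Your overall strategy is exactly the paper's: an Aubin--Nitsche duality argument with the dual data transported by the diffeomorphism $\bPsi_h$, $H^2$-regularity of the dual pair, the divergence-free Fortin interpolant $\pi_V$ of the dual velocity so that neither the $O(h^{k-1/2})$-accurate discrete pressure nor the dual pressure couples through $b_h$, Lemma~\ref{lem:Consist} at $m=2$ to absorb the velocity nonconformity at the extra order, and the structural facts $\bu^e=0$, dual velocity $=0$ on $\Gamma$ to make all $\Gamma_h$-traces of the continuous velocities $O(h^{k+1})$. Your term-by-term estimates for $a_h(\eb,\bm z-\bm z_h)$, the consistency residual, $i_h(\eb,\bm z_h)$, and the Lagrange-multiplier coupling all match the paper's \eqref{eqn:StartS1}--\eqref{eqn:StartS2D}.

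There is, however, one step that does not hold as you justify it: the claim $\|\eb\cdot\bn_h\|_{\Gamma_h}\lesssim h^{k+1}$, which you use to dispose of the dual-pressure boundary term $\int_{\Gamma_h}(\eb\cdot\bn_h)\,r$ inside $R_\Gamma$. The constraint $c_h(\mu_h,\bu_h)=0$ for all $\mu_h\in\Sigma_h$ only says that $\bu_h\cdot\bn_h$ is $L^2(\Gamma_h)$-orthogonal to $\Sigma_h$; since $\bu_h\cdot\bn_h$ is not itself (close to) an element of $\Sigma_h$, this gives no smallness of its $L^2(\Gamma_h)$ norm. What the available bounds actually give is $\|\eb\|_{\Gamma_h}\lesssim h^{1/2}\tnrma{\eb}\lesssim h^{k+1/2}$, and since the dual pressure $r$ does \emph{not} vanish on $\Gamma$ (so $\|r^e\|_{\Gamma_h}=O(1)\,\|\eb\|_{\Omega_h}$, unlike the dual velocity), your bound for this term comes out as $O(h^{k+1/2})\|\eb\|_{\Omega_h}$ --- half an order short. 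The paper's fix (cf.\ \eqref{eqn:StartS1} and \eqref{eqn:StartS3}) is to insert a discrete multiplier $\eta_h\approx\eta^e$ in $\Sigma_h$: the piece $c_h(\eta^e-\eta_h,\eb)$ is bounded by $\|\eta^e-\eta_h\|_{\Gamma_h}\,\|\eb\|_{\Gamma_h}\lesssim h^{1/2}\|\eta\|_{H^1(\Omega)}\cdot h^{1/2}\tnrma{\eb}\lesssim h^{k+1}\|\eb\|_{\Omega_h}$, while the piece $c_h(\eta_h,\eb)=c_h(\eta_h,\bu^e)$ (now legitimately using $c_h(\eta_h,\bu_h)=0$, since $\eta_h\in\Sigma_h$) is bounded by $h^{k+1}\|\eta_h\|_{\Gamma_h}$ via $\|\bu^e\|_{L^\infty(\Gamma_h)}\lesssim h^{k+1}$. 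With that repair your argument closes; everything else is sound.
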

\begin{proof}
The proof starts with a standard duality argument. Let $\bPsi_h:\,\Omega\to\Omega_h$ be the mapping satisfying \eqref{diffEst}, set $\eb:=\bu^e - \bu_h$, and note
that $\tnrma{\eb}\lesssim h^k$ by Corollary~\ref{mainCor}.

Consider the pair $(\bw, \eta)\in \bH^1_0(\Omega)\times L^2_0(\Omega)$ solving the Stokes problem:
	\begin{equation}\label{eq:dual}
		\left\{
		\begin{aligned}
			- \Delta \bw + \nabla \eta &= \mu_h\eb\circ\Psi_h &&\text{in }\Omega\\
			\nabla\cdot\bw &= 0 &&\text{in }\Omega
\end{aligned}\right.
	\end{equation}
By $H^2$-regularity and the properties in \eqref{diffEst} of $\bPsi_h$ we have $(\bw,\eta)\in \bH^2(\Omega)\times H^1(\Omega)$, and
\begin{equation}\label{eq:H2}
	\|\bw\|_{H^2(\Omega)} + \|\eta\|_{H^1(\Omega)} \lesssim \|\mu_h\eb\circ\bPsi_h\|_{\Omega}\lesssim \|\eb\|_{\Omega_h}.
\end{equation}	
We let $\bw^e$ be a smooth solenoidal extension of $\bw$,
and $\eta^e$ a smooth extension of $\eta$.

Taking the $L^2$ inner product 	of the first equation in \eqref{eq:dual} with $\eb\circ\bPsi_h$, we get after integration by parts and a change of variables:
\[
\|\eb\|_{\Omega_h}^2=
a_h(\bw^e,\eb) + c_h(\eta^e,\eb) +\sum_{i=1}^6 I_i,
\]
where the six terms $\{I_i\}_{i=1}^6$ incorporate geometric
errors and whose precise definitions are given below.
These terms  are of higher order and we estimate the  using the properties of the mapping $\bPsi_h$: 
\[
\begin{split}
	I_1&
    :=(\nab \bw,\nab(\eb\circ \bPsi_h))_\Omega - ( \nab \bw^e,\nab \eb)_{\Omega_h}
     = ((\mu_h^{-1}\nab \bw D\bPsi_h^T)\circ \bPsi_h^{-1}-\nab \bw^e,\nab \eb)_{\Omega_h}\\
& \lesssim (h^k \|(\nabla\bw)\circ\bPsi_h^{-1}\|_{\Omega_h} + \|(\nabla\bw)\circ\bPsi_h^{-1}-\nabla\bw^e\|_{\Omega_h})\|\nabla \eb\|_{\Omega_h}\\
& \lesssim (h^k \|\nabla\bw\|_{\Omega}  +h^{k+1} \|\bw^e\|_{H^2(\Omega_h)} )\|\nabla \eb\|_{\Omega_h} 
\lesssim h^{2k} \|\eb\|_{\Omega_h}.
\end{split}
\]
Here, we have used the algebraic identity $A:(BC) = (AC^T):B$.

Likewise, using $\nabla\cdot \eb=0$,
\begin{equation*}
\begin{split}
	 I_2 &:=-( \eta, \nab \cdot (\eb\circ \bPsi_h))_{\Omega}
= ((\mu_h^{-1} D\Psi_h^T - \mathbf{I})\eta\circ\bPsi_h^{-1},\nabla \eb )_{\Omega_h}\\
	 &\lesssim h^{k}\|\eta\circ\bPsi_h^{-1}\|_{\Omega_h} \|\nabla \eb \|_{\Omega_h}
	 \lesssim h^{k}\|\eta\|_{\Omega} \|\nabla \eb \|_{\Omega_h}
	 \lesssim h^{2k} \|\eb\|_{\Omega_h}.
\end{split}
\end{equation*}
Continuing, we have
\begin{align*}
I_3:
&=(\bn_h\cdot \nab \bw^e,\eb)_{\Gamma_h} -(\bn\cdot \nab \bw, (\eb\circ \Psi_h))_\Gamma
= (\bn_h\cdot \nab \bw^e - (\tilde \mu_h^{-1} \bn\cdot \nab \bw)\circ \bPsi_h^{-1},\eb)_{\Gamma_h},
\end{align*}
where $\tilde \mu_h = |{\rm cof}(D\Psi_h) \bn|$ is the surface Jacobian.
Using $|\tilde \mu_h - 1|\lesssim h^k$ and $|\bn_h-\bn|\lesssim h^k$, 
we find
\begin{align*}
I_3
&\lesssim \|\nab \bw^e-\nab \bw \circ \bPsi_h^{-1}\|_{\Gamma_h} \|\eb\|_{\Gamma_h}
+ h^k \|\nab \bw\|_{\Gamma} \|\eb\|_{\Gamma_h}
\lesssim h^{\frac32k+1} \|\eb\|_{\Omega_h},
\end{align*}
where we used $\|\nab \bw^e-\nab \bw \circ \bPsi_h^{-1}\|_{\Gamma_h}\lesssim h^{\frac{k}2+\frac12}\|\bw\|_{H^2(\Omega)}$ (cf.~the proof of~\cite[Lemma~7.3]{gross2015trace}), $\|\eb\|_{\Gamma_h}\lesssim h^{k+\frac12}$ and a trace inequality.

By similar arguments, along with Corollary~\ref{mainCor},
\begin{align*}
I_4:=(\bw^e,\bn_h \cdot \nab \eb)_{\Gamma_h}
 = (\bw^e -\bw\circ \bPsi_h^{-1},\bn_h\cdot \nab \eb)_{\Gamma_h}\lesssim h^{k+1}\|\bw\|_{H^2(\Omega)} \|\nab \eb\|_{\Gamma_h}\lesssim h^{2k+\frac12} \|\eb\|_{\Omega_h},
\end{align*}

and
\begin{align*}
I_5 & := (\eta, (\eb\circ \bPsi)\cdot \bn)_{\Gamma} -(\eta^e,\bn_h\cdot \eb)_{\Gamma_h} = ((\tilde \mu_h^{-1}\eta  \bn)\circ \bPsi_h^{-1} -\eta^e \bn_h,\eb)_{\Gamma_h}\\
&\le h^{\frac12} \|(\tilde \mu_h^{-1}\eta  \bn)\circ \bPsi_h^{-1} -\eta^e \bn_h\|_{\Gamma_h} \tnrma{\eb}\lesssim h^{2k+\frac12} \|\eb\|_{\Omega_h}.
\end{align*}

Finally,
\[
\begin{split}
I_6 &:= \gamma_nh^{-1}(\bw^e-\bw\circ\bPsi_h^{-1}),\eb)_{\Gamma_h}\lesssim
h^{-\frac12}\|\bw^e-\bw\circ\bPsi_h^{-1}\|_{\Gamma_h}\tnrma{\eb}
\lesssim h^{2k+\frac12} \|\eb\|_{\Omega_h}.
\end{split}
\]

Applying the estimates of $I_i$, 
we conclude that for arbitrary $\bw_h\in \bV_h$ and $\eta_h\in \Sigma_h$ it holds  
\begin{equation}\label{eqn:StartA}
\|\eb\|_{\Omega_h}^2\lesssim 
a_h(\bw^e-\bw_h,\eb) + c_h(\eta^e-\eta_h,\eb) + a_h(\bw_h,\eb)  +c_h(\eta_h,\eb) + h^{\frac32 k+1}\|\eb\|_{\Omega_h}.
\end{equation}
We now let $\bw_h=\pi_V\bw^e$, given by Lemma~\ref{lem:Fortin}
and note that $\bw_h$ is divergence-free.
By Theorem~\ref{thm:Main} and Proposition~\ref{Prop1}, 
along with standard arguments, one estimates 
\begin{equation}
\label{eqn:StartS1}
\begin{split}
&a_h(\bw^e-\bw_h,\eb)  + c_h(\eta^e-\eta_h,\eb) \\
&\qquad\lesssim h (\|\bw\|_{H^2(\Omega)}+ \|\eta\|_{H^1(\Omega)}) (\tnrma{\eb}+h^{\frac12} \|\nab \eb\|_{\Gamma_h})\lesssim h^{k+1}\|\eb\|_{\Omega_h}.
\end{split}
\end{equation}

Next, we write
\begin{equation}
\label{eqn:StartS2}
\begin{split}
a_h(\bw_h,&\eb)  = a_h(\bw_h,\bu^e)-a_h(\bw_h,\uh)\\
& = a_h(\bw_h,\bu^e)-{\bm f}_h(\bw_h) + b_h(p_h,\bw_h)+c_h(\lambda_h,\bw_h)+i_h(\bw_h,\bu_h)\\
& = \big[a_h(\bw_h,\bu^e)+(\Delta \bu^e,\bw_h)_{\Omega_h}\big] + ({\bm f}^e-{\bm f},\bw_h)_{\Omega_h} +c_h(\lambda_h-p^e,\bw_h)+i_h(\bw_h,\bu_h).
\end{split}
\end{equation}
Here, we have used the fact that ${\bm f}^e = -\Delta \bu^e+\nab \bar p^e $, $\bar p^e = p^e$ in $\bar \Omega_h$,  integration by parts,
and the divergence-free property of $\bw_h$.

Integrating by parts and applying Lemma~\ref{lem:Consist} with $m=2$
yields
\begin{align*}
\big|a_h(\bw_h,\bu^e) +(\Delta \bu^e,\bw_h)_{\Omega_h}\big|
&\lesssim \left| -(\bn_h\cdot \nab \bw_h,\bu^e)_{\Gamma_h}+{\gamma_n} h^{-1} (\bu^e,\bw_h)_{\Gamma_h}\right|
+ h^{k+1}  \|\bw_h\|_{H^2(\Omega_h^{\calT})}\\
&\lesssim |(\bn_h\cdot \nab \bw_h,\bu^e)_{\Gamma_h}|+{h}^{-1}|(\bu^e,\bw_h)_{\Gamma_h}|
+ h^{k+1}  \|\eb\|_{\Omega_h}.
\end{align*}
We then estimate
\begin{align*}
h^{-1}(\bu^e,\bw_h)_{\Gamma_h} 
&= h^{-1} (\bu^e-\bu\circ \bPsi_h^{-1},\bw^e-\bw\circ \bPsi_h^{-1})_{\Gamma_h}
+h^{-1}(\bu^e-\bu\circ \bPsi_h^{-1},\bw_h-\bw^e)_{\Gamma_h}\\
&\lesssim (h^{2k+1}+h^{k+\frac32}) \|\bu\|_{H^2(\Omega)} \|\bw\|_{H^2(\Omega)}\lesssim h^{k+\frac32} \|\eb\|_{\Omega_h}.
\end{align*}
The same reasoning shows $\|\bu^e\|_{\Gamma_h}\lesssim h^{k+1}$,
and hence we get
\[
\begin{split}
|(\bn_h\cdot \nab \bw_h,\bu^e)_{\Gamma_h}| &\lesssim
h^{k+1}\|\nabla \bw_h\|_{\Gamma_h}
\lesssim
h^{k+\frac12}\|\nabla \bw_h\|_{\Omega^\Gamma_h}
\lesssim
h^{k+\frac12}(\|\nabla (\bw_h-\bw^e)\|_{\Omega^\Gamma_h} +  \|\nabla \bw^e\|_{\Omega^\Gamma_h})\\
	&\lesssim
h^{k+1}\|\bw\|_{H^2(\Omega)}
\lesssim 
h^{k+1}\|\eb\|_{\Omega_h},
\end{split}
\]
where we used \cite[Lemma 4.10]{elliott2013finite}
in the second-to-last inequality. Combining the previous three
estimates, we obtain
\begin{equation}\label{eqn:StartS2A}
\big|a_h(\bw_h,\bu^e) +(\Delta \bu^e,\bw_h)_{\Omega_h}\big|
\lesssim h^{k+1}\|\eb\|_{\Omega_h}.
\end{equation}

To estimate $c_h(\bw_h,\lambda_h-p^e)$, we use that $\|\lambda_h-p\|_{L^2(\Gamma_h)}\le C h^k$, which follows 
from the triangle inequality,
$\|\lambda_h-p^e\|_{L^2(\Gamma_h)}\le 
\|\lambda_h-\Pi_\Sigma \lambda^e\|_{L^2(\Gamma_h)}+\|\Pi_\Sigma \lambda^e-\lambda^e\|_{L^2(\Gamma_h)}+\|\lambda^e-p^e\|_{L^2(\Gamma_h)},
$
and then an application of  the convergence estimate \eqref{eqn:ThmEstimate}, interpolation and geometric error estimates as in \eqref{aux1140}--\eqref{aux1145}.
We then use $\bw=0$ on $\Gamma$ to obtain 
\begin{equation}
\label{eqn:StartS2B}
\begin{split}
c_h(\bw_h,\lambda_h-p^e)
&\lesssim  h^k\|\bw_h\|_{\Gamma_h} 
\lesssim  h^k(\|\bw_h-\bw^e\|_{\Gamma_h}+
\|\bw^e-\bw\circ\bPsi_h^{-1}\|_{\Gamma_h})\\ 
&\lesssim h^{k+\frac32}\|\bw\|_{H^2(\Omega)} \lesssim  h^{k+\frac32} \|\eb\|_{\Omega_h}.
\end{split}
\end{equation}
Due to Proposition~\ref{Prop1} we have
\begin{equation}\label{eqn:StartS2C}
(\fb^e-\fb,\bw_h)_{\Omega_h}\lesssim h^{2k+\frac52} \tnrma{\bw_h} \lesssim h^{2k+\frac52} \|\bw\|_{H^2(\Omega)} 
\lesssim h^{2k+\frac52} \|\eb\|_{\Omega_h}.
\end{equation}
Finally, applying estimates in \eqref{aux705}--\eqref{aux707}, we get
\begin{equation}
\label{eqn:StartS2D}
\begin{split}
i_h(\bw_h,\bu_h)&= [i_h(\bw_h-\bw^e,\bu_h-\bu^e)+i_h(\bw,\bu_h-\bu^e)] + [i_h(\bw_h-\bw^e,\bu^e)+i_h(\bw^e,\bu^e)]\\
&\lesssim h \|\bw\|_{H^2(\Omega)}(\tnrma{\eb}+ h^k)\lesssim h^{k+1} \|\eb\|_{\Omega_h}. \end{split}
\end{equation}
We then apply the estimates \eqref{eqn:StartS2A}--\eqref{eqn:StartS2D}
to \eqref{eqn:StartS2}
to conclude
\begin{align}\label{eqn:StartS2End}
a_h(\bw_h,\eb) 
&\lesssim h^{k+1}\|\eb\|_{\Omega_h}.
\end{align}

Also using 	$c_h(\eta_h,\bu_h)=0$ and $\|\bu^e\|_{L^{\infty}(\Gamma_h)}\lesssim h^{k+1}$ again, we estimate
\begin{equation}\label{eqn:StartS3}
c_h(\eta_h,\eb) =  c_h(\eta_h,\bu) \lesssim h^{k+1}\|\eta_h\|_{\Gamma_h}\lesssim 
h^{k+1}\|\eta\|_{H^1(\Omega)}
\lesssim 
h^{k+1}\|\eb\|_{\Omega_h}.
\end{equation}
The estimate for $\|\eb\|_{\Omega_h}$ in \eqref{vel:L2} now follows
by applying the estimates \eqref{eqn:StartS1}, \eqref{eqn:StartS2End},
and \eqref{eqn:StartS3} to \eqref{eqn:StartA}.
\end{proof}

\section{Numerical Examples}\label{sec:Numerics}

The method is implemented using \texttt{Netgen}/\texttt{NGSolve}\footnote{See also \url{ngsolve.org}}~\cite{Sch97,Sch14} with an add-on for the implementation of the velocity space and \texttt{ngsxfem}~\cite{LHPvW21}, an add-on to NGSolve for unfitted finite element discretizations. The experiments are fully reproducible; see the Data Availability statement below.

{
\subsubsection*{Implementation}
For $k=2$, we implemented the space manually in a small C++ add-on module to \texttt{NGSolve}. An alternative approach is to compute the basis automatically runtime using the Conforming Trefftz framework presented in \cite{jcmeyermaster}. This is an extention of the Embedded Trefftz method \cite{LS23}. The idea is to start with a discontinuous Galerkin (DG) space and then compute an embedding to generate the desired subspace. In the Embedded Trefftz method, the element wise kernel of a given differential operator is removed to create a Trefftz DG subspace, while the Conforming Trefftz setting enforces conformity conditions on the boundary of each element to generate a desired finite element space. In this setting, we start with a Piola mapped vector-valued DG space and enforce the continuity at the vertices and Gauss-Lobatto nodes through the Trefftz embedding. For \texttt{NGSolve}, this functionality is implemented in the add-on package \texttt{NGSTrefftz} \cite{Sto22}.
}

{
\subsection{Example 1: Stability} 
As a first test, we investigate how small intersections between the mesh and the geometry, as well as mesh deformation, affect the eigenvalues closest to zero and the condition number of the resulting system matrix.
}

\subsubsection*{Setup}
{
We consider the background domain $\Oext = (0,1)^2$. Similar to \cite{dPLM17}, we consider a rounded square $\Omega = \{(x,y)^T\in\RR^2\mid \phi(x,y) \coloneqq ((x - 0.5)^{12} + (y - 0.5)^{12})^{1/12} - (r + \varepsilon) < 0\}$. The macro mesh is a symmetric and structured crossed mesh. These elements are then split barycentrically. The resulting mesh is shown in Figure~\ref{fig:ex1_mesh}. By choosing $r$ appropriately and varying $\varepsilon>0$, we can control the size of the small cuts between the mesh and geometry; see also the right of Figure~\ref{fig:ex1_mesh}.}

\begin{figure}
  \centering
  \includegraphics[width=6cm]{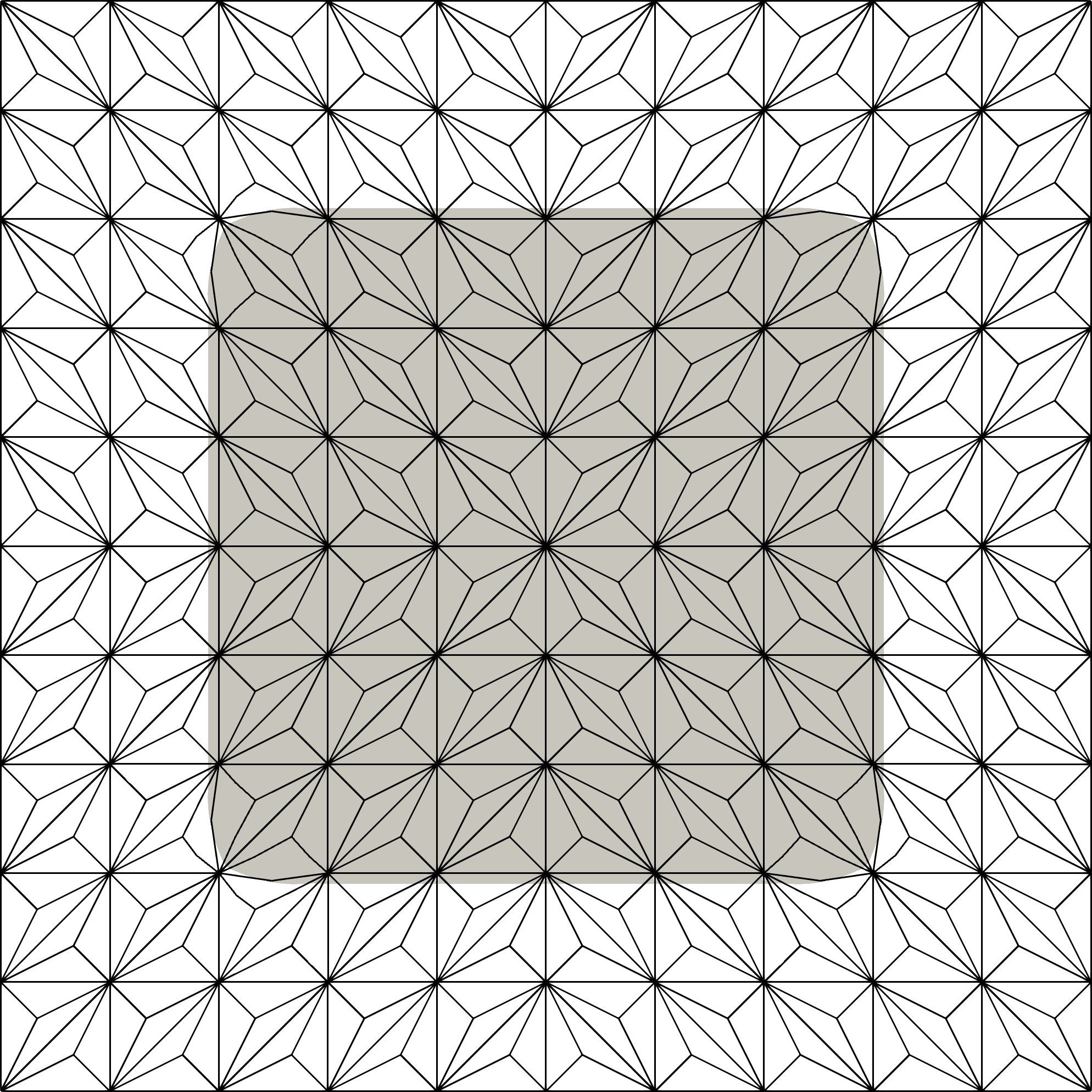}
  \hspace{1cm}
  \includegraphics{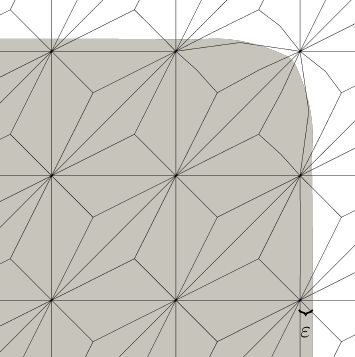}
  \caption{Mesh and Geometry used in Example 1 for the Small Cut Problem.}
  \label{fig:ex1_mesh}
\end{figure}

\subsubsection*{Results}
{The resulting system matrix is indefinite. 
To verify that the stabilization controls the small-cut problem, we track the largest negative eigenvalue, $\lambda_\textup{max}^-$, and the smallest positive eigenvalue, $\lambda_\textup{min}^+$, of the system matrix as functions of the minimum cut ratio. We perform these computations on a fixed mesh obtained from 10 horizontal and 10 vertical subdivisions of the unit square, divided into triangles. Note that while the mesh stays fixed in these experiments, the changing levelset fucntion leads to a changing deformation of the mesh. 

We take $k=2$ and $k_\lambda=1$, $\gamma_\lambda=0.1$, $\gamma_n=40$ and $\gamma_\textup{gp}\in\{0.01, 0.1, 1\}$. The results are shown in Figure~\ref{fig:example1.eigenvals-gp-study}. We can see that both eigenvalues vary significantly. However, the dependence is not proportional to the smallest cut $\min \abs{K\cap\Oh}/\abs{K}$. While the smallest cut varies over ten orders of magnitude, the largest variance in the eigenvalues, realized by the smallest positive eigenvalue with the smallest stabilization parameter, is four orders of magnitude. For larger ghost-penalty parameters, this reduces to one order of magnitude. We also see that an increasing ghost-penalty parameter helps to bound the smallest positive eigenvalue further away from zero. However, the largest negative eigenvalue becomes smaller in magnitude. Nevertheless, this negative effect is not as large as the positive effect on the smallest positive eigenvalue, leading to a net positive effect. Furthermore, we note that the changing mesh deformation also has an influence on the eigenvalues.
}

\begin{figure}
  \centering
  \includegraphics{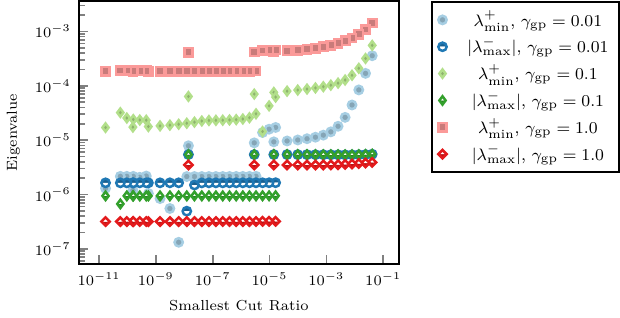}
  \caption{{Smallest positive and largest negative eigenvalues of the system matrix for varying cut configurations and ghost-penalty parameters in Example 1.}}
  \label{fig:example1.eigenvals-gp-study}
\end{figure}

{Furthermore, we also consider the condition number $\kappa=\abs{\lambda_\textup{max}}/\abs{\lambda_\textup{min}}$ of the system matrix under mesh refinement. We consider a series of five meshes where the unit square is divided both horizontally and vertically in $8 \cdot 2^{i}$, $i=0,\ldots,4$ elements, each of which is divided into triangles and then split barycentrically. The results are shown in Table~\ref{tab.condition}, where
the rate is respect to the mesh parameter. Again,
we see that the condition number varies, but its dependence
is not proportional to the cut ratio.
In fact, it appears to depend on the size of the deformation, which decreases with mesh refinement. After several refinements, the quadratic scaling of the condition number with respect to mesh size becomes the more significant factor.

}

\begin{table}
  \caption{{Condition number estimates over a series of meshes in Example 1}}
  \label{tab.condition}
  \begin{tabular}{lllllr}
    \toprule
    lvl & Cut Ratio & $\abs{\lambda_\textup{min}}$ & $\abs{\lambda_\textup{max}}$ & $\kappa$ & Rate\\
    \midrule
    0 & $1.34\cdot10^{-1}$ & $2.74\cdot10^{-6}$ & $8.27\cdot10^{ 3}$ & $3.01\cdot10^{ 9}$ & ---\\
    1 & $1.37\cdot10^{-1}$ & $1.51\cdot10^{-6}$ & $1.21\cdot10^{ 3}$ & $8.01\cdot10^{ 8}$ & $-1.9$\\
    2 & $7.35\cdot10^{-3}$ & $5.21\cdot10^{-7}$ & $6.47\cdot10^{ 1}$ & $1.24\cdot10^{ 8}$ & $-2.7$\\
    3 & $1.35\cdot10^{-4}$ & $1.29\cdot10^{-7}$ & $1.02\cdot10^{ 2}$ & $7.97\cdot10^{ 8}$ & $2.7$\\
    4 & $5.44\cdot10^{-4}$ & $3.21\cdot10^{-8}$ & $9.29\cdot10^{ 1}$ & $2.89\cdot10^{ 9}$ & $1.9$\\
    5 & $4.24\cdot10^{-4}$ & $7.98\cdot10^{-9}$ & $9.66\cdot10^{ 1}$ & $1.21\cdot10^{10}$ & $2.1$\\
    \bottomrule
  \end{tabular}
\end{table}

\subsection{{Example 2: Convergence}}
{As a second example, we consider the convergence of the method for an example on a simple but smooth domain.}

\subsubsection*{Setup}
We consider the background domain $\Oext = (-1,1)^2$. The domain is given by $\Omega = \{(x,y)^T\in\RR^2\mid \phi(x,y) \coloneqq (x^4 + y^4){^{1/4} - \nicefrac{1}{\sqrt{2}}} < 0\}$. For the exact solution we choose
\begin{equation*}
  \ub_\text{ex} = \begin{pmatrix}
    4y^3 \cos(2 \pi (x^4 + y^4) )\\ -4x^3 \cos(2 \pi (x^4 + y^4))
  \end{pmatrix},
  \quad
\quad
  p_\text{ex} = \sin(\pi x y) + x^3 + y^3,
\end{equation*}
and the right-hand side is computed such that $\fb = -\nu\Delta\ub_\text{ex} + \nabla p_\text{ex}$ with $\nu=1$.

\subsubsection*{Results}

We take the initial macro mesh with {$h_0 = 0.4$}, perform five uniform refinements of this macro mesh, and compute the solution on the Alfeld split of each refined macro mesh. The Nitsche parameter is set to $\gamma_n = 40$, and the velocity ghost-penalty and Lagrange multiplier stabilization parameters are $\gamma_\textup{gp} = \gamma_{\lambda} = 0.1$.

We run our experiments with $k=2$, i.e., the $\PP_2\!-\!\PP_1^{\rm disc}$ Scott–Vogelius element, and set $k_\lambda = k-1$. By default, our implementation uses a higher-order geometry approximation satisfying \eqref{GeomError}, as suggested by the theory. We also experimented with a polygonal approximation of $\Gamma$ obtained as the zero-level set of the $P_1$ interpolant of the distance function.

The resulting errors in the velocity, velocity gradient, divergence, and pressure are shown in Figure~\ref{fig:example2.convergence}. The discrete velocity and divergence on the coarsest mesh are displayed in Figure~\ref{fig:example2.sol}. We observe that the velocity error converges at the optimal rates in both the $H^1$ and $L^2$ norms when the geometry approximation satisfies \eqref{GeomError}. We note that the post-processed pressure also exhibits the optimal $O(h^{k})$ convergence.
If the approximation of $\Omega$ is relaxed to the polygonal one, the velocity convergence rate deteriorates to $O(h^2)$ in the $L^2$ norm and to $O(h^{3/2})$ in the $H^1$ norm. The pressure convergence appears to degrade as well.
Finally, in agreement with the theory, the recovered velocity is divergence-free on all meshes.

\begin{figure}
  \centering
  \includegraphics{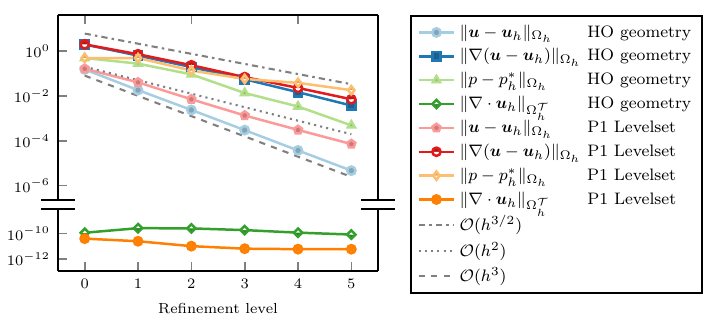}
  \caption{Convergence results for Example 2 {with $k=2$ and $k_\lambda=1$}.}
  \label{fig:example2.convergence}
\end{figure}

\begin{figure}
  \centering
  \includegraphics[width=7cm]{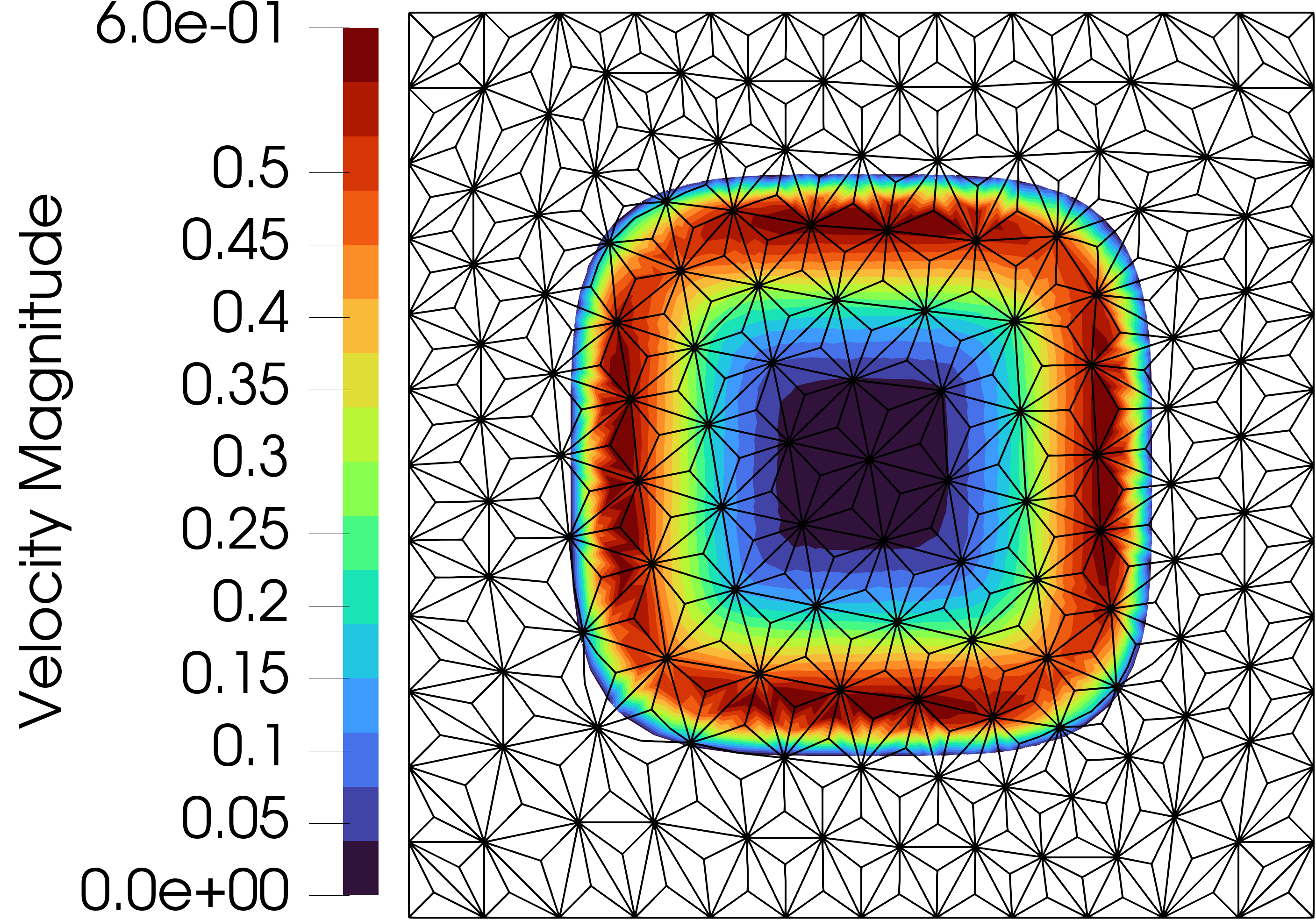}
  \includegraphics[width=7cm]{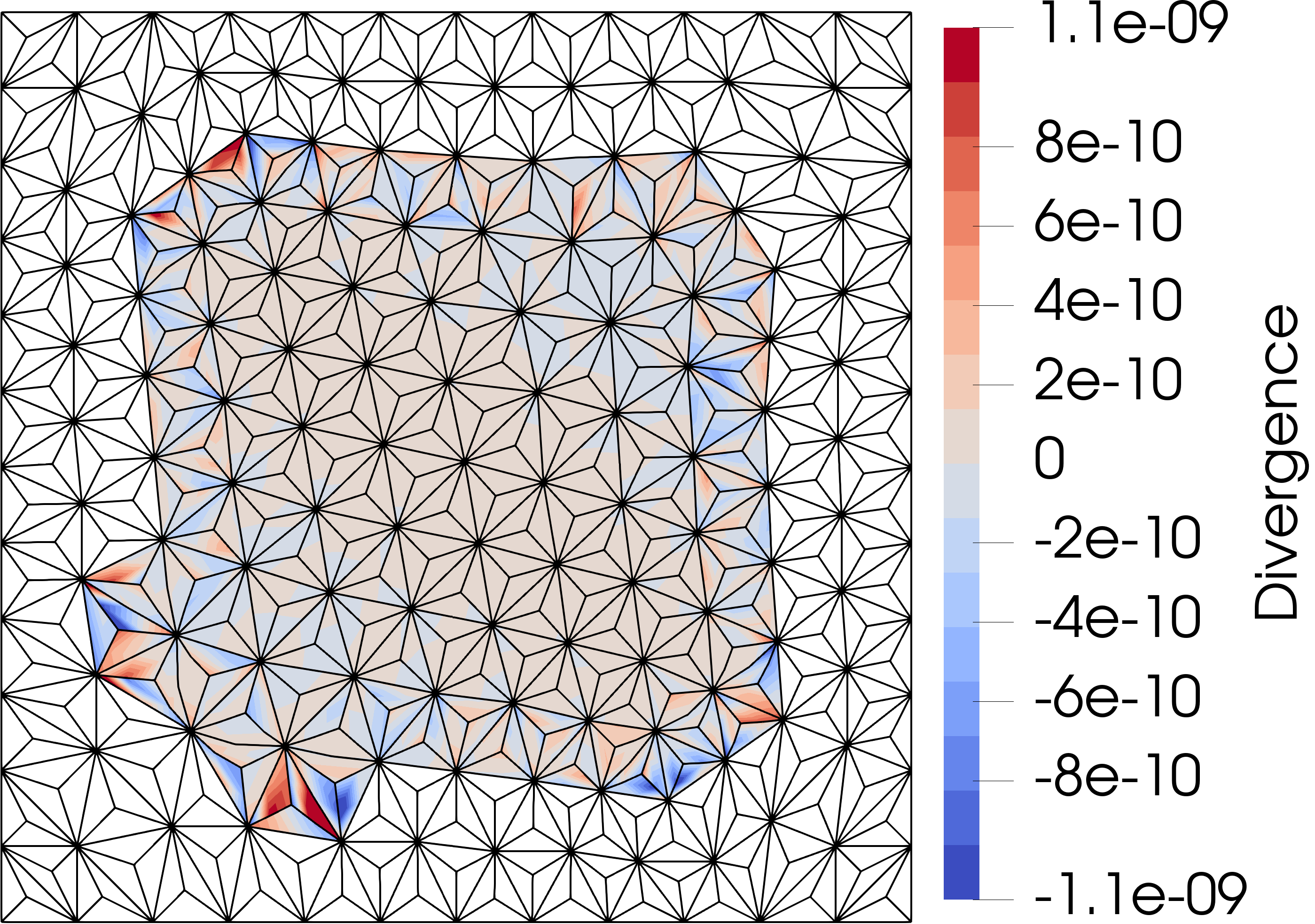}
  \caption{Velocity solution on the levelset domain (left) and the discrete velocity divergence (right), both on the curved mesh.}
  \label{fig:example2.sol}
\end{figure}

{
We also run the experiment for $k\in\{3,4\}$ and $k_\lambda = k-1$ using the automatic runtime generation of the finite element space using the Conforming Trefftz framework. The results can be seen in \Cref{fig.example2:conformingTrefftz}. We observe optimal-order convergence
for all unknowns.
Note that for $k=4$, we did not compute the last refinement level due to the limitations of the direct solver used in the experiments.
}

\begin{figure}
  \centering
  \includegraphics{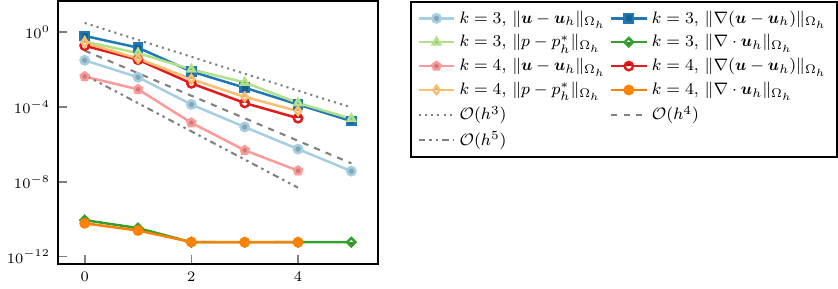}
  \caption{Convergence results for Example 2 using the Conforming Trefftz implementation of the velocity space.}
  \label{fig.example2:conformingTrefftz}
\end{figure}

\subsection{{Example 3: Pressure Robustness}}\label{sec:noFlow}
{As a third example, we consider a no flow problem to illustrate the pressure dependence of the velocity error in Corollary~\ref{mainCor}, see also Remark~\ref{rmk.pressure-robustness}.}

\subsubsection*{Setup}
We consider slightly more complex geometry
\begin{equation*}
  \Omega \coloneqq \{(r, \theta)\in\RR_+\times[-\pi, \pi)\mid r - 0.7 + 0.2\cos(6 \theta) < 0 \}
\end{equation*}
of a smoothed six pointed star. We consider a no flow problem with data ${\bm f} = 5(x^4, y^4)^T$ resulting in the exact solution $\ub=(0, 0)^T$ and $p= x^5+y^5$. This pressure has mean zero on the exact geometry.

\subsubsection*{Results}
The initial macro mesh is constructed with $h_0 = 0.3$, and we again consider a sequence of uniform mesh refinements. Each mesh is barycentrically refined to define the finite element spaces. In this experiment, we use only the higher-order approximation of the geometry but vary the polynomial degree of the Lagrange multiplier, taking $k_\lambda \in \{k-1, k\}$ {with $k=2$}. Both choices of $k_\lambda$ are admissible according to our analysis.

The convergence results are shown in \Cref{fig:example3.convergence}. We observe that the velocity finite element solution is divergence free, but the velocity error, while small, is not zero. Moreover, choosing $k_\lambda = k$ yields a smaller velocity error with a faster decay rate compared to $k_\lambda = k-1$. 
{In particular, we observe $\tnrma{\bu_h} = O(h^{k_\lambda+1})$
which agrees with the estimate 
\eqref{eqn:ThmEstimate} for $\bu \equiv 0$.}
This behavior is expected, since employing a higher-order polynomial space for the Lagrange multiplier enforces the condition $\bu_h \cdot \bn_h = 0$ on $\Gamma_h$ more strongly, which in turn leads to better decoupling of the velocity and pressure errors.

\begin{figure}
  \centering
  \includegraphics{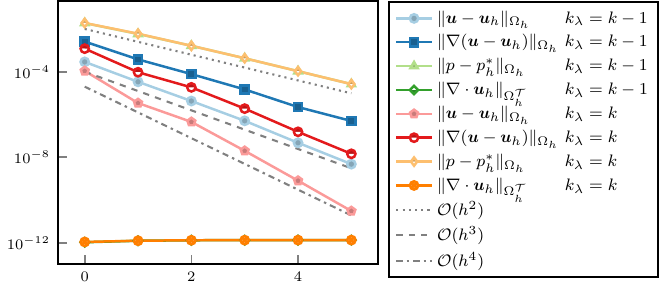}
  \caption{Convergence results for {the no flow case} Example 3.}
  \label{fig:example3.convergence}
\end{figure}

\section*{Data Availability Statement}
The numerical results are reproducible using the code available on github under \url{https://github.com/hvonwah/repro-unfitted-div-free-stokes}. The code and results are also archived on zenodo \url{https://doi.org/10.5281/zenodo.21460130}.

\printbibliography

\appendix

\section{Proof of Theorem~\ref{Th:fitted}} \label{sec:A}

{The mapping $\bPhi_h$ satisfies for $0\le m\le k+1$
(cf.~\cite[(3.17), (3.9) and Theorem 3.11]{lehrenfeld2018analysis})}
\begin{align}\label{PhiBounds}
|\bPhi_h|_{W^{m,\infty}(\tilde K)}\lesssim 1,\qquad |\bPhi^{-1}_h|_{W^{m,\infty}(K)}\lesssim 1
\qquad \forall \tilde K\in \Th,\ \forall K\in \calT_h^{\rm Al}.
\end{align}
{In addition, by \cite[(3.38)]{lehrenfeld2018analysis},
there holds
\begin{equation}\label{eqn:detPhi}
\|D\bPhi_h-{\bm I}\|_{L^\infty(\Omega_h^{\tilde \calT})}\lesssim h.
\end{equation}
}

By the shape-regularity and quasi-uniformity 
of $\Th$,
there holds $\det(D\varphi_{\tilde K}) = d! |\tilde K| \approx h^d$, $|\varphi_{\tilde K}|_{W^{m,\infty}(\hat K)}\lesssim h^m$,
and $|\varphi^{-1}_{\tilde K}|_{W^{m,\infty}(\tilde K)}\lesssim h^{-m}$
for $m=0,1$ and $\tilde K\in \Th$. Consequently by \eqref{PhiBounds}, we have for all $K\in \calT_h^{\rm Al}$
\begin{alignat}{3}\label{eqn:varphiKBounds}
&\det(D\varphi_K)\approx h^d,\qquad&&|\varphi_K|_{W^{m,\infty}(\hat T)}\lesssim h^m,\qquad &&|\varphi_K^{-1}|_{W^{m,\infty}(K)}\lesssim h^{-1}\ \ (1\le m\le (k+1)).
\end{alignat}

As a first step to prove Theorem~\ref{Th:fitted},  
we provide an equivalent definition of  $\bV_h$, stating it as a macro-like finite element space.
For an affine element $\tilde T\in \Thm$ in the macro triangulation,
let $\tilde T^{{\rm Al}} = \{\tilde K_1,\ldots,\tilde K_{d+1}\}$ be 
the local Alfeld split. 
Let $\varphi_{\tilde T}:\hat T\to \tilde T$
be an affine mapping, and set $\varphi_T = \bPhi_h\circ \varphi_{\tilde T}:\hat T\to T$,
where $T = \bPhi_h(\tilde T)$; see Figure~\ref{fig:Mappings1}.
Note that $\varphi_T$ is a continuous piecewise polynomial
with respect to the reference Alfeld split $\hat T^{\rm Al} = \{\hat K_1,\ldots,\hat K_{d+1}\}$, but it is not $C^1$ because $\bPhi_h$ is not 
in general $C^1$ on $\tilde T$. We see that $\varphi_T$
inherits the scaling found in \eqref{eqn:varphiKBounds}, i.e.,
\begin{alignat}{3}\label{eqn:varphiTBounds}
&\det(D\varphi_T)\approx h^d,\qquad&&|\varphi_T|_{W^{m,\infty}(\hat T)}\lesssim h^m,\qquad &&|\varphi_T^{-1}|_{W^{m,\infty}(T)}\lesssim h^{-1}\ \ (1\le m\le (k+1)),
\end{alignat}
where the norms are understood to be piecewise defined with respect to the local splits.

We then write an equivalent definition 
of the velocity space $\bV_h$, now with respect to
the piecewise differomorphisms $\varphi_T$:
\begin{align*}
\bV_h
& = \left\{\bv_h\in \bL^2(\Omega_{\calT})\middle\vert \begin{array}{c} \bv_h|_T = (\frac1{\det(D\varphi_T)}D\varphi_T \hat \bv_h)\circ \varphi_T^{-1},\ \hat \bv_h\in [\mathbb{P}^k_{\rm dc}(\hat T^{\rm Al})]^2\ \ \forall T\in \calT_h\\ \text{ and $\bv$ is
single-valued at Lagrange nodes in $\calT_h^{\rm Al}$}\end{array}\right\},
\end{align*}
where $\mathbb{P}^k_{\rm dc}(\hat T^{\rm Al})$ is the 
space of piecewise polynomials of degree $\le k$ with respect
to the reference Alfeld split $\hat T^{\rm Al}$. 
Likewise, the pressure
space is equivalently written as
\begin{align*}
Q_h = \{q\in L^2(\Omega^{\calT}_h):\ q_h|_T\circ \varphi_T\in \mathbb{P}^{k-1}_{\rm dc}(\hat T^{\rm Al})\ \forall T\in \calT_h\}.
\end{align*}

\begin{figure}
\centering
\includegraphics{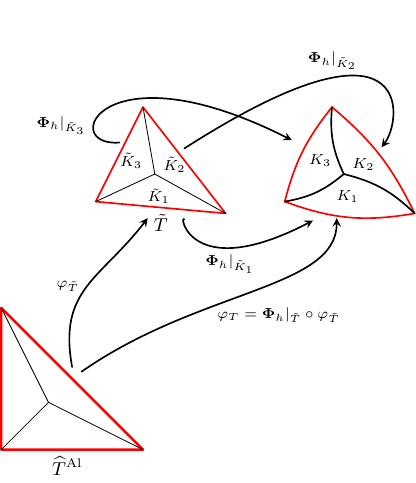}
\caption{\label{fig:Mappings1} A pictorial description
of the construction of $\varphi_T$. Note
that $\bPhi_h$ is continuous piecewise polynomial,
but is not $C^1$ on $\tilde T$.}
\end{figure}

\begin{figure}
\centering
\includegraphics{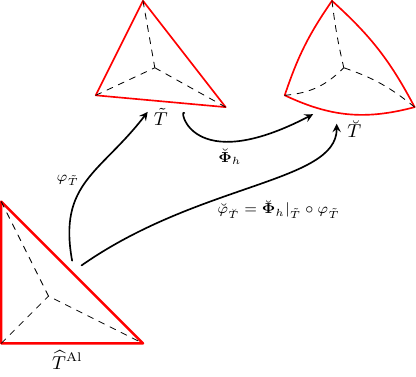}
\caption{\label{fig:Mappings2}A pictorial description of the mappoing $\breve \varphi_{\breve T}$. The mapping
$\breve\bPhi_h$ is a polynomial on $\tilde T$ and hence smooth.}
\end{figure}

Next, we define an auxiliary space closely related to the one given in \cite{NO21,DN24}
that we will use in the proof of Theorem~\ref{Th:fitted}. We set
\begin{align*}
\breve{\mathcal{T}}_h = \{\breve{T} = \breve{\bPhi}_h(\tilde T):\   \tilde T \in \Thm\},
\end{align*}
where $\breve{\bPhi}_h$ is the piecewise polynomial mapping
with respect to $\Thm$, as constructed in \cite{Leh16}.
Note that $\bPhi_h$ and $\breve{\bPhi}_h$ are both piecewise polynomials,
but $\bPhi_h$ is defined with respect to $\Th$,
whereas $\breve{\bPhi}_h$ is defined with respect to $\Thm$.
Therefore, we have $\bPhi_h(\tilde T)\neq \breve\bPhi_h(\tilde T)$ ($\tilde T\in \Thm$) in general. 

For each $\breve{T}\in \breve{\calT}_h$, let $\breve{\varphi}_{\breve T}:\hat T\to \breve{T}$
be given by $\breve{\varphi}_{\breve T} = \breve{\bPhi}_h\circ \varphi_{\tilde T}$,
where $\varphi_{\tilde T}:\hat T\to \tilde T\in \Thm$ is the same
affine diffeomorphism as above (where $\breve T = \breve{\bPhi}_h(\tilde T)$); 
see Figure~\ref{fig:Mappings2}.
{The mapping $\breve\bPhi_h$ satisfies similar bounds
as in \eqref{PhiBounds} due to the results in \cite{lehrenfeld2018analysis}}, i.e.,
\begin{align}\label{brevePhiBounds}
|\breve \bPhi_h|_{W^{m,\infty}(\tilde T)}\lesssim 1,\qquad |\breve \bPhi^{-1}_h|_{W^{m,\infty}(\breve T)}
\qquad \forall \tilde T\in \Thm,\ \forall \breve T\in \breve\calT_h,
\end{align}
{as well as
\begin{equation}\label{eqn:detBrevePhi}
\|D\breve \bPhi_h - {\bm I}\|_{L^\infty(\Omega_h^{\tilde \calT})}\lesssim h.
\end{equation}
}
Then the mapping $\breve \varphi_{\breve T}$ satisfies
estimates analogous to \eqref{eqn:varphiTBounds}, i.e.,
\begin{alignat}{3}\label{eqn:brevevarphiTBounds}
 &\det(D\breve\varphi_{\breve T})\approx h^d,\qquad&&|\breve\varphi_{\breve T}|_{W^{m,\infty}(\hat T)}\lesssim h^m,\qquad &&|\breve\varphi_{\breve T}^{-1}|_{W^{m,\infty}(\breve T)}\lesssim h^{-1}\ \ (1\le m\le (k+1)).
\end{alignat}

Let $\bbP_{\rm c}^k(\hat T^{\rm Al}) = \bbP_{\rm dc}^k(\hat T^{\rm Al})\cap H^1(\hat T)$ be the $k$th-degree local Lagrange space
with respect to the local Alfeld split.
Set $\breve{\Omega}_h^{\calT}$ to be the associated
domain of the mesh $\breve \calT_h$ and define
\begin{align*}
\breve{\bV}_h = \left\{\breve{\bv}_h\in \bL^2(\breve{\Omega}^{\calT}_h)\middle\vert \begin{array}{c}\ \breve{\bv}_h|_{\breve T} = (\frac1{\det(D\breve{\varphi}_{\breve T})} D\breve{\varphi}_{\breve T} \hat{\breve{\bv}}_h)\circ \breve{\varphi}_{\breve T}^{-1}\ \exists \hat{\breve{\bv}}_h\in [\mathbb{P}^k_{\rm c}(\hat T^{\rm Al})]^2\ \ \forall \breve{T}\in \breve\calT_h\\ \text{$\breve{\bv}_h$ is single-valued at Lagrange nodes in }\breve{\calT}_h\end{array}\right\},
\end{align*}
where the Lagrange nodes of $\breve{\calT}_h$ are the $\breve\bPhi_h$-mapped
Lagrange nodes of $\Thm$.

Set $\breve{\bV}_h^0$ to be the subspace of functions in $\breve{\bV}_h$ that
vanishes on $\p \breve{\Omega}^{\calT}_h$, and
define the discrete pressure spaces with respect to the macro mesh:
\begin{align*}
\breve{Q}_h = \{\breve{q}_h\in L^2(\breve{\Omega}^{\calT}_h)|\ \breve{q}_h\circ \breve{\varphi}_T\in \bbP_{\rm dc}^{k-1}(\hat T^{\rm Al})\},\qquad \breve{Q}_h^0 = \breve{Q}_h\cap  L^2_0(\breve{\Omega}^{\calT}_h).
\end{align*}

\begin{lem}\label{lem:BreveStable}
{There holds $\breve{\bV}^0_h\subset {\bm H}({\rm div};\breve\Omega_h^{\calT})$.}
Moreover, the pair $\breve{\bV}^0_h\times \breve{Q}^0_h$ is inf-sup stable for $k\ge d$: there exists $\breve{\beta}>0$
independent of the size of the mesh such that
\begin{align*}
\breve{\beta} \|\breve{q}_h\|_{\breve\Omega^{\calT}_h} \le \sup_{\breve{\bv}_h\in \breve{\bV}^0_h\backslash \{0\}} \frac{\int_{\Omega_{\calT}} ({\rm div}\,\breve{\bv})\breve{q}_h}{\|\breve{\bv}_h\|_{H^1(\breve\Omega^{\calT}_h)}}\qquad \forall \breve{q}_h\in \breve{Q}^0_h.
\end{align*}
\end{lem}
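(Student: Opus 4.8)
The plan is to reduce the statement to the known inf-sup stability of the isoparametric Scott--Vogelius pair established in \cite{NO21,DN24}. Those references prove stability for exactly the space $\breve{\bV}_h^0\times \breve Q_h^0$ built from a \emph{macro-element} Piola transform with a single polynomial diffeomorphism $\breve \bPhi_h$ per macro-triangle $\tilde T\in \Thm$, which is precisely the construction we have set up here. So the core of the proof is really a matter of checking that our definitions coincide with theirs and invoking their result.

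First I would verify the $\bH(\mathrm{div})$-conformity. On each $\breve T$ the function $\breve \bv_h$ is the Piola push-forward of a vector field $\hat{\breve{\bv}}_h\in [\bbP_{\rm c}^k(\hat T^{\rm Al})]^2$ that is continuous across the reference Alfeld subsimplices; since $\breve\varphi_{\breve T}$ is a continuous (globally, across the split) piecewise-polynomial homeomorphism, the Piola transform preserves normal traces across the interior split faces, so $\breve \bv_h|_{\breve T}\in \bH(\mathrm{div};\breve T)$. Across macro-element interfaces, the single-valuedness of $\breve \bv_h$ at the Lagrange nodes of $\breve\calT_h$, together with the fact that the shared face of two mapped macro-elements is a common polynomial curve traced out by both $\breve\varphi_{\breve T}$'s, forces matching normal components; this is exactly \cite[Theorem 4.2]{NO21}, restated for our notation. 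Hence $\breve\bV_h^0\subset \bH(\mathrm{div};\breve\Omega_h^{\calT})$.

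Next, for the inf-sup estimate I would appeal directly to the main stability theorem of \cite{DN24} (or \cite{NO21}). Their argument proceeds via a Fortin-type operator: given $\breve q_h\in \breve Q_h^0$, one uses the stability of the \emph{affine} Scott--Vogelius pair $[\bbP^k]^2\times \bbP_{\rm dc}^{k-1}$ on the Alfeld-split mesh $\Thm^{\rm Al}$ (valid for $k\ge d$ by \cite{SV85,Zha04}) to produce a preimage $\hat{\breve{\bv}}_h$, then transports it via the Piola map; the bounds \eqref{eqn:brevevarphiTBounds} on $\breve\varphi_{\breve T}$ and its inverse — in particular $\det(D\breve\varphi_{\breve T})\approx h^d$, $|\breve\varphi_{\breve T}|_{W^{1,\infty}}\lesssim h$, $|\breve\varphi_{\breve T}^{-1}|_{W^{1,\infty}}\lesssim h^{-1}$, and the higher-regularity bounds up to order $k+1$ — guarantee that the Piola transform is an $H^1$-isomorphism onto $\breve\bV_h^0$ with constants independent of $h$, and that $\mathrm{div}$ on the physical side equals $J^{-1}\hat\nabla\cdot(\cdot)$ on the reference side. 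The estimate \eqref{eqn:detBrevePhi}, $\|D\breve\bPhi_h-\bm I\|_{L^\infty}\lesssim h$, is what makes the mapped bilinear form a small perturbation of the affine one, so that the affine inf-sup constant survives (for $h$ small). Since our $\breve\varphi_{\breve T}$ satisfies all of these bounds, the proof carries over verbatim.

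The main obstacle — and the only place where genuine care is needed — is confirming that the space $\breve\bV_h^0$ defined here is \emph{literally} the space analyzed in \cite{NO21,DN24}, rather than merely isomorphic to it: one must check that the degrees of freedom (single-valuedness at $\breve\bPhi_h$-mapped Lagrange nodes of $\Thm$) and the macro-element Piola structure match, and that the regularity hypotheses on the mapping assumed there are implied by \eqref{brevePhiBounds}--\eqref{eqn:detBrevePhi}. Once that identification is made, the lemma is immediate. I would therefore write the proof as: (i) a short paragraph establishing $\bH(\mathrm{div})$-conformity via the normal-trace-preservation of the Piola transform and node matching; (ii) a sentence observing that $\breve\bV_h^0\times\breve Q_h^0$ is exactly the pair of \cite[\S4]{NO21} / \cite{DN24}, with the mapping bounds \eqref{eqn:brevevarphiTBounds} and \eqref{eqn:detBrevePhi} verifying their hypotheses; (iii) invoking their inf-sup theorem for $k\ge d$.

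\begin{proof}
We first verify $\breve{\bV}_h^0\subset \bH(\mathrm{div};\breve\Omega_h^{\calT})$. Fix $\breve T = \breve\bPhi_h(\tilde T)\in \breve\calT_h$ and let $\breve\bv_h|_{\breve T} = (\tfrac1{\det(D\breve\varphi_{\breve T})}D\breve\varphi_{\breve T}\,\hat{\breve\bv}_h)\circ\breve\varphi_{\breve T}^{-1}$ with $\hat{\breve\bv}_h\in [\bbP_{\rm c}^k(\hat T^{\rm Al})]^2$. Since $\hat{\breve\bv}_h$ is continuous across the reference Alfeld subsimplices and $\breve\varphi_{\breve T}$ is a globally continuous, piecewise-polynomial diffeomorphism with respect to $\hat T^{\rm Al}$, the Piola (contravariant) transform preserves continuity of normal components across the interior split faces; hence $\breve\bv_h|_{\breve T}\in \bH(\mathrm{div};\breve T)$. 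Across an interface $\breve F = \breve T_1\cap \breve T_2$ of two mapped macro-elements, the shared face is the common image of an affine face of $\tilde T_1\cap\tilde T_2$ under $\breve\bPhi_h$, and the single-valuedness of $\breve\bv_h$ at the $\breve\bPhi_h$-mapped Lagrange nodes of $\Thm$ forces the polynomial normal traces from either side to coincide there, hence everywhere on $\breve F$. This is precisely the statement of \cite[Theorem~4.2]{NO21} in the present notation. Therefore $\breve\bv_h\in \bH(\mathrm{div};\breve\Omega_h^{\calT})$.

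For the inf-sup estimate, we note that the pair $\breve\bV_h^0\times \breve Q_h^0$ is exactly the macro-element isoparametric Scott--Vogelius pair analyzed in \cite{NO21,DN24}: it is built from the affine Alfeld-split space $[\bbP^k_{\rm c}(\hat T^{\rm Al})]^2$ on the reference macro-element, transported by the macro-element Piola transform associated with the single piecewise-polynomial mapping $\breve\bPhi_h$, and equipped with the $\breve\bPhi_h$-mapped Lagrange degrees of freedom. The regularity hypotheses required in those references on the mapping are verified here by \eqref{brevePhiBounds} and \eqref{eqn:detBrevePhi} (equivalently by the resulting scaling bounds \eqref{eqn:brevevarphiTBounds} on $\breve\varphi_{\breve T}$); in particular, $\|D\breve\bPhi_h - \bm I\|_{L^\infty(\Omega_h^{\tilde\calT})}\lesssim h$ ensures that the mapped divergence form is an $O(h)$ perturbation of its affine counterpart. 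Since the affine pair $[\bbP^k]^d\times\bbP^{k-1}_{\rm dc}$ is inf-sup stable on Alfeld-split simplicial meshes for $k\ge d$ \cite{SV85,Zha04,NO21}, the argument of \cite[Theorem~3.1]{DN24} (see also \cite[\S4]{NO21}) yields a constant $\breve\beta>0$, independent of the mesh size, such that
\begin{equation*}
\breve\beta\,\|\breve q_h\|_{\breve\Omega^{\calT}_h} \le \sup_{\breve\bv_h\in \breve\bV_h^0\setminus\{0\}} \frac{\int_{\breve\Omega^{\calT}_h}(\mathrm{div}\,\breve\bv_h)\,\breve q_h}{\|\breve\bv_h\|_{H^1(\breve\Omega^{\calT}_h)}}\qquad \forall\,\breve q_h\in \breve Q_h^0,
\end{equation*}
which is the claimed bound.
\end{proof}
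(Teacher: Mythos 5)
Your proposal is correct and takes essentially the same route as the paper: the paper's proof consists of the single observation that the statement follows directly from the arguments of \cite[Theorems 4.2 and 4.4]{NO21} and \cite[Theorem 4.9]{DN24}, which is precisely the identification-and-citation you carry out (with the $\bH(\mathrm{div})$-conformity coming from the normal-trace preservation of the Piola transform and the inf-sup from the macro-element isoparametric Scott--Vogelius analysis in those references). Your added detail on verifying the mapping hypotheses via \eqref{brevePhiBounds}--\eqref{eqn:detBrevePhi} is a reasonable elaboration of what the paper leaves implicit.
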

\begin{proof}
The proof directly follows from the arguments 
in \cite[Theorem 4.2]{NO21}, \cite[Theorem 4.4]{NO21}, and \cite[Theorem 4.9]{DN24}.
Therefore, its proof is omitted.
\end{proof}

We establish an analogous inf-sup stability result for $\bV^0_h\times Q^0_h$ 
based on Lemma~\ref{lem:BreveStable} and a perturbation argument.
To ease presentation, we set $\breve{A}_{\breve T} =  \frac1{\det(D\breve{\varphi}_{\breve T})}D\breve{\varphi}_{\breve T}$
and $A_T = \frac1{\det(D{\varphi}_T)}D{\varphi}_T$, so that the velocity spaces are given by
\begin{align*}
\bV_h
& = \left\{\bv_h\in \bL^2(\Omega_{\calT})\middle\vert \begin{array}{c} \bv_h|_T = (A_T \hat \bv_h)\circ \varphi_T^{-1},\ \hat \bv_h\in [\mathbb{P}^k_{\rm dc}(\hat T^{\rm Al})]^2\ \ \forall T\in \calT_h\\ \text{ and $\bv_h$ is
single-valued at Lagrange nodes in $\calT_h^{\rm Al}$}\end{array}\right\},\\
\breve{\bV}_h 
&= \left\{\breve{\bv}_h\in \bL^2(\breve{\Omega}^{\calT}_h)\middle\vert \begin{array}{c}\ \breve{\bv}_h|_{\breve T} = (\breve A_{\breve T} \hat{\breve{\bv}}_h)\circ \breve{\varphi}_{\breve T}^{-1}\ \exists \hat{\breve{\bv}}_h\in [\mathbb{P}^k_{\rm c}(\hat T^{\rm Al})]^2\ \ \forall \breve{T}\in \breve\calT_h\\ \text{$\breve{\bv}_h$ is single-valued at Lagrange nodes in }\breve{\calT}_h\end{array}\right\}.
\end{align*}

From \eqref{eqn:varphiTBounds} and \eqref{eqn:brevevarphiTBounds},
we can derive (cf.~Section~\ref{app-ABounds})
\begin{equation}
\label{eqn:ABounds}
\begin{aligned}
&|A_T|_{W^{m,\infty}(\hat T)}\lesssim h^{m-d+1},\qquad &&|A_T^{-1}|_{W^{m,\infty}(\hat T)}\lesssim h^{m+d-1},\\
&|\breve A_{\breve T}|_{W^{m,\infty}(\hat T)}\lesssim h^{m-d+1},\qquad &&|\breve A_{\breve T}^{-1}|_{W^{m,\infty}(\hat T)}\lesssim h^{m+d-1}.
\end{aligned}
\end{equation}

\begin{lem}\label{lem:vPert}
For any $\breve{\bv}_h\in \breve{\bV}_h$, there exists
$\bv_h\in \bV_h$ such that ($m=0,1$)
\begin{align}\label{eqn:PertV}
|\bv_h\circ \bPhi_h-\breve{\bv}_h\circ \breve{\bPhi}_h|_{H^m(\tilde T)}\lesssim h^{2-m}\|\breve{\bv}_h\|_{H^1(\breve T)}\qquad \forall \tilde T\in \Thm,\qquad \breve T = \breve{\bPhi}_h(\tilde T).
\end{align}
\end{lem}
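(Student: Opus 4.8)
The plan is to construct $\bv_h$ by transporting $\breve{\bv}_h$ through the two isoparametric maps so that the two functions share the \emph{same} nodal values (after the appropriate Piola scalings), and then to estimate the discrepancy on the reference macro‑patch $\hat T^{\rm Al}$ using the closeness of $\bPhi_h$ and $\breve{\bPhi}_h$. Fix $\tilde T\in\Thm$, let $\breve T=\breve{\bPhi}_h(\tilde T)$, $T=\bPhi_h(\tilde T)$, and let $\hat{\breve{\bv}}_h\in[\bbP^k_{\rm c}(\hat T^{\rm Al})]^2$ be the reference data of $\breve{\bv}_h$, so that $\breve{\bv}_h\circ\breve{\bPhi}_h|_{\tilde T}=(\breve A_{\breve T}\hat{\breve{\bv}}_h)\circ\varphi_{\tilde T}^{-1}$. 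Define $\hat{\bv}_h\in[\bbP^k_{\rm dc}(\hat T^{\rm Al})]^2$ to be the piecewise degree‑$k$ Lagrange interpolant, on $\hat T^{\rm Al}$, of $w:=A_T^{-1}\breve A_{\breve T}\hat{\breve{\bv}}_h$, i.e.\ $\hat{\bv}_h(\hat a)=A_T(\hat a)^{-1}\breve A_{\breve T}(\hat a)\hat{\breve{\bv}}_h(\hat a)$ at every Lagrange node $\hat a$ of $\hat T^{\rm Al}$, using the one‑sided value of $A_T$ on each sub‑simplex, and set $\bv_h|_T=(A_T\hat{\bv}_h)\circ\varphi_{T}^{-1}$. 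The first step is to check that the resulting $\bv_h$ is a legitimate element of $\bV_h$: by construction $A_T\hat{\bv}_h$ agrees with $\breve A_{\breve T}\hat{\breve{\bv}}_h$ at all Lagrange nodes of $\hat T^{\rm Al}$, and since $\breve{\bPhi}_h$ is a \emph{polynomial} on the macro element $\tilde T$, the matrix $\breve A_{\breve T}$ is continuous on $\hat T$, hence $\breve A_{\breve T}\hat{\breve{\bv}}_h$ is continuous there; therefore $A_T\hat{\bv}_h$ — and hence $\bv_h$ — is single‑valued at the Lagrange nodes of $\calT_h^{\rm Al}$ interior to $T$ (including the ``spoke'' nodes), while across macro elements its nodal values coincide with those of $\breve{\bv}_h$, which are single‑valued because $\breve{\bv}_h\in\breve{\bV}_h$. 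Thus $\bv_h\in\bV_h$ (and $\bV_h\subset\bm H({\rm div})$ as already recorded).

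For the estimate, note that on $\tilde T$ one has $\bv_h\circ\bPhi_h-\breve{\bv}_h\circ\breve{\bPhi}_h=\big(A_T(\mathcal I w-w)\big)\circ\varphi_{\tilde T}^{-1}$, where $\mathcal I$ is the piecewise Lagrange interpolant. Changing variables to $\hat T$ using $\det(D\varphi_{\tilde T})\approx h^d$, and using $|A_T|_{W^{0,\infty}(\hat T)}\lesssim h^{1-d}$ from \eqref{eqn:ABounds}, reduces the $m=0$ bound to estimating $\|\mathcal I w-w\|_{L^2(\hat T)}$. Writing $A_T^{-1}\breve A_{\breve T}=I+E$, the interpolant reproduces the polynomial $\hat{\breve{\bv}}_h$, so $\mathcal I w-w=\mathcal I(E\hat{\breve{\bv}}_h)-E\hat{\breve{\bv}}_h$, and a standard interpolation estimate on the (unit) sub‑simplices bounds this by a Sobolev seminorm of $E\hat{\breve{\bv}}_h$, which via inverse estimates is controlled by $\|E\|_{W^{k+1,\infty}(\hat T)}\,\|\hat{\breve{\bv}}_h\|_{L^2(\hat T)}$. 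The crucial analytic input is then a bound on $E$: since $E$ is built from $(D\bPhi_h)^{-1}D\breve{\bPhi}_h$ and $\det D\bPhi_h/\det D\breve{\bPhi}_h$ pulled back to $\hat T$, one must show that $\bPhi_h$ and $\breve{\bPhi}_h$ are close in a strong enough norm — this is where the hypothesis that $\Th$ is a refinement of $\Thm$ (and the structure of Lehrenfeld's construction, in particular \eqref{PhiBounds}, \eqref{eqn:detPhi}, \eqref{brevePhiBounds}, \eqref{eqn:detBrevePhi}) is used, to conclude $\|D(\bPhi_h-\breve{\bPhi}_h)\|_{L^\infty(\tilde T)}\lesssim h^2$ and hence $\|E\|_{W^{m,\infty}(\hat T)}\lesssim h^2$. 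Combining the scalings $h^{d}\cdot h^{2(1-d)}$, the factor $h^2$ from $E$, and finally re‑expressing $\|\hat{\breve{\bv}}_h\|_{L^2(\hat T)}\lesssim h^{d/2-1}\|\breve{\bv}_h\|_{L^2(\breve T)}\le h^{d/2-1}\|\breve{\bv}_h\|_{H^1(\breve T)}$ (via $|\breve A_{\breve T}^{-1}|\lesssim h^{d-1}$ and $\det D\breve{\varphi}_{\breve T}\approx h^d$) yields the $m=0$ estimate $\|\bv_h\circ\bPhi_h-\breve{\bv}_h\circ\breve{\bPhi}_h\|_{L^2(\tilde T)}\lesssim h^2\|\breve{\bv}_h\|_{H^1(\breve T)}$.

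The $m=1$ estimate then follows from the $m=0$ one by an inverse inequality on the affine element $\tilde T$: the field $\bv_h\circ\bPhi_h-\breve{\bv}_h\circ\breve{\bPhi}_h$ restricted to each sub‑simplex of $\tilde T$ is a rational function with numerator of bounded polynomial degree and denominator bounded below by $\approx h^{2d}$ with controlled derivatives, so $|{\,\cdot\,}|_{H^1(\tilde T)}\lesssim h^{-1}\|{\,\cdot\,}\|_{L^2(\tilde T)}\lesssim h\|\breve{\bv}_h\|_{H^1(\breve T)}$. The main obstacle I anticipate is precisely obtaining the sharp power of $h$: a naïve bound using only $\|D\bPhi_h-D\breve{\bPhi}_h\|_{L^\infty}\lesssim h$ (which is all that a crude inverse‑estimate argument from $\|\bPhi_h-\breve{\bPhi}_h\|_{L^\infty}\lesssim h^2$ gives) loses a factor of $h$ and only delivers $h^{1-m}$; recovering the claimed $h^{2-m}$ requires exploiting the refinement relation between $\Th$ and $\Thm$ and the detailed structure of the Lehrenfeld deformation to show the two corrections $\bPhi_h-\mathrm{id}$ and $\breve{\bPhi}_h-\mathrm{id}$ agree to higher order in $W^{1,\infty}$. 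The remaining, largely routine, work is the bookkeeping of the Piola scalings \eqref{eqn:varphiKBounds}, \eqref{eqn:varphiTBounds}, \eqref{eqn:brevevarphiTBounds}, \eqref{eqn:ABounds} and summing (if needed) over $\tilde T\in\Thm$.
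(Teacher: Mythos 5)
Your construction of $\bv_h$ is exactly the one the paper uses: you prescribe the nodal values so that $A_T\hat\bv_h$ and $\breve A_{\breve T}\hat{\breve\bv}_h$ agree at all Lagrange nodes of $\hat T^{\rm Al}$, i.e.\ $\hat\bv_h$ is the piecewise degree-$k$ interpolant of $w=A_T^{-1}\breve A_{\breve T}\hat{\breve\bv}_h$, and your check that the resulting function is single-valued (hence lies in $\bV_h$) is sound. The problem is the step you yourself flag as the ``crucial analytic input'': you reduce everything to $\|E\|_{W^{k+1,\infty}(\hat T)}\lesssim h^2$ for $E=A_T^{-1}\breve A_{\breve T}-I$, and propose to obtain this from $\|D(\bPhi_h-\breve\bPhi_h)\|_{L^\infty(\tilde T)}\lesssim h^2$, a second-order agreement of the two deformations that you do not prove and that does not follow from the properties recorded in the paper (\eqref{PhiBounds}--\eqref{eqn:detBrevePhi} only give $\|D\bPhi_h-{\bm I}\|_{L^\infty}\lesssim h$ and likewise for $\breve\bPhi_h$, hence only $\|E\|_{L^\infty}\lesssim h$). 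Deferring this to ``the detailed structure of the Lehrenfeld deformation'' leaves the proof incomplete at its crux, and as you note, without it your argument only delivers $h^{1-m}$.

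The point you are missing is that no closeness of $\bPhi_h$ and $\breve\bPhi_h$ is needed: the $\ell=0$ term in the Leibniz expansion of $|E\hat{\breve\bv}_h|_{H^{k+1}(\hat T)}$ — the only place where $\|E\|_{L^\infty}$ would enter — multiplies $|\hat{\breve\bv}_h|_{H^{k+1}(\hat T)}$, which vanishes because $\hat{\breve\bv}_h$ is a piecewise polynomial of degree $\le k$ on $\hat T^{\rm Al}$. Every surviving term therefore carries at least one derivative of $A_T^{-1}$ or of $\breve A_{\breve T}$, and on the reference element each such derivative costs a factor of $h$ by \eqref{eqn:ABounds} ($|A_T^{-1}|_{W^{\ell,\infty}(\hat T)}\lesssim h^{\ell+d-1}$, $|\breve A_{\breve T}|_{W^{\ell,\infty}(\hat T)}\lesssim h^{\ell-d+1}$). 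This is exactly how the paper proceeds: Bramble--Hilbert gives $\|\mathcal I w-w\|_{L^2(\hat T)}\lesssim |A_T^{-1}\breve A_{\breve T}\hat{\breve\bv}_h|_{H^{k+1}(\hat T)}$, the Leibniz expansion plus \eqref{eqn:ABounds} and inverse estimates (to pull $\|\breve\bv_h\|_{H^{k+1-\ell}(\breve T)}$ down to $\|\breve\bv_h\|_{H^1(\breve T)}$) yield $|A_T^{-1}\breve A_{\breve T}\hat{\breve\bv}_h|_{H^{k+1}(\hat T)}\lesssim h^{1+d/2}\|\breve\bv_h\|_{H^1(\breve T)}$, and combined with the prefactor $h^{(2-d)/2}$ this gives precisely $h^2$. (The paper also handles $m=1$ directly via the chain rule and the same Bramble--Hilbert bound rather than by an inverse inequality for the piecewise rational difference; your shortcut there is plausible but would need the rational-function inverse estimate made precise.) So you should replace the smallness-of-$E$ argument by this cancellation-plus-derivative-scaling argument; as written, the proof has a genuine gap.
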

\begin{proof}
Given $\breve{\bv}_h\in \breve{\bV}_h$, we uniquely determine 
$\bv_h\in \bV_h$ such that $\bv_h(a) = \breve{\bv}_h(\breve{a})$
at all Lagrange nodes $a$ in the triangulation 
$\calT_h^{\rm Al}$, where $\breve{a} = (\breve{\bPhi}_h\circ \bPhi^{-1}_h)(a)$.

Fix $\tilde T\in \Thm$ and set $T = \bPhi_h(\tilde T)$
and $\breve{T} = \breve\bPhi_h(\tilde T)$.
Write $\breve{\bv}_h|_{\breve{T}} = (\breve{A}_{\breve T}  \hat{\breve \bv}_h)\circ \breve{\varphi}_T^{-1}$ 
and $\bv_h|_T = (A_T \hat \bv_h)\circ \varphi_T^{-1}$ for some $\hat{\breve \bv}_h\in [\mathbb{P}^k_{\rm c}(\hat T^{\rm Al})]^d$
and $\hat \bv_h\in [\mathbb{P}^k_{dc}(\hat T^{\rm Al})]^d$.
We then set $\tilde \bv_h = \bv_h\circ \bPhi_h:\tilde T\to \bbR^d$
and $\tilde{\breve{\bv}}_h = \breve{\bv}_h\circ \breve{\bPhi}_h:\tilde T\to \bbR^d$,
so that $\tilde \bv_h(\tilde a) = \tilde{\breve{\bv}}_h(\tilde a)$
at all Lagrange points $\tilde a$ in $\Th$,
and 
\begin{align*}
\tilde{\breve{\bv}}_h = (\breve{A}_{\breve T} \hat{\breve{\bv}}_h)\circ {\varphi}_{\tilde T}^{-1},\qquad
\tilde{\bv}_h = ({A}_T \hat{{\bv}}_h)\circ {\varphi}_{\tilde T}^{-1}.
\end{align*}

We have, by a change of variables,
\begin{equation}
\label{eqn:m0Start}
\begin{split}
\|\tilde \bv_h - \tilde{\breve{\bv}}_h\|_{L^2(\tilde T)}^2
&\approx h^d \int_{\hat T} \left|A_T \hat \bv_h - \breve{A}_{\breve T} \hat{\breve{\bv}}_h\right|^2\dif \hat{\xb}.
\end{split}
\end{equation}

We use the identity
\begin{align*}
\breve{A}_{\breve T}(\hat a) \hat{\breve \bv}_h(\hat a) = A_T(\hat a) \hat \bv_h(\hat a)
\qquad \text{at all Lagrange points $\hat a$ in $\hat T^{\rm Al}$},
\end{align*}
to conclude
\begin{align}\label{eqn:hatvhInterp}
\hat \bv_h(\hat a) = A_T^{-1}(\hat a) \breve{A}_{\breve T}(\hat a) \hat{\breve{\bv}}_h(\hat a).
\end{align}
Thus, $\hat \bv_h|_{\hat K_j}$ is the $k$-degree interpolant of  $ A_T^{-1} \breve{A}_{\breve T} \hat{\breve{\bv}}_h$ on each $\hat K\in \hat T^{\rm Al}$. We can then apply the Bramble-Hilbert Lemma 
and $\|A_T\|_{L^\infty(\hat T)}\lesssim h^{1-d}$ (cf.~\eqref{eqn:ABounds}) to obtain
\begin{equation}
\label{eqn:I1Step1}
\begin{split}
\|\tilde \bv_h - \tilde{\breve{\bv}}_h\|_{L^2(\tilde T)}^2&\approx h^d  \int_{\hat T} |A_T|^2 \left| \hat \bv_h - A_T^{-1}\breve{A}_{\breve T} \hat{\breve{\bv}}_h\right|^2\dif \hat{\xb}
\lesssim h^{2-d} |A_T^{-1} \breve{A}_{\breve T} \hat{\breve{\bv}}_h|^2_{H^{k+1}(\hat T)}.
\end{split}
\end{equation}

Next, we apply the product rule, \eqref{eqn:ABounds}, \eqref{eqn:brevevarphiTBounds}, 
and a change of variables to bound
the right-hand side of \eqref{eqn:I1Step1}:
\begin{equation}\label{eqn:RHSI1Step1}
\begin{split}
 |A_T^{-1} \breve{A}_{\breve T} \hat{\breve{\bv}}_h|^2_{H^{k+1}(\hat T)}
&\lesssim  \sum_{\ell=0}^{k+1} |A_T^{-1}|^2_{W^{\ell,\infty}(\hat T)} |\breve{A}_{\breve T} \hat{\breve{\bv}}_h|^2_{H^{k+1-\ell}(\hat T)}\lesssim  h^{2d-2} \sum_{\ell=0}^{k+1} h^{2\ell} |\breve{A}_{\breve T} \hat{\breve{\bv}}_h|^2_{H^{k+1-\ell}(\hat T)}\\
&\lesssim h^{2d-2} |\breve{A}_{\breve T} \hat{\breve{\bv}}_h|^2_{H^{k+1}(\hat T)}+h^{d-2}_T \sum_{\ell=1}^{k+1} h^{2\ell}
\cdot h^{2(k+1-\ell)}  \|\breve{\bv}_h\|^2_{H^{k+1-\ell}( T)}\\
&\lesssim h^{2d-2} |\breve{A}_{\breve T} \hat{\breve{\bv}}_h|^2_{H^{k+1}(\hat T)}+h^{d+2} \|\breve{\bv}_h\|_{H^1(\breve T)}^2,
\end{split}
\end{equation}
where we used an inverse estimate in the last step.
Continuing, we use that 
$\hat{\breve{\bv}}_h$ is a piecewise polynomial of degree $\le k$,
the product rule, \eqref{eqn:ABounds}, and an inverse estimate:
 \begin{align*}
  |\breve{A}_T \hat{\breve{\bv}}_h|_{H^{k+1}(\hat T)}
&  \le \sum_{\ell=0}^{k+1} |\breve{A}_{\breve T}|_{W^{k+1-\ell,\infty}(\hat T)} |\hat{\breve{\bv}}_h|_{H^\ell(\hat T)}
=   \sum_{\ell=0}^{k} |\breve{A}_{\breve T}|_{W^{k+1-\ell,\infty}(\hat T)} |\hat{\breve{\bv}}_h|_{H^\ell(\hat T)}\\
&\lesssim  \sum_{\ell=0}^{k} h^{k+2-\ell-d} |\hat{\breve{\bv}}_h|_{H^\ell(\hat T)}
=  \sum_{\ell=0}^{k} h^{k+2-\ell-d} |\breve{A}_{\breve T}^{-1}\breve{A}_{\breve T}\hat{\breve{\bv}}_h|_{H^\ell(\hat T)}\\
&\lesssim \sum_{\ell=0}^k h^{k+2-\ell-d} \sum_{m=0}^\ell |\breve{A}_T^{-1}|_{W^{\ell-m,\infty}(\hat T)} |\breve{A}_{\breve T}\hat{\breve{\bv}}_h|_{H^m(\hat T)}\\
&\lesssim   h^{k+1-d/2} \sum_{m=0}^k \|\breve{\bv}_h\|_{H^m(\breve T)}\lesssim h^{k+1-d/2} \|\breve{\bv}_h\|_{H^k(\breve T)}\lesssim h^{2-d/2}\|\breve{\bv}_h\|_{H^1(\breve T)}.
  \end{align*}
Applying this estimate to \eqref{eqn:RHSI1Step1} yields
  \begin{align}\label{eqn:HONorm}
  |A_T^{-1} \breve{A}_{\breve T} \hat{\breve{\bv}}_h|^2_{H^{k+1}(\hat T)} \lesssim h^{d+2} \|\breve{\bv}_h\|_{H^1(T)}^2,
  \end{align}
and so by \eqref{eqn:I1Step1},
  \begin{align}\label{eqn:I1Bound}
  \|\tilde \bv_h - \tilde{\breve{\bv}}_h\|_{L^2(\tilde T)}^2\lesssim h^4  \|\breve{\bv}_h\|_{H^1(\breve T)}^2.
  \end{align}
Thus, \eqref{eqn:PertV} holds in the case $m=0$.

Next, we prove \eqref{eqn:PertV} for $m=1$.
We use the chain rule
  \begin{align*}
 \nab  \tilde \bv_h = \hat \nab (A_T \hat \bv_h) \circ \varphi_{\tilde T}^{-1} D\varphi_{\tilde T}^{-1},\quad
 \nab  \tilde{\breve{\bv}}_h = \hat \nab (\breve{A}_{\breve T} \hat{\breve{\bv}}_h) \circ \varphi_{\tilde T}^{-1} D\varphi_{\tilde T}^{-1},
\end{align*}
to conclude
\begin{align*}
|\tilde \bv_h- \tilde{\breve{\bv}}_h|_{H^1(\tilde T)}^2
& \approx h^d \int_{\hat T}\left | \hat \nab (A_T \hat \bv_h) (D\varphi_{\tilde T})^{-1} - 
 \hat \nab (\breve{A}_{\breve T} \hat{\breve{\bv}}_h) \circ \varphi_{\tilde T}^{-1} (D\varphi_{\tilde T})^{-1}\right|^2\dif \hat{\xb}\\
&\lesssim 
h^{d-2} \int_{\hat T}\left | \hat \nab (A_T \hat \bv_h) - \hat \nab (\breve{A}_{\breve T} \hat{\breve{\bv}}_h) \right|^2\dif \hat{\xb}.
 \end{align*}

We then use \eqref{eqn:ABounds} to obtain
 \begin{align*}
|\tilde \bv_h- \tilde{\breve{\bv}}_h|_{H^1(\tilde T)}^2
 &\lesssim h^{d-2}  \int_{\hat T}\left | \hat \nab (A_T \hat \bv_h-\breve{A}_{\breve T} \hat{\breve{\bv}}_h)\right|^2\dif \hat{\xb}\\
&\lesssim h^{d-2}  \int_{\hat T}\left | \hat \nab (A_T (\hat \bv_h-A^{-1}_T \breve{A}_{\breve T} \hat{\breve{\bv}}_h))\right|^2\dif \hat{\xb}\\
&\lesssim h^{d-2} \left(h^{4-2d}_T \int_{\hat T} \left| \hat \bv_h-A^{-1}_T \breve{A}_{\breve T} \hat{\breve{\bv}}_h\right|^2\dif \hat{\xb}
 +h^{2-2d}  \int_{\hat T} \left|\hat \nab( \hat \bv_h-A^{-1}_T \breve{A}_{\breve T} \hat{\breve{\bv}}_h)\right|^2\dif\hat{\xb}
 \right).
 \end{align*}
Recalling \eqref{eqn:hatvhInterp}, we apply the Bramble-Hilbert Lemma 
and \eqref{eqn:HONorm}:
 \begin{align*}
 |\tilde \bv_h- \tilde{\breve{\bv}}_h|_{H^1(\tilde T)}^2 \lesssim h^{-d} |A_T^{-1} \breve{A}_{\breve T} \hat{\breve{\bv}}_h|_{H^{k+1}(\hat T)}^2\lesssim h^2 \|\breve{\bv}_h\|_{H^1(\breve T)}^2.
 \end{align*}
\end{proof}

\begin{lem}\label{lem:qPert}
For every $q_h\in {Q}_h$, there exists
$\breve{q}_h\in \breve{Q}_h$ such that $q_h\circ \bPhi=\breve{q}_h\circ \breve \bPhi$.
Consequently,
\begin{align}\label{eqn:qStable}
\|q_h\|_{\Omega_h^{\calT}}\le C_1 \|\breve q_h\|_{\Omega_h^{\breve \calT}}.
\end{align}
\end{lem}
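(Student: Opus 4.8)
The plan is to construct $\breve q_h$ \emph{element by element}, exploiting that both $Q_h$ and $\breve Q_h$ are fully discontinuous (no inter-element constraints whatsoever) and that on each curved macro element their members are pull-backs of the \emph{same} reference space $\bbP_{\rm dc}^{k-1}(\hat T^{\rm Al})$ under the respective macro parametrizations $\varphi_T$ and $\breve\varphi_{\breve T}$. So no matching, interpolation, or perturbation estimate is needed here — only an exact change of variables through the reference element and a comparison of Jacobians.

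\textbf{Construction of $\breve q_h$ and the identity.} First I would fix $q_h\in Q_h$ and a macro element $\tilde T\in\Thm$, set $T=\bPhi_h(\tilde T)$, $\breve T=\breve{\bPhi}_h(\tilde T)$, and let $\hat q:=q_h|_T\circ\varphi_T\in\bbP_{\rm dc}^{k-1}(\hat T^{\rm Al})$. Since $\varphi_T=\bPhi_h|_{\tilde T}\circ\varphi_{\tilde T}$, this is the identity $q_h|_T\circ\bPhi_h|_{\tilde T}=\hat q\circ\varphi_{\tilde T}^{-1}$ on $\tilde T$. I would then \emph{define} $\breve q_h|_{\breve T}:=\hat q\circ\breve\varphi_{\breve T}^{-1}$ and repeat over all $\tilde T\in\Thm$; since $\breve q_h|_{\breve T}\circ\breve\varphi_{\breve T}=\hat q\in\bbP_{\rm dc}^{k-1}(\hat T^{\rm Al})$ by construction, we have $\breve q_h\in\breve Q_h$. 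Using $\breve\varphi_{\breve T}=\breve{\bPhi}_h|_{\tilde T}\circ\varphi_{\tilde T}$, hence $\breve\varphi_{\breve T}^{-1}\circ\breve{\bPhi}_h|_{\tilde T}=\varphi_{\tilde T}^{-1}$, one obtains $\breve q_h|_{\breve T}\circ\breve{\bPhi}_h|_{\tilde T}=\hat q\circ\varphi_{\tilde T}^{-1}=q_h|_T\circ\bPhi_h|_{\tilde T}$ on $\tilde T$; as $\tilde T$ was arbitrary, $q_h\circ\bPhi_h=\breve q_h\circ\breve{\bPhi}_h$. (Note that the lemma deliberately does not assert $\breve q_h$ has vanishing mean — the mean mismatch between the two geometries is a higher-order effect handled separately in the proof of Theorem~\ref{Th:fitted}.)

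\textbf{Norm comparison.} For the bound I would change variables to $\hat T$: $\|q_h\|_{L^2(T)}^2=\int_{\hat T}|\hat q|^2\,|\det D\varphi_T|$ and $\|\breve q_h\|_{L^2(\breve T)}^2=\int_{\hat T}|\hat q|^2\,|\det D\breve\varphi_{\breve T}|$. The crucial input is that both Jacobian determinants are comparable to $h^d$, uniformly in the macro element and in the mesh--geometry cut configuration — this is exactly \eqref{eqn:varphiTBounds} and \eqref{eqn:brevevarphiTBounds} (which themselves rest on the stability bounds \eqref{eqn:detPhi}, \eqref{eqn:detBrevePhi} for $\bPhi_h$ and $\breve{\bPhi}_h$). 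Hence $\|q_h\|_{L^2(T)}^2\approx h^d\|\hat q\|_{L^2(\hat T)}^2\approx\|\breve q_h\|_{L^2(\breve T)}^2$ with constants independent of $h$; summing over all curved macro elements $T$ — equivalently over $\tilde T\in\Thm$, equivalently over $\breve T\in\breve\calT_h$, since all three are in bijection — yields \eqref{eqn:qStable}.

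\textbf{Main obstacle.} The bulk of this argument is bookkeeping; the only point that genuinely needs care is that the comparison between the two parametrizations of a fixed macro element be quantified uniformly in $h$ and in the cut position, and this is already packaged into \eqref{eqn:varphiTBounds}--\eqref{eqn:brevevarphiTBounds}. This is in sharp contrast with Lemma~\ref{lem:vPert} for the velocity, where the ${\bm H}({\rm div})$-conforming Piola structure together with the nodal single-valuedness constraints forces a genuine first-order perturbation estimate rather than an exact pull-back identification.
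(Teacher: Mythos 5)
Your construction $\breve q_h|_{\breve T}:=q_h|_T\circ\varphi_T\circ\breve\varphi_{\breve T}^{-1}$ and the composition identities are exactly the paper's proof, and your norm comparison via the uniform Jacobian bounds in \eqref{eqn:varphiTBounds}--\eqref{eqn:brevevarphiTBounds} is the same change-of-variables argument the paper invokes through \eqref{PhiBounds} and \eqref{brevePhiBounds}. The proposal is correct and takes essentially the same route.
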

\begin{proof}
Given ${q}_h\in {Q}_h$, set $\breve{q}_h|_{\breve T} = {q}_h|_T\circ {\varphi}_T\circ \breve \varphi_{\breve T}^{-1}$ for each $\breve T\in \breve \calT_h$,
where $T = \bPhi_h \circ \breve\bPhi_h^{-1}(\breve T)$.
The result then follows upon noting that $\bPhi_h = \varphi_T\circ \varphi_{\tilde T}^{-1}$
and $\breve \bPhi_h = \breve \varphi_{\breve T}^{-1} \circ \varphi_{\tilde T}^{-1}$.
The estimate \eqref{eqn:qStable} follows from a change of variables
and \eqref{PhiBounds}, \eqref{brevePhiBounds}.
\end{proof}
\begin{proof}[Proof of Theorem~\ref{Th:fitted}]
Fix $q_h\in Q_h^0$,  let $\breve q_h\in \breve Q_h$ satisfy
$\breve q_h = q_h \circ \bPhi_h \circ \breve \bPhi_h^{-1}$ (cf.~Lemma~\ref{lem:qPert}),
and set $\breve c = \frac1{|\Omega_h^{\breve \calT}|} \int_{\Omega_h^{\breve \calT}} \breve q_h\dif \xb$ so that $(\breve q_h-\breve c)\in \breve Q_h^0$.
A change of variables and the zero-mean property of $q_h$ show 
\[
\breve{c} = \frac{1}{|\Omega_h^{\breve \calT}|} \int_{\Omega_h^{\calT}} q_h (\det(D\breve \bPhi_h \circ \bPhi_h^{-1})\det(D\bPhi^{-1})-1)\dif \xb,
\]
and therefore by \eqref{eqn:detPhi} and \eqref{eqn:detBrevePhi},
we have
\begin{equation}
\label{cBound}
\|\breve c\|_{\Omega_h^{\breve \calT}}\le C_2 h \|q_h\|_{\Omega_h^{\calT}}.
\end{equation}

From Lemma~\ref{lem:BreveStable}, there exists $\breve{\bv}_h\in \breve \bV_h^0$ satisfying
\begin{equation}\label{eqn:InfSupAgain}
\breve{\beta}
\|\breve q_h-\breve c\|_{\Omega_h^{\breve \calT}} \le   \frac{\int_{\Omega_h^{\breve \calT}} ({\rm div}\,\breve \bv_h) (\breve q_h-\breve c)\dif \xb}{\|\breve \bv_h\|_{H^1(\Omega_h^{\breve \calT})}}
= \frac{\int_{\Omega_h^{\breve \calT}} ({\rm div}\,\breve \bv_h) \breve q_h\dif \xb}{\|\breve \bv_h\|_{H^1(\Omega_h^{\breve \calT})}},
\end{equation}
where in the last step, we used that $\breve \bv_h$ vanishes on the boundary
of $\Omega_h^{\breve \calT}$, as well as the $\bH({\rm div})$-conformity
of $\breve\bV_h$ (cf.~Lemma~\ref{lem:BreveStable}).

Let $\bv_h\in \bV_h$ satisfy \eqref{eqn:PertV}, 
and set $\tilde \bv_h = \bv_h\circ \bPhi_h$, $\tilde{\breve{\bv}}_h = \breve \bv_h\circ \breve \bPhi_h$,
and $\tilde q_h = q_h \circ \bPhi_h = \breve q_h\circ \breve \bPhi_h$. 
By a change of variables, \eqref{eqn:detPhi} and \eqref{eqn:detBrevePhi}, we have
\begin{align*}
&\int_{\Omega_h^{\calT}} ({\rm div}\,\bv_h) q_h\dif \xb - \int_{\Omega_h^{\breve \calT}} ({\rm div}\,\breve \bv_h) \breve q_h\dif \xb\\
&\qquad = \int_{\Omega_h^{\tilde \calT}} {\rm tr}(\nab \tilde \bv_h (D\bPhi_h)^{-1})  \tilde q_h
\det(D\bPhi_h)\dif \xb-\int_{\Omega_h^{\tilde \calT}} {\rm tr}(\nab \tilde {\breve \bv}_h(D\breve{\bPhi}_h)^{-1}) \tilde q_h \det(D\bPhi_h)\dif \xb\\
&\qquad \le \int_{\Omega_h^{\tilde \calT}} 
\left({\rm tr}(\nab \tilde \bv_h (D\bPhi_h)^{-1}) -{\rm tr}(\nab \tilde {\breve \bv}_h(D\breve{\bPhi}_h)^{-1})\right)\tilde q_h\dif \xb
+ h(\|\tilde \bv_h\|_{H^1_h(\Omega_h^{\tilde \calT})} +\|\tilde{\breve{\bv}}_h\|_{H^1_h(\Omega_h^{\tilde \calT})})\|\tilde q_h\|_{\Omega_h^{\tilde \calT}}\\
&\qquad \lesssim \int_{\Omega_h^{\tilde \calT}} \left({\rm div}\,\tilde \bv_h - {\rm div}\,\tilde{\breve{\bv}}_h\right) \tilde q_h\dif \xb
+ h(\|\tilde \bv_h\|_{H^1(\Omega_h^{\tilde \calT})} +\|\tilde{\breve{\bv}}_h\|_{H^1(\Omega_h^{\tilde \calT})})\|\tilde q_h\|_{\Omega_h^{\tilde \calT}}.
\end{align*}
Thus by \eqref{eqn:PertV} and \eqref{PhiBounds}, there exists $C_3>0$ such that
\begin{align}\label{diffABC}
\int_{\Omega_h^{\calT}} ({\rm div}\,\bv_h) q_h\dif \xb - \int_{\Omega_h^{\breve \calT}} ({\rm div}\,\breve \bv_h) \breve q_h \dif \xb
\le C_3 h\|\breve{\bv}_h\|_{H^1_h(\Omega_h^{\breve\calT})} \|q_h\|_{\Omega_h^{\calT}}.
\end{align}

Using the estimates \eqref{PhiBounds} and \eqref{brevePhiBounds}
we easily conclude $\|\breve \bv_h\|_{H^1(\Omega_h^{\breve \calT})}\approx \|\bv_h\|_{H^1(\Omega^{\calT}_h)}$.
Consequently, it follows from \eqref{eqn:InfSupAgain} and \eqref{diffABC}
that
\begin{align}\label{eqn:asdf}
\breve{\beta}
\|\breve q_h-\breve c\|_{\Omega_h^{\breve \calT}} 
&\le 
\frac{\int_{\Omega_h^{ \calT}} ({\rm div}\, \bv_h) q_h\dif \xb}{\|\breve \bv_h\|_{H^1(\Omega_h^{\breve \calT})}}
+C_3 h \|q_h\|_{\Omega_h^{\calT}}
\le C_4 \frac{\int_{\Omega_h^{ \calT}} ({\rm div}\, \bv_h) q_h\dif \xb}{\| \bv_h\|_{H^1(\Omega_h^{ \calT})}}
+C_3 h \|q_h\|_{\Omega_h^{\calT}}
\end{align}
for a constant $C_4>0$ independent of the mesh.

Finally, we conclude from \eqref{eqn:qStable}, \eqref{cBound}, and \eqref{eqn:asdf} that
\begin{align*}
\|q_h\|_{\Omega_h^{\calT}}
&\le C_1 \|\breve q_h\|_{\Omega_h^{\breve \calT}}\\
&\le C_1 (\|\breve q_h-\breve c\|_{\Omega_h^{\breve \calT}} + C_2 h\|q_h\|_{\Omega_h^{\calT}})\\
&\le C_1 C_4 \breve \beta^{-1} \frac{\int_{\Omega_h^{ \calT}} ({\rm div}\, \bv_h) q_h\dif \xb}{\| \bv\|_{H^1(\Omega_h^{ \calT})}}+ h(C_1 C_3 \breve \beta^{-1} +C_1C_2) \|q_h\|_{\Omega_h^{\calT}}.
\end{align*}
Thus, with $\beta = \breve \beta \left(1-h(C_1 C_3 \breve \beta^{-1} +C_1C_2)\right)C_1^{-1} C_4^{-1}$, the desired result holds for $h$ sufficiently small.

\end{proof}

\section{Proof of \texorpdfstring{\eqref{eqn:ABounds}}{}}\label{app-ABounds}
\begin{proof}
We prove the bounds for $A_T$, 
as the results for $\tilde A_{\tilde T}$ are identical.
We use the chain rule identity (cf.~\cite{Bernardi89})
\begin{align}\label{chainRule}
D^m (f_1\circ f_2) = \sum_{j=1}^m (D^j f_1)\circ f_2 \sum_{\alpha\in E(m,j)} c_\alpha \prod_{i=1}^m (D^i f_2)^{\alpha_i},
\end{align}
where $E(m,j) = \{\alpha\in \mathbb{N}_0^m:\ |\alpha| = j\text{ and } \sum_{j=1}^m j \alpha_j = m\}$,
and $c_\alpha\in \mathbb{R}$.

Note that $A_T^{-1} = {\rm adj}(D\varphi_T)$, the adjugate matrix
of $D\varphi_T$. Therefore, the entries of $A_T^{-1}$ are polynomials
of degree $(d-1)$ in $D\varphi_T$. We take $f_1 = x^{d-1}$ and $f_2$
to be the derivatives of $\varphi_T$ 
in \eqref{chainRule} to conclude
\begin{align*}
|A_T^{-1}|_{W^{m,\infty}(\hat T)}
&\lesssim \sum_{j=1}^{d-1} |\varphi_T|_{W^{1,\infty}(\hat T)}^{d-1-j} \sum_{\alpha\in E(m,j)} \prod_{i=1}^m |\varphi_T|_{W^{i+1,\infty}(\hat T)}^{\alpha_i}\\
&\lesssim \sum_{j=1}^{d-1} h^{d-1-j}  \sum_{\alpha\in E(m,j)} \prod_{i=1}^m h^{(i+1)\alpha_i}
\lesssim \sum_{j=1}^{d-1} h^{d-1-j}\cdot h^{m+j}\lesssim h^{d+m-1}.
\end{align*}

Next, a simple argument shows 
\begin{equation}
    \label{DdetvarphiT}
|\det(D\varphi_T)|_{W^{m,\infty}(\hat T)}\lesssim h^{d+m},
\end{equation}
and so
by \eqref{chainRule}
\begin{align*}
 \left|\frac1{\det(D\varphi_T)}\right|_{W^{m,\infty}(\hat T)}
&\lesssim \sum_{j=1}^m \|\det(D\varphi_T)\|_{L^\infty(\hat T)}^{-j-1} \sum_{\alpha\in E(m,j)} \prod_{i=1}^m |\det(D\varphi_T)|^{\alpha_i}_{W^{i,\infty}(\hat T)}\\
&\lesssim  \sum_{j=1}^m h^{-d(j+1)}  \sum_{\alpha\in E(m,j)} \prod_{i=1}^m h^{(d+i)\alpha_i}
\lesssim  \sum_{j=1}^m h^{-d(j+1)}\cdot h^{d j+m}\lesssim h^{m-d}.
\end{align*}
Finally, we apply the product rule to conclude
\begin{align*}
|A_T|_{W^{m,\infty}(\hat T)}
&\lesssim \sum_{\ell=0}^m  \left|\frac1{\det(D\varphi_T)}\right|_{W^{\ell,\infty}(\hat T)} |D\varphi_T|_{W^{m-\ell,\infty}(\hat T)}
\lesssim \sum_{\ell=0}^m  h^{\ell-d} \cdot h^{m-\ell+1}\lesssim h^{m-d+1}.
\end{align*}
\end{proof}

\section{Proof of Lemma~\ref{lem:Interp}}
\begin{proof}
From \cite[Lemma 4.4]{DN24}, there exists
an interpolant $I^1_V:{\bm C}(\Omega_h^{\calT})\to \bV_h$ such that ($s=0,1$)
\begin{equation}\label{eqn:InterpEstimate}
 \|\bv -I^1_V\bv\|_{H^s(K)}\lesssim h^{k+1-s}\|\bv\|_{H^{k+1}(K)}\qquad \forall \bv\in \bH^{k+1}(K)\ \forall K\in \mathcal{T}^{\rm Al}_h. 
 \end{equation}

To construct $I_V \bv$, we map
$I^1_V \bv$ to the affine triangulation, enrich
this function with facet bubble functions,
and then map back to the computational triangulation.
To explain this procedure in further detail, 
we use the notation $\bv_h^1 = I^1_V \bv$ and set
$\tilde \bv_h^1 = \left((\det(D\Phi_h^{-1}))^{-1} D\Phi_h^{-1} \bv^1_h\right)\circ \bPhi_h$,
the Piola transform of $\bv_h^1$ with respect to $\bPhi_h^{-1}$.
We then see that $\tilde \bv^1_h\in {\bm H}({\rm div}, \Omega_h^{\tilde \calT})$ 
and is a piecewise polynomial
of degree $\le k$ with respect to $\Th$.
Likewise, we set $\tilde \bv = \left((\det(D\Phi_h^{-1}))^{-1} D\Phi_h^{-1} \bv\right)\circ \bPhi_h\in \bH({\rm div}, \Omega_h^{\tilde \calT})$.

For each $K\in \calT_h^{\rm Al}$ with ${\rm meas}_{d-1}(\p K\cap \p \Omega_h^{\calT})>0$,
let $\{F^i_K\}_{i=1}^{M_K}$ denote the set of facets of $K$ that
lie on the boundary $\p\Omega_h^{\calT}$ ($M_K\in \mathbb{N}$).
Set $\tilde K = \Phi_h^{-1}(K)\in \Th$, $\tilde F^i_K = \bPhi_h^{-1}(F^i_K)$, and let $\tilde b^K_i\in \mathbb{P}^d(\tilde K)$ denote 
the bubble function associated with $\tilde F^i_K$ that takes
the value $1$ at the barycenter of $\tilde F_K^i$ and vanishes
on $\p \tilde K\backslash \tilde F_K^i$. We also let $\tilde \bn_{F_i} = \tilde \bn_h|_{\tilde F_i}\in \bbR^d$, where $\tilde \bn_h$ denotes the outward unit normal
of $\p  \Omega_h^{\tilde \calT}$. 

We then set for each boundary element $K$,
\[
\tilde \bv_h|_{\tilde K} := \tilde \bv^1_h|_{\tilde K}+\sum_{i=1}^{M_K} \alpha_i \tilde b_i^K \tilde \bn_{F_i},\quad \text{where}\quad \alpha_i = \left(\int_{\tilde F_i} \tilde b_i^K\right)^{-1}\int_{\tilde F^i_K} (\tilde \bv -\tilde \bv^1_h)\cdot \tilde \bn_{F_i},
\]
and $\tilde \bv_h|_{\tilde K} = \tilde \bv^1_h|_{\tilde K}$
for $\tilde K\in\Th$
with ${\rm meas}_{d-1}(\p \tilde K\cap \p  \Omega_h^{\tilde \calT})=0$.
Finally, we set
\[
I_V\bv :=\left((\det(D\bPhi_h))^{-1} D\bPhi_h \tilde \bv_h\right)\circ \bPhi_h^{-1}.
\]

Because the facet bubbles vanish at interior degrees of freedom
and degrees of freedom that lie on $(d-2)$-dimensional simplices,
we easily conclude $I_V \bv\in \bV_h$. Moreover, by 
the normal-preserving properties of the Piola transform, we have
\begin{align*}
  \int_{F_K^i}        \! (I_V \bv)\cdot \bn_h\dif s
= \int_{\tilde F_K^i} \! \tilde \bv_h\cdot \tilde \bn_h\dif s
= \int_{\tilde F_K^i} \! \tilde \bv_h^1\cdot\tilde \bn_h\dif s + \alpha_i \int_{\tilde F_K^i}\! \tilde b_i^K\dif s
= \int_{\tilde F_i^K} \! \tilde \bv\cdot \tilde \bn_h\dif s
= \int_{F_K^i}        \! \bv\cdot \bn_h\dif s.
\end{align*}
Thus, 
\[
\int_{\p \Omega_h^{\calT}} (I_V \bv)\cdot \bn_h\dif s = \int_{\p \Omega_h^{\calT}} \bv\cdot \bn_h\dif s.
\]
Next, we note that
\begin{align*}
|\alpha_i| 
&\lesssim  |\tilde F_K^i|^{-\frac12}(h^{\frac12}|\tilde \bv - \tilde \bv^1_h|_{H^1(\tilde K)}+h^{-\frac12}\|\tilde \bv-\tilde \bv^1_h\|_{L^2(\tilde K)})\\
&\lesssim h^{-\frac{d-1}2}
(h^{\frac12}\| \bv -  \bv^1_h\|_{H^1( K)}+h^{-\frac12}\| \bv- \bv^1_h\|_{L^2(K)})
\lesssim h^{k+1-\frac{d}{2}}\|\bv\|_{H^{k+1}(K)}.
\end{align*}
Thus,
\begin{align*}
\|\bv - I_V\bv \|_{H^s(K)}
&\lesssim \|\tilde \bv - \tilde\bv_h \|_{H^s(\tilde K)}
\lesssim \| \bv - \bv^1_h \|_{H^s(K)}+h^{\frac{d}{2}-s}\sum_{i=1}^{M_K} |\alpha_i|\lesssim h^{k+1-s} \|\bv\|_{H^{k+1}(K)}.
\end{align*}

\end{proof}

\section{Proof of Lemma~\ref{LAux}}

Recall that  $\tilde J_K=\det(D\restr{\bPhi_h}{\tilde K})$. Equation \eqref{eqn:detPhi}
implies that 
\begin{equation}\label{Jbound}
0<c\le \mathcal{E}^{\mathbb{P}}  \tilde J_K \le C \quad \text{on}~\omega(\tilde K),~~\forall\,\tilde K\in \Th,\ K = \bPhi_h(\tilde K),    
\end{equation}
with constants $c,\,C$  independent of $h$ and $\tilde K$.
Here $\omega(\tilde K)$ is a set of simplexes touching $\tilde K$,
$\mathcal{E}^{\mathbb{P}}:\mathbb{P}^{m}\to \mathbb{P}^{m}$ is the canonical
global extension operator of polynomials to $\mathbb{R}^d$, and we recall $\varphi_{\tilde K}:\hat T\to \tilde K$ is an
affine mapping.
The  pointwise bounds in \eqref{Jbound} imply that for any $\tilde K\in \Th$ and $f\in\PP^{2k-1}(\tilde K)$ it holds
\begin{equation}\label{Extbound}
\begin{split}
\|\mathcal{E}^\mathcal{R}(\tilde J_K^{-1}  f) \|_{\omega(\tilde K)}
&\lesssim 
\|\tilde J_K^{-1} f \|_{\tilde K},\\
\|\nab\mathcal{E}^\mathcal{R}(\tilde J_K^{-1}  f) \|_{\omega(\tilde K)}
&\lesssim 
\|\nab(\tilde J_K^{-1} f) \|_{\tilde K}.    
\end{split}
\end{equation}
In the the second inequality, we also used $\tilde J_K\in \PP^{d(k-1)}(\tilde K)$.

Estimate \eqref{aux702} follows from the bounds  \eqref{PhiBounds}--\eqref{eqn:detPhi}, the triangle and inverse inequalities, and \eqref{Extbound}:
\begin{equation*}
\begin{split}
     \|\nab \vh\|_{K_1}^2\lesssim
     \|\nab \vh\circ\bPhi_h\|_{\tilde K_1}^2&
     \lesssim \|\nab \mathcal{E}^\mathcal{R} \restr{\vh\circ\bPhi_h}{\tilde K_2}
     \|_{\tilde K_1}^2+
     \|\nab (\vh\circ\bPhi_h -\mathcal{E}^\mathcal{R} \restr{\vh\circ\bPhi_h}{\tilde K_2}) 
     \|_{\tilde K_1}^2  \\ 
     &\lesssim
     \|\nab \vh\circ\bPhi_h
     \|_{\tilde K_2}^2+
     \|\nabla(\vh -\mathcal{E}^\mathcal{R} (\vh\circ\bPhi_h|_{\tilde K_2}) \circ\bPhi_h^{-1})
     \|_{K_1}^2\\ 
&\lesssim
     \|\nab \vh
     \|_{K_2}^2+
     h^{-2}\|\jump{\vh}_\omega 
     \|_{\omega_F}^2.
\end{split}
\end{equation*}

Consider $\vb\in H^{k+1}(\Omega_h^\calT)$ and $F\in \Fhogp$.
Denote by $B(F)$ a minimal ball such that $\omega_F\subset B(F)$, so that $\text{diam}(B)\le Ch$ with some $C>0$ independent of $F$ and $h$, and thus
\[
\|\vb- \Pi_B\vb\|_{\omega_F}\le \|\vb- \Pi_B\vb\|_{B(F)}\lesssim h^{k+1}\|\bv\|_{H^{k+1}(B(F))},
\]
where $\Pi_B$ is the $L^2$-projection onto $\mathbb{P}^{k}(B(F))$.
We use this and \eqref{Extbound} to estimate for $\omega_F=K_1\cup K_2$: 
\begin{equation}\label{aux755}
\begin{split}
     \|\vb_p-\mathcal{E}^\mathcal{R}&( \restr{\vb_p\circ\bPhi_h}{\tilde K_2})\circ\bPhi_h^{-1}\|_{K_1}\le 
    \|\vb_p-\Pi_B\vb\|_{K_1} +\|\Pi_B\vb-\mathcal{E}^\mathcal{R} (\restr{\vb_p\circ\bPhi_h}{\tilde K_2})\circ\bPhi_h^{-1}\|_{K_1}\\
    &\lesssim 
    \|\vb_p-\Pi_B\vb\|_{K_1} +\|\Pi_B\vb\circ\bPhi_h-\mathcal{E}^\mathcal{R} (\restr{\vb_p\circ\bPhi_h}{\tilde K_2})\|_{\tilde K_1} \\
&\lesssim 
    \|\vb_p-\Pi_B\vb\|_{K_1} +\|\Pi_B\vb\circ\bPhi_h-\vb_p\circ\bPhi_h\|_{\tilde K_2} \lesssim 
    \|\vb_p-\Pi_B\vb\|_{\omega_K}\\ 
    &\lesssim 
    \|\vb_p-\bv\|_{\omega_K}+\|\bv-\Pi_B\vb\|_{\omega_K}\\ 
    & \lesssim \|\vb_p-\bv\|_{\omega_K}+ h^{k+1}\|\bv\|_{H^{k+1}(B(F))}.   
\end{split}
\end{equation}
For the  first norm on the right-hand side, we have
\begin{equation}
\label{aux770}
\begin{split}
\|\vb-\bv_p\|_{K_i}&\lesssim
\|\vb\circ\bPhi_h - \tilde J_{K_i} ^{-1} \Pi_{\tilde K_i}(\tilde J_{K_i} \vb\circ\bPhi_h)\|_{\tilde K_i}\\
&\lesssim
\|\tilde J_K  \vb\circ\bPhi_h - \Pi_{\tilde K_i}(\tilde J_{K_i}  \vb\circ\bPhi_h)\|_{\tilde K_i}\\
&\lesssim h^{k+1} |\tilde J_{K_i}  \vb\circ\bPhi_h|_{H^{k+1}(\tilde K_i)}\\
&\lesssim h^{k+1} \sum_{\ell=0}^{k+1}  |\tilde J_{K_i} |_{W^{\ell,\infty}(\tilde K_i)} |\vb\circ\bPhi_h|_{H^{k+1-\ell}(\tilde K_i)}\\
&\lesssim h^{k+1-d} \|\bv\|_{H^{k+1}(K_i)} \sum_{\ell=0}^{k+1} h^{-\ell} |\det(D\varphi_{K_i})|_{W^{\ell,\infty}(\hat  T)}\lesssim h^{k+1} \|\bv\|_{H^{k+1}(K_i)},
\end{split}
\end{equation}
where we used $\tilde J_{K_i} = c h^{-d} \det(D\varphi_{K_i})\circ \varphi_{\tilde K_i}^{-1}$ and \eqref{DdetvarphiT} in the last two inequalities.
Now \eqref{aux705} follows from substituting \eqref{aux770} in \eqref{aux755}, scaling by $h^{-2}$, summing up over all $F\in \Fhogp$ and using the finite overlap of $B(F)$.

The third estimate, equation \eqref{aux707}, follows from the straightforward application of the triangle inequality, stability of the polynomial extension and projection, and the interpolation property: Let $\eb=\vb-\pi_V\vb$
\[
\begin{split}
i_h(\eb,\eb)& = 
\sum_{F\in \Fhogp}h^{-2}\|\eb_p-\mathcal{E}^\mathcal{R}( \restr{\eb_p\circ\bPhi_h}{\tilde K_2})\circ\bPhi_h^{-1}\|_{\omega_F}^2
\lesssim
\sum_{K\in \Thogp}h^{-2}\|\eb_p\|_{K}^2\\
&=\sum_{K\in \Thogp}h^{-2}\|\tilde J_K^{-1}\Pi_{\tilde K}(\tilde J_K\eb\circ\bPhi_h)\circ\bPhi_h^{-1}\|_{K}^2
\lesssim \sum_{K\in \Thogp}h^{-2}\|\eb\|_{K}^2.
\end{split}
\]
To estimate the right hand side, we use the definition of  $\pi_V\vb$ from Lemma~\ref{lem:Fortin}, i.e. $\pi_V\vb=I_V\vb+\bw_h$ and split 
\[
\sum_{K\in \Thogp}h^{-2}\|\eb\|_{K}^2\lesssim \sum_{K\in \Thogp}h^{-2}\|\vb-I_V\vb\|_{K}^2+ \sum_{K\in \Thogp}h^{-2}\|\bw_h\|_{K}^2.
\]
The estimate 
$\|\vb-I_V\vb\|_{\OT}\lesssim
h^{2k+2} \|\vb\|^2_{H^{k+1}(\Omega_h^\calT)}$
comes from Lemma~\ref{lem:Interp}. For $\bw_h$, we use $\bw_h=0$ on $\partial\OT$ and the Poincare inequality for the $O(h)$-narrow band of simplexes from $\Thogp$:
\[
\sum_{K\in \Thogp}h^{-2}\|\bw_h\|_{K}^2\lesssim
\|\nabla\bw_h\|_{\OT}^2 \lesssim \frac1{\beta}\tnrma{\bv-I_V \bv}^2\lesssim
h^{2k} \|\vb\|^2_{H^{k+1}(\Omega_h^\calT)}.
\]

\end{document}